\documentclass[12pt,a4paper]{amsart}

\usepackage{tikz-cd} 
\tikzcdset{row sep/normal=3.6em, column
	sep/normal=3.6em}

\usepackage{mathrsfs} 
\usepackage{enumitem} 
\setlist{leftmargin=*}
\usepackage{caption}

\usepackage{stmaryrd}
\usepackage{showlabels}

\newtheorem{theorem}{Theorem}[section]
\newtheorem{proposition}[theorem]{Proposition}
\newtheorem{lemma}[theorem]{Lemma}

\theoremstyle{definition}
\newtheorem{assumption}[theorem]{Assumption}
\newtheorem{construction}[theorem]{Construction}
\newtheorem{convention}[theorem]{Convention}
\newtheorem{corollary}[theorem]{Corollary}
\newtheorem{definition}[theorem]{Definition}
\newtheorem{example}[theorem]{Example}
\newtheorem{examples}[theorem]{Examples}

\newtheorem{notation}[theorem]{Notation}
\newtheorem{openproblem}[theorem]{Open Problem}
\newtheorem{remark}[theorem]{Remark}


\usepackage{newmacros}

\begin{document}

\title[Quantitative and Continuous Algebras]{Varieties of Quantitative or Continuous Algebras (Extended Abstract)}
\author{J.~Adámek}
\thanks{J.~Adámek and M.~Dostál acknowledge the support by
	the Grant Agency of the Czech Republic under the grant 22-02964S}
\address{Department of Mathematics, Faculty of Electrical Engineering, Czech Technical University in Prague, Czech Republic and Institute for Theoretical Computer Science, Technical University Braunschweig, Germany}
\email{j.adamek@tu-bs.de}
\author{M.~Dostál}
\author{J.~Velebil}
\address{Department of Mathematics, Faculty of Electrical Engineering, Czech Technical University in Prague, Czech Republic}
\email{$\{$dostamat,velebil$\}$@fel.cvut.cz}

\begin{abstract}
	Quantitative algebras are algebras enriched in the category $\Met$ of metric spaces so that all operations are nonexpanding.
	Mardare, Plotkin and Panangaden introduced varieties (aka $1$-basic varieties) as classes of quantitative algebras presented by quantitative equations.
	We prove that they bijectively correspond to strongly finitary monads $T$ on $\Met$.
	This means that $T$ is the Kan extension of its restriction to finite discrete spaces.
	An analogous result holds in the category $\CMet$ of complete metric spaces.
	
	Analogously, continuous algebras are algebras enriched in $\CPO$, the category of $\omega$-cpos, so that all operations are continuous.
	We introduce equations between extended terms, and prove that varieties (classes presented by such equations) correspond bijectively to strongly finitary monads $T$ on $\CPO$.
	This means that $T$ is the Kan extension of its restriction to finite discrete cpos.
	(The two results have substantially different proofs.)
	An analogous result is also presented for monads on $\DCPO$.
	
	We also characterize strong finitarity in all the categories above by preservations of certain weighted colimits.
	As a byproduct we prove that directed colimits commute with finite products in all cartesian closed categories.
\end{abstract}

\maketitle

\section{Introduction}

Quantitative algebraic reasoning was formalized in a series of articles of Mardare, Panangaden and Plotkin \cite{bmpp18,mpp16,mpp17,bmpp21} as a tool for studying computational effects in probabilistic computation.
Those papers work with algebras in the category $\Met$ of metric spaces or $\CMet$ of complete metric spaces.
Metrics are always understood to be extended: the distance $\infty$ is allowed; morphisms are the nonexpanding maps $f$ which means that for $x,y$ in the domain one has $d(x,y) \leq d(f(x),f(y))$.
\emph{Quantitative algebras} are algebras acting on a (complete) metric space $A$ so that every $n$-ary operation is a nonexpanding map from $A^n$ (with the maximum metric) to $A$.
Mardare et al.\ introduced quantitative equations, which are formal expressions $t =_\eps t'$ where $t$ and $t'$ are terms and $\eps \geq 0$ is a rational number.
A quantitative algebra $A$ satisfies this equation iff for every interpretation of the variables the elements of $A$ corresponding to $t$ and $t'$ have distance at most $\eps$.
A \emph{variety} (called $1$-basic variety in~\cite{mpp16}) is a class of quantitative algebras presented by a set of quantitative equations.
Classical varieties of (non-structured) algebras are well known to correspond bijectively to \emph{finitary} monads $\Tmon = (T,\mu,\eta)$ on $\Set$, i.e.\ $T$ preserves directed colimits: every variety is isomorphic to the category $\Set^\Tmon$ of algebras for $\Tmon$, and vice versa.
The question whether an analogous correspondence holds for quantitative algebras has been posed in two presentations of LICS 22, see~\cite{adamek:varieties-quantitative-algebras} and~\cite{mv}.
We answer this by working with enriched (i.e.\ locally nonexpanding) monads on the category $\Met$ of metric spaces, and introducing weighted colimits called \emph{precongruences}.
We prove that varieties of quantitative algebras bijectively correspond to categories $\Met^\Tmon$ for \emph{strongly finitary monads} $\Tmon$ on $\Met$. And we characterize these monads as precisely those that preserve directed colimits and colimits of precongruences (Theorem~\ref{T:main-c}).
Analogously for strongly finitary monads on the category $\CMet$ (Theorem~\ref{T:main-cc}).

We also study closely related \emph{continuous algebras} which are algebras acting on a cpo (a poset with joins of $\omega$-chains) so that their operations are continuous.
Here we use equations $t = t'$ between \emph{extended terms} which allow not only the formation of composite terms $t = \sigma(t_0,\dots,t_{n-1})$ for $n$-ary operations $\sigma$, but also the formation of formal joins $t = \bigvee_{k \in \Nat} t_k$ for countable collections of terms.
A \emph{variety} of continuous algebras is a class presented by a set of such equations.
We again work with enriched (i.e.\ locally continuous) monads.
We prove that varieties of continuous algebras bijectively correspond to categories $\CPO^\Tmon$ for \emph{strongly finitary monads} on $\CPO$.
And we characterize these monads as precisely those that preserve directed colimits and reflexive coinserters (Theorem~\ref{T:main-cpo}).
The proof substantially uses that (unlike $\Met$ and $\CMet$) the category $\CPO$ is cartesian closed.
We prove that in every cartesian closed category directed colimits commute with finite products (Theorem~\ref{T:ccc}). 

\textbf{Related Work}
The main tool of Mardare et at.\ (\cite{mpp16,mpp17}) are \emph{$\omega$-basic equations}: for a finite metric space $M$ on the set of variebles of terms $t$ and $t'$ one wries $M \vdash t =_\eps t'$.
An algebra $A$ satisfies this equation if every nonexpanding interpretation $f : M \to A$ of the variebles the elements corresponding to $t$ and $t'$ have distance at most $\eps$.
A class of quantitative algebras presented by such equations is called an $\omega$-variety.
Unfortunately, the free-algebra monad of an $\omega$-variety need not be finitary (\cite{adamek:varieties-quantitative-algebras}, Example~4.1).
However, when the category $\Met$ is substituted by its full subcategory $\UMet$ of ultrametric spaces, then $\omega$-varieties were proved in~\cite{adamek:varieties-quantitative-algebras} to correspond bijectively to enriched monads preserving directed colimits of split monomorphisms and surjective morphisms.

Full proofs of the results presented in this extended abstract can be found in~\cite{adv:quantitative-algebras} and~\cite{adv:continuous-algebras}.

\section{Strongly Finitary Monads}

\begin{assumption}            
	Throughout our paper we work with categories and functors enriched over a symmetric monoidal closed category $(\V,\tensor,I)$.
	We recall these concepts shortly and introduce strongly finitary monads on $\V$, giving a characterization of them via preservation of certain colimits.
	Our leading examples of $\V$ are (complete) metric spaces and (complete) partially ordered sets.
\end{assumption}

\begin{definition}[\cite{borceux:second}, 6.12]
	A symmetric monoidal category is given by a category $\V$, a bifunctor $\tensor: \V \times \V \to \V$ and an object $I$.
	Moreover, natural isomorphisms are given expressing that $\tensor$ is commutative and associative, and has the unit $I$ (all up to coherent natural isomorphism).
	Finally, for every object $Y$ a right adjoint of the functor $\blank \tensor Y : \V \to \V$ is given.
	We denote it by $[Y,\blank]$ and denote the morphism corresponding to $f : X \tensor Y \to Z$ by $\wh{f} : Y \to [X,Z]$.
\end{definition}

Often $\tensor$ is the categorical product and $I$ the terminal object; then $\V$ is called cartesian closed.

\begin{examples}
	\label{E:smc}
	\phantom{phantom}
	\begin{enumerate}
		
		\item $\V = \Pos$, the category of posets, is cartesian closed, $[X,Y]$ is the poset of all monotone maps $f : X \to Y$ ordered pointwise.
		Here $\wh{f} = \curry f$ is the curried form of $f$.
		
		\item $\V = \CPO$, the category of cpos (more precisely: $\omega$-cpos) which are posets with joins of $\omega$-chains.
		Morphisms are the continuous maps: monotone maps preserving joins of $\omega$-chains.
		It is also cartesian closed, $[X,Y]$ is the cpo of all continuous maps (ordered again pointwise).
		Analogously $\V = \DCPO$ is the category of posets with directed joins (dcpos) where morphisms (also called continuous maps) preserve directed joins.
		
		\item $\V = \Met$, the category of (extended) metric spaces and nonexpanding maps.
		Objects are metric spaces defined as usual, except that the distance $\infty$ is allowed.
		Nonexpanding maps are those maps $f : X \to Y$ with $d(x,x') \geq d(f(x),f(x'))$ for all $x,x' \in X$.
		A product of metric spaces $A_i$ ($i \in I$) is the metric space on $\prod_{i \in I} A_i$ with the \emph{supremum metric}
		\[
		d((x_i),(y_i)) = \sup_{i \in I} d(x_i, y_i).
		\]
		This category is not cartesian closed: curryfication is not bijective.
		However, $\Met$ is symmetric closed monoidal w.r.t.\ the \emph{tensor product} $X \tensor Y$ which is the cartesian product with the \emph{addition metric}
		\[
		d((x,y), (x',y')) = d(x,x') + d(y,y').
		\]
		Here $[X,Y]$ is the metric space $\Met(X,Y)$ of all morphisms $f : X \to Y$ with the supremum metric: the distance of $f,g : X \to Y$ is $\sup_{x \in X} d(f(x),g(x))$.
		And $I$ is the singleton space.
		
		\item The category $\CMet$ of complete metric spaces is the full subcategory of $\Met$ on spaces with limits of all Cauchy sequences.
		It has the same symmetric closed monoidal structure as above: if $X$ and $Y$ are complete spaces, then so are $X \tensor Y$ and $[X,Y]$.
		
	\end{enumerate}
\end{examples}

\begin{notation}
	\label{N:K}
	\phantom{phantom}
	\begin{enumerate}
		
		\item Every set $X$ is considered as a \emph{discrete cpo} with $x \sqleq x'$ iff $x = x'$.
		This is the coproduct $\coprod_X I$ in $\CPO$ (and also in $\DCPO$).
		Analogously, $X$ is considered as a \emph{discrete metric space}: all distances of $x \neq x'$ are $\infty$.
		This is the coproduct $\coprod_X I$ in $\Met$ (and also in $\CMet$).
		
		\item For the category $\Set_\fin$ of finite sets and mappings we define a functor
		\[
		K : \Set_\fin \to \V, \qquad X \mapsto \coprod_X I.
		\]
		Thus for $\V = \Met$, $\CMet$, $\CPO$ or $\DCPO$ it assigns to every finite set the corresponding discrete metric space or discrete cpo, respectively.
		
	\end{enumerate}
\end{notation}

\begin{convention}
	By a \emph{catgory} $\C$ we always mean a category enriched over $\V$.
	It is given by
	\begin{enumerate}
		
		\item a class $\ob \C$ of objects,
		
		\item an object $\C(X,Y)$ of $\V$ (called the hom-object) for every pair $X,Y$ in $\ob \C$, 
		
		\item a 'unit' morphism $u_X : I \to \C(X,X)$ in $\V$ for every object $X \in \ob \C$, and
		
		\item 'composition' morphisms
		\[
		c_{X,Y,Z} : \C(X,Y) \tensor \C(Y,Z) \to \C(X,Z)
		\]
		for all $X,Y,Z \in \ob \C$, subject to commutative diagrams expressing the associativity of composition and the fact that $u_X$ are units of composition.
		For details see~\cite{borceux:second},~6.2.1.
		
	\end{enumerate}
\end{convention}

\begin{examples}
	\phantom{phantom}
	\begin{enumerate}
		
		\item If $\V = \Met$ then $\C$ is an ordinary category in which every hom-set $\C(X,Y)$ carries a metric such that composition is nonexpanding.
		Analogously for $\V = \CMet$.
		
		\item If $\V = \CPO$ then each hom-set $\C(X,Y)$ carries a cpo such that composition is continuous.
		Analogously for $\DCPO$.
		
	\end{enumerate}
\end{examples}

Let us recall the concept of an \emph{enriched functor} $F : \C \to \C'$ for (enriched) categories $\C$ and $\C'$.
It assigns
\begin{enumerate}
	
	\item an object $FX \in \ob \C'$ to every object $X \in \ob \C$, and
	
	\item a morphism $F_{X,Y} : \C(X,Y) \to \C'(FX,FY)$ of $\V$ to every pair $X,Y \in \ob \C$ so that the expected diagrams expressing that $F$ preserves composition and identity morphisms commute.
	
\end{enumerate}

\begin{convention}
	By a functor we always mean an enriched functor.
	We use 'ordinary functor' in the few cases where a non-enriched functor is meant.
\end{convention}

\begin{examples}
	\phantom{phantom}
	\begin{enumerate}
		
		\item For categories enriched over $\Met$ a functor $F : \C \to \C'$ is an ordinary functor which is \emph{locally nonexpanding}: given $f,g \in \C(X,Y)$ we have $d(f,g) \geq d(Ff,Fg)$.
		Analogously for $\CMet$.
		
		\item For categories enriched over $\Pos$ functors $F$ are the locally monotone ordinary functors: given $f \sqleq g$ in $\C(X,Y)$, we get $Ff \sqleq Fg$ in $\C(FX,FY)$.
		
		\item If $\V = \CPO$, then $F$ is an ordinary functor which is \emph{locally continuous}: it is locally monotone and for all $\omega$-chains $f_n : X \to Y$ in $\C(X,Y)$ we have $F(\bigsqcup_{n < \omega} f_n) = \bigsqcup_{n < \omega} Ff_n$.
		Analogously for $\DCPO$.
		
	\end{enumerate}
\end{examples}

\begin{remark}
	\phantom{phantom}
	\begin{enumerate}
		
		\item In general one also needs the concept of an enriched natural transformation between parallel (enriched) functors.
		However, if $\V$ is one of the categories of Example~\ref{E:smc}, this concept is just that of an ordinary natural transformation between the underlying ordinary functors.
		
		\item Given two categories $\D,\C$, we denote by $[\D,\C]$ the category of all functors $F: \D \to \C$ enriched by assigning to every pair of functors $F,G : \D \to \C$ an appropriate object $[F,G]$ of $\V$ of all natural transformations.
		
		In case $\V = \Pos$, $\CPO$ or $\DCPO$ the order of $[F,G]$ is component-wise: given $\tau,\tau' : F \to G$ put $\tau \sqleq \tau'$ iff $\tau_X \sqleq \tau_X'$ holds in $[FX,GX]$ of all $X \in \ob \D$.
		
		In case $\V = \Met$ or $\CMet$ the distance of $\tau,\tau'$ is $\sup_{X \in \ob \D} d(\tau_X,\tau_X')$.
		
	\end{enumerate}
\end{remark}

\begin{definition}[\cite{borceux:second,kelly:book}]
	\label{D:wc}
	A \emph{weighted diagram} in a category $\C$ is given by a functor $D : \D \to \C$ together with a weight $W: \D^\op \to \V$.
	A \emph{weighted colimit} is an object $C = \Colim{W}{D}$ of $\C$ together with isomorphisms  in $\V$:
	\[
	\psi_X : \C(C,X) \to [\D^\op,\C](W,\C(D\blank,X))
	\]
	natural in $X \in \ob \C$.
	The \emph{unit} of this colimit is the natural transformation 
	\[
	\nu = \psi_C(\id_C) : W \to \C(D\blank, C).
	\]
\end{definition}

In all categories of Example~\ref{E:smc} weighted colimits (for all $\D$ small) exist.

\begin{example}
	\label{E:conical}
	\emph{(Conical) directed colimits} are the special case where $\D$ is a directed poset and the weight $W$ is trivial: the constant functor with value $\One$ (the terminal object).
	
	\begin{enumerate}
		
		\item In $\Pos$ directed colimits are formed on the level of the underlying sets.
		They commute with finite products.
		
		\item In $\CPO$ directed colimits exist, but are not formed on the level of the underlying sets.
		For example, the finite ordinals $A_n = \{ 0, \dots, n-1 \}$ form a directed diagram with inclusions $A_n \hookrightarrow A_{n+1}$ as connecting maps for $n < \omega$.
		The colimit of this diagram in $\CPO$ is
		\[
		\Nat^\top = \colim_{n < \omega} A_n,
		\]
		the chain of natural numbers with a top element $\top$ added.
		Still, directed colimits commute with finite products in $\CPO$:
	\end{enumerate}
\end{example}

\begin{theorem}
	\label{T:ccc}
	In every cartesian closed category $\C$ directed colimits commute with finite products.
\end{theorem}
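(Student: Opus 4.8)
The plan is to exploit the defining adjunction of a cartesian closed category. Since $\C$ is cartesian closed, for every object $A$ the product functor $\blank \times A : \C \to \C$ has the right adjoint $[A,\blank]$. A functor with a right adjoint preserves every colimit that exists in its domain; in the enriched setting a left $\V$-adjoint preserves all weighted colimits, and directed colimits in the sense of Example~\ref{E:conical} are conical colimits (the weight being the constant functor at the terminal object $\One$), hence a special case. Therefore $\blank \times A$, and by symmetry of the product also $A \times \blank$, preserves directed colimits for every fixed $A$.

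First I would record the single-variable consequence: for a directed diagram $D : \D \to \C$ with colimit $C = \colim_\D D$ and any object $A$,
\[
C \times A \;\cong\; \colim_{i \in \D} (D_i \times A),
\]
and symmetrically in the first coordinate. This is immediate from the preservation just noted.

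Next I would treat the genuinely two-variable commuting statement. Given directed posets $\D, \D'$ and directed diagrams $D : \D \to \C$ and $D' : \D' \to \C$, the product poset $\D \times \D'$ is again directed, and I claim
\[
\bigl(\colim_\D D\bigr) \times \bigl(\colim_{\D'} D'\bigr) \;\cong\; \colim_{(i,j) \in \D \times \D'} \bigl(D_i \times D'_j\bigr).
\]
I would prove this by applying the single-variable result twice and then interchanging the iterated colimits: writing $C' = \colim_{\D'} D'$, one has $\bigl(\colim_\D D\bigr) \times C' \cong \colim_{i \in \D} (D_i \times C')$ and, for each $i$, $D_i \times C' \cong \colim_{j \in \D'} (D_i \times D'_j)$, so the left-hand side equals the iterated colimit $\colim_{i \in \D}\colim_{j \in \D'} (D_i \times D'_j)$. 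Since colimits commute with colimits (Fubini), this agrees with the colimit of the single diagram over $\D \times \D'$. The general finite-product case follows by induction on the number of factors; the nullary case is the remark that the colimit of the constant diagram at $\One$ over a directed (hence nonempty and connected) poset is again $\One$.

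The conceptual work is entirely in the first paragraph: cartesian closedness is exactly what turns each product functor into a left adjoint and thus forces preservation of directed colimits; without it a product functor need not preserve them (this is why the cartesian closed $\CPO$ works, in contrast to the non-closed $\Met$ and $\CMet$). The step I expect to require the most care is the passage from the one-variable preservation to the two-variable commuting statement, that is, justifying the interchange of the iterated colimits, checking that $\D \times \D'$ is directed, and confirming that all colimits in question exist. In the enriched setting one must also verify that the isomorphisms involved are $\V$-natural, which follows because each arises either from a $\V$-adjunction or from the Fubini interchange of weighted colimits.
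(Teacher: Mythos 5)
Your proof is correct and follows essentially the same route as the paper: both arguments rest on the observation that cartesian closedness makes each $\blank \times A$ a left adjoint (hence a preserver of all weighted, in particular conical directed, colimits), apply this once in each variable, and relate the diagram over the product poset to the diagonal one --- the paper's explicit double adjoint-transposition in its step~(3) is exactly your ``iterated one-variable preservation plus Fubini'' written out by hand. The only step to make explicit is the final passage from the colimit over $\D \times \D'$ back to the diagonal diagram over a single directed $\D$, which the paper handles by noting that $\diag : \D \to \D \times \D$ is cofinal.
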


\begin{proof}
	\phantom{phantom}
	\begin{enumerate}
		
		\item Suppose $D,D' : \D \to \C$ are diagrams, where $\D$ is a directed poset.
		Given colimit cocones $c_d : Dd \to C$ and $c_d' : D'd \to C$, it is our task to prove that for the diagram $D \times D' : \D \to \C$ (given by $d \mapsto Dd \times D'd$) the cocone $c_d \times c_d' : Dd \times D'd' \to C \times C'$ is a colimit, too.

		\item Define a diagram $D * D' : \D \times \D \to \C$ by $(d,d') \mapsto Dd \times D'd'$.
		We shall prove that it has the colimit $c_d \times c_{d'}' : Dd \times D'd' \to C \times C'$.
		This proves the theorem: since $\D$ is directed, the diagonal $\diag: \D\to \D \times \D$ is a cofinal functor, thus $D * D'$ has the same colimit as $D \times D' = (D * D') \comp \diag$.
		
		\item Given a cocone $f_{d,d'} : Dd \times D'd' \to E$ of $D * D'$, we prove that it factorizes through $c_d \times c_{d'}'$; it is easy to verify that the factorization is unique.
		Fix an object $d' \in \D$ and form the adjoint transposes $\wh{f}_{d,d'}: D'd' \to [Dd,E]$ for all $d \in \D$.
		They form a cocone of $D$, thus, there exists a unique factorization through the cocone $c_d$. That is, we have a unique $g_{d'}: D'd' \times C \to E$ with $\wh{f}_{d,d'} = \wh{g}_{d'} \comp c_d$ (for all $d \in \D$).
		For the isomorphism $u: C \times D'd' \to D'd' \times C$ put $h_{d'} = g_{d'} \comp u : C \times D'd' \to E$ and form adjoint transposes $\wh{h}_{d'} : D'd' \to [C,E]$ for all $d' \in \D$.
		This is a cocone of $D'$, thus there exists a unique factorization through the cocone $c_{d'}'$: we have a unique $h: C \times C' \to E$ with $h_{d'}' = \wh{h} \comp c_{d'}'$ (for all $d' \in \D$).
		It follows that $h$ is the desired factorization of $f_{d,d'}$:
		\[
		h \comp (c_d \times c_{d'}') = h \comp (C \times c_{d'}') \comp (c_d \times C') = h_{d'} \comp (c_d \times D'd') = f_{d.d'}.
		\]
		
	\end{enumerate}
\end{proof}

\begin{example}
	\label{E:dir-met}
	(Conical) directed colimits in $\Met$ and $\CMet$ also exist.
	Again, they are not formed on the level of the underlying sets.
	For example, consider the diagram of metric space $A_n = \{ 0, 1 \}$ with $d_n(0,1) = 2^{-n}$, where the connecting maps are $\id: A_n \to A_{n+1}$ ($n < \omega$).
	The colimit is a singleton space.
\end{example}

\begin{theorem}
	Directed colimits in $\Met$ or $\CMet$ commute with finite products.
\end{theorem}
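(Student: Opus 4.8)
The plan is to follow the reduction in the proof of Theorem~\ref{T:ccc}, but to replace its internal-hom argument---unavailable because $\Met$ and $\CMet$ are not cartesian closed---by the explicit construction of directed colimits together with a single interchange identity for $\inf$ and $\max$. So let $D, D' : \D \to \Met$ be diagrams over a directed poset $\D$ with colimit cocones $c_d : Dd \to C$ and $c_{d'}' : D'd' \to C'$. Forming $D * D' : \D \times \D \to \Met$ by $(d,d') \mapsto Dd \times D'd'$, it again suffices to prove that its colimit is $c_d \times c_{d'}'$: since $\D$ is directed the diagonal $\diag : \D \to \D \times \D$ is cofinal, so $D \times D' = (D * D') \comp \diag$ has the same colimit as $D * D'$.

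Next I would use the concrete description of a directed colimit in $\Met$: its underlying set is the directed colimit of the underlying sets, on which one puts the pseudometric $d(x,y) = \inf_{e} d_{De}(\bar x, \bar y)$, where $\bar x, \bar y$ are the images in $De$ of representatives $x, y \in Dd$ with $e \geq d$; this is well defined because the connecting maps are nonexpanding, so the net $e \mapsto d_{De}(\bar x, \bar y)$ is antitone. The colimit object is this set modulo the relation $x \approx y \iff d(x,y) = 0$, carrying the induced metric. Applying this to $D * D'$ over the directed poset $\D \times \D$, the distance of $[(x_1,y_1)]$ and $[(x_2,y_2)]$ is $\inf_{(e,e')} \max\bigl(d_{De}(x_1,x_2),\, d_{D'e'}(y_1,y_2)\bigr)$, whereas in $C \times C'$, carrying the supremum metric of a product, the distance of the corresponding pair is $\max\bigl(\inf_e d_{De}(x_1,x_2),\, \inf_{e'} d_{D'e'}(y_1,y_2)\bigr)$.

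The crux is therefore the interchange identity
\[
\inf_{(e,e') \in \D \times \D} \max(a_e, b_{e'}) = \max\bigl(\inf_e a_e,\ \inf_{e'} b_{e'}\bigr),
\]
valid for arbitrary families $a_e, b_{e'} \in [0,\infty]$: the inequality $\geq$ is immediate, and for $\leq$ one picks, given $\eps > 0$, indices $e, e'$ with $a_e$ and $b_{e'}$ within $\eps$ of their respective infima. This is precisely the place where the supremum metric of the product is used (the addition metric of the tensor would not give a product) and where no internal hom is needed. The identity shows the two pseudometrics coincide; since $\max$ vanishes iff both arguments do, the relation $\approx$ on $\colim(D * D')$ identifies $(x_1,y_1)$ with $(x_2,y_2)$ exactly when $x_1 \approx x_2$ in $C$ and $y_1 \approx y_2$ in $C'$. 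Combined with the commutation of directed colimits with finite products in $\Set$, this exhibits an isometric isomorphism between $\colim(D * D')$ and $C \times C'$, which one checks is induced by $c_d \times c_{d'}'$.

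For $\CMet$ I would reduce to the case just settled. The inclusion $\CMet \hookrightarrow \Met$ is reflective with Cauchy completion as left adjoint, so every directed colimit in $\CMet$ is the completion of the corresponding colimit in $\Met$, and left adjoints preserve colimits. Since in the supremum metric a sequence in $X \times Y$ is Cauchy iff both of its coordinate sequences are Cauchy, and likewise for convergence, completion commutes with finite products; combining this with the $\Met$-result yields the statement for $\CMet$. The main obstacle I anticipate is the bookkeeping forced by the quotient inherent in $\Met$-colimits---verifying not merely that the two pseudometrics agree but that the induced identifications $\approx$ match exactly and that the comparison map is genuinely $c_d \times c_{d'}'$---whereas the mathematical content reduces to the short $\inf$--$\max$ interchange above.
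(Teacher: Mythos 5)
Your proof is correct, and the computational heart of it is the same as the paper's: an interchange of an infimum over a directed index set with a binary $\max$. The scaffolding differs, though. The paper does not reconstruct the colimit; it invokes the characterization of directed colimit cocones $c_i : D_i \to C$ in $\Met$ from Adámek--Rosický (the union condition $C = \bigcup_i c_i[D_i]$ together with $d(c_i(y),c_i(y')) = \inf_{j \geq i} d(f_j(y),f_j(y'))$) and simply verifies both conditions for the cocone $c_i \times c_i'$ over the \emph{same} index poset, where the ``short computation'' is your $\inf$--$\max$ identity restricted to the diagonal (using antitonicity of the nets and directedness). You instead pass to $\D \times \D$ via cofinality of the diagonal and rebuild the colimit explicitly as the quotient of the $\Set$-colimit by the zero-distance relation of the $\inf$-pseudometric; this costs you the bookkeeping about matching quotient relations that you rightly flag, but makes the interchange valid for arbitrary families and keeps the argument self-contained. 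The sharper divergence is in the $\CMet$ case: the paper adapts the same characterization, replacing the union condition by density of $\bigcup_i c_i[D_i]$ in $C$, whereas you route through reflectivity of $\CMet$ in $\Met$ and the fact that Cauchy completion commutes with finite products under the supremum metric. Both are sound; your version trades the density computation for the (standard but worth stating) fact that colimits in a reflective subcategory are reflections of colimits in the ambient category.
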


\begin{proof}[Proof sketch]
	\phantom{phantom}
	\begin{enumerate}
		
		\item For directed diagrams $(D_i)_{i \in I}$ in $\Met$, cocones $c_i : D_i \to C$ forming a colimit were characterized in~\cite{adamek+rosicky:approximate-injectivity}, Lemma~2.4, by the following properties:
		\begin{enumerate}[label=(\alph*)]
			
			\item $C = \bigcup_{i \in I} c_i[D_i]$, and
			
			\item for every $i \in I$, given $y,y' \in D_i$ we have
			\[
			d(c_i(y),c_i(y')) = \inf_{j \geq i} d(f_j(y), f_j(y'))
			\]
			where $f_j : D_i \to D_j$ denotes the connecting map.
			
		\end{enumerate}
		Given another directed diagram $(D_i')_{i \in I}$ with a cocone $c_i' : D_i' \to C'$ satisfying (a) and (b), it is our task to prove that the cocone $c_i \times c_i' : D_i \times D_i' \to C \times C'$ satisfies (a), (b), too.
		Since $I$ is directed, (a) is clear, and (b) needs just a short computation.
		
		\item For directed colimits in $\CMet$ the characterization of colimit cocones is analogous: (b) is unchanged, and in (a) one states that $\bigcup_{i \in I} c_i[D_i]$ is dense in $C$.
		The further argument is then analogous to (1).
		
	\end{enumerate}
\end{proof}

\begin{example}
	\label{E:coins}
	\phantom{phantom}
	\begin{enumerate}
		
		\item For our next development an important type of a weighted colimit in $\Pos$, $\CPO$ or $\DCPO$ is the \emph{coinserter}.
		Let $f_0,f_1 : A \to B$ be an ordered parallel pair.
		Its coinserter is a universal morphism $c : B \to C$ w.r.t.\ the property $c \comp f_0 \sqleq c \comp f_1$.
		Universality means that
		\begin{enumerate}
			\item every morphism $c'$ with $c' \comp f_0 \sqleq c' \comp f_1$ factorizes through $c$ and
			\item given $u,v : C \to D$ with $u \comp c \sqleq v \comp c$, it follows that $u \sqleq v$.
		\end{enumerate}
		For $\Pos$ this is precisely the weighted colimit of the diagram $D: \D \to \Pos$ where $\D$ consists of a single parallel pair $\delta_0, \delta_1 : d \to \ol{d}$ (where $\D(d,\ol{d})$ is a discrete poset) and $D \delta_i = f_i$.
		The weight $W : \D^\op \to \Pos$ is given by $W\ol{d} = \{ 0, 1\}$ where $0 < 1$, $Wd = \{ * \}$ and $W \delta_i(*) = i$.
		Analogously for $\CPO$ or $\DCPO$.
		
		\item A concrete example: every poset $C$ is a coinserter of a parallel pair between discrete posets.
		Indeed, let $|C|$ be the discrete poset underlying $C$ and let $C^{(2)} \subseteq |C| \times |C|$ be the set of all pairs $x_0 \sqleq x_1$ in $C$.
		For the pair of projections $\pi_0,\pi_1 : C^{(2)} \to |C|$ the coinserter is $\id: |C| \to C$.
		Analogously for $\CPO$ and $\DCPO$.
		
	\end{enumerate}
\end{example}

\begin{definition}
	A coinserter $c$ of $f_0,f_1$ is called \emph{surjective} if $c$ is a surjective map.
	It is called \emph{reflexive} if $f_0,f_1$ is a reflexive pair, i.e.\ they are split epimorphisms with a joint splitting $d$ ($f_0 \comp d = f_1 \comp d = \id$).
\end{definition}

\begin{example}
	The coinserter of Example~\ref{E:coins}~(2) is reflexive and surjective.
	In $\Pos$, all coinserters are surjective, in $\CPO$ they are not in general.
\end{example}

\begin{proposition}
	\label{P:ccc}
	In $\Pos$, $\CPO$ and $\DCPO$ reflexive coinserters commute with finite products.
\end{proposition}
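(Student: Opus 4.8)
The plan is to read the statement as closure of the class of reflexive coinserters under the pointwise product of parallel-pair diagrams. So I would fix a reflexive coinserter $c : B \to C$ of $f_0,f_1 : A \to B$ with joint splitting $d$ (so $f_0 \comp d = f_1 \comp d = \id_B$) and a second one $c' : B' \to C'$ of $f_0',f_1' : A' \to B'$ with splitting $d'$. The product diagram is the parallel pair $f_0 \times f_0',\, f_1 \times f_1' : A \times A' \to B \times B'$, and I would first note that it is again reflexive, with joint splitting $d \times d'$, since $(f_i \times f_i') \comp (d \times d') = (f_i \comp d) \times (f_i' \comp d') = \id$. The real task is then to show that $c \times c' : B \times B' \to C \times C'$ is its coinserter. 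Throughout I would exploit that $\Pos$, $\CPO$ and $\DCPO$ are cartesian closed (Examples~\ref{E:smc}), so that for every object $X$ the functor $\blank \times X$ has a right adjoint $[X,\blank]$ and hence, being a left adjoint, preserves all weighted colimits — in particular coinserters.

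The easy half comes first. In $C$ one has $c \comp f_0 \sqleq c \comp f_1$ and in $C'$ one has $c' \comp f_0' \sqleq c' \comp f_1'$; multiplying these two inequalities gives $(c \times c') \comp (f_0 \times f_0') \sqleq (c \times c') \comp (f_1 \times f_1')$, so $c \times c'$ is a cocone for the product pair. Writing $\kappa : B \times B' \to P$ for the coinserter of that pair, I obtain a unique comparison $w : P \to C \times C'$ with $c \times c' = w \comp \kappa$. At this point I would record the consequences of cartesian closedness that the rest of the argument needs: $c \times B'$ is the coinserter of $f_0 \times B', f_1 \times B'$, the map $c \times A'$ is a coinserter of $f_0 \times A', f_1 \times A'$, and $C \times c'$ is the coinserter of $C \times f_0', C \times f_1'$. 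In particular $c \times c' = (C \times c') \comp (c \times B')$ is a composite of coinserters, and since each coinserter reflects the order (universal property (ii)), so does $c \times c'$.

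The crux is the reverse factorization, and this is exactly where reflexivity enters. I would show that $\kappa$ factors through $c \times c'$ by first extracting two one-sided inequalities from the single product inequality $\kappa \comp (f_0 \times f_0') \sqleq \kappa \comp (f_1 \times f_1')$. Precomposing with $A \times d'$ and using $f_0' \comp d' = f_1' \comp d' = \id_{B'}$ collapses the second coordinate to the identity and yields $\kappa \comp (f_0 \times B') \sqleq \kappa \comp (f_1 \times B')$; symmetrically, precomposing with $d \times A'$ yields $\kappa \comp (B \times f_0') \sqleq \kappa \comp (B \times f_1')$. The first inequality lets $\kappa$ factor through the coinserter $c \times B'$ as $\kappa = \kappa_1 \comp (c \times B')$. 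Substituting this into the second inequality and rewriting $(c \times B') \comp (B \times f_i') = (C \times f_i') \comp (c \times A')$ gives $\kappa_1 \comp (C \times f_0') \comp (c \times A') \sqleq \kappa_1 \comp (C \times f_1') \comp (c \times A')$; since $c \times A'$ reflects the order I may cancel it to get $\kappa_1 \comp (C \times f_0') \sqleq \kappa_1 \comp (C \times f_1')$, whence $\kappa_1$ factors through $C \times c'$ as $\kappa_1 = \kappa_2 \comp (C \times c')$, so $\kappa = \kappa_2 \comp (c \times c')$.

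Finally I would check that $w$ and $\kappa_2$ are mutually inverse: from $w \comp \kappa_2 \comp (c \times c') = w \comp \kappa = c \times c'$ together with the fact that $c \times c'$ reflects the order (hence is epic) one gets $w \comp \kappa_2 = \id_{C \times C'}$, and from $\kappa_2 \comp w \comp \kappa = \kappa_2 \comp (c \times c') = \kappa$ with $\kappa$ epic one gets $\kappa_2 \comp w = \id_P$; this identifies $c \times c'$ with the coinserter of the product pair, uniformly in $\Pos$, $\CPO$ and $\DCPO$. The one genuinely delicate point — and the reason the statement is restricted to reflexive coinserters — is the extraction of the two one-sided inequalities from the single product inequality by substituting the splittings $d,d'$; for a non-reflexive pair the product inequality is strictly weaker than the conjunction of the two separate ones, and the factorization of $\kappa$ through $c \times c'$ fails. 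I expect this substitution step, rather than any of the universal-property bookkeeping, to be the heart of the proof.
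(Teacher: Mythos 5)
Your proof is correct and follows essentially the route the paper takes: the paper establishes this proposition by the same method as Theorem~\ref{T:ccc}, working one variable at a time via cartesian closedness (so that $\blank \times X$, being a left adjoint, preserves coinserters) and identifying the colimit with the iterated coinserter $(C\times c')\comp(c\times B')$. Your explicit use of the splittings $d,d'$ to extract the two one-sided inequalities from the single diagonal inequality is precisely the analogue, for the coinserter shape, of the diagonal-cofinality step in the directed case, and it correctly isolates the only point where reflexivity is genuinely needed.
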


The proof is similar to that of Theorem~\ref{T:ccc}.

Analogously to coinserters of discrete cpos yielding all cpos, we now introduce weighted colimits in~$\Met$ of diagrams called precongruences (a name borrowed from~\cite{bourke-garner:monads-and-theories}). They express every metric space as a colimit of discrete spaces. (The weight used for precongruence is, however, not discrete.)

In the following definition $|M|$ denotes the underlying set (a discrete metric space) of a metric space $M$.

\begin{definition}
	\phantom{phantom}
	\begin{enumerate}
		
		\item We define the \emph{basic weight} $W_0 : \D_0^\op \to \Met$ as follows.
		The category $\D_0$ has an object $a$ and objects $\eps$ for every rational number $\eps > 0$.
		The only non-trivial hom-spaces are the spaces
		\[
		\D_0(\eps,a) = \{ \lambda_\eps, \rho_\eps \} \text{ with } d(\lambda_\eps,\rho_\eps) = \eps
		\]
		Thus $\D_0$ consists of the discrete category of positive rationals together with a pair of cocones (having codomain $a$).
		The values of $W_0$ are $W_0 a = \{ 0 \}$ and $W_0 \eps = \{ l, r \}$ with $d(l,r) = \eps$.
		The morphisms $W_0 \lambda_\eps, W_0 \rho_\eps: \{ 0 \} \to \{ l, r \}$ are given by $0 \mapsto l$, $0 \mapsto r$, respectively.
		
		\item For every metric space $M$ we define its \emph{precongruence} as the weighted diagram $D_M : \D_0 \to \Met$ with the basic weight $W_0$, where $D_M a = |M|$ and $D_M \eps \subseteq |M| \times |M|$ is the set of all pairs of distance at most $\eps$.
		Here $D \lambda_\eps, D \rho_\eps : D_M \eps \to |M|$ are the left and right projections, respectively.
		
	\end{enumerate}
\end{definition}

\begin{proposition}
	\label{P:prec}
	Every metric space $M$ is the weighted colimit of its precongruence in $\Met$.
\end{proposition}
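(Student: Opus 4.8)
The plan is to show that $M$ \emph{itself} is the weighted colimit, with the unit $\nu\colon W_0\to\Met(D_M\blank,M)$ whose apex component $\nu_a(0)\colon|M|\to M$ is the identity-on-points map (nonexpanding, since $d_{|M|}(x,x')=\infty\geq d_M(x,x')$) and whose components $\nu_\eps(l)=\nu_a(0)\comp\pi_0$, $\nu_\eps(r)=\nu_a(0)\comp\pi_1$ are the two projections, where $\pi_0=D_M\lambda_\eps$ and $\pi_1=D_M\rho_\eps$. By Definition~\ref{D:wc} it then suffices to produce, naturally in $X$, an isomorphism in $\Met$ between $\Met(M,X)$ and the space $[\D_0^\op,\Met](W_0,\Met(D_M\blank,X))$ of weighted cocones, realized by postcomposition with $\nu$. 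So I would first unwind what such a weighted cocone $\tau\colon W_0\to\Met(D_M\blank,X)$ amounts to.

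Unwinding: the apex component picks a morphism $f=\tau_a(0)\colon|M|\to X$, and since $|M|$ is discrete this is an arbitrary function on points. Naturality of $\tau$ at $\lambda_\eps$ and $\rho_\eps$ forces $\tau_\eps(l)=f\comp\pi_0$ and $\tau_\eps(r)=f\comp\pi_1$, so $\tau$ is entirely determined by $f$. The only surviving condition is that each $\tau_\eps\colon W_0\eps\to\Met(D_M\eps,X)$ be nonexpanding, i.e.\ $d(\tau_\eps(l),\tau_\eps(r))\leq d(l,r)=\eps$; spelling out the supremum metric on $\Met(D_M\eps,X)$, this says that $d_X(f(x),f(y))\leq\eps$ whenever $d_M(x,y)\leq\eps$, for every rational $\eps>0$. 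The key step is to recognise this family of conditions (ranging over all rational $\eps$) as equivalent to $f$ being nonexpanding as a map out of the \emph{actual} metric space $M$. One direction is immediate; for the converse I would fix $x,y$, note the condition gives $d_X(f(x),f(y))\leq\eps$ for every rational $\eps\geq d_M(x,y)$, and take the infimum over such $\eps$ (with the case $d_M(x,y)=\infty$ vacuous) to obtain $d_X(f(x),f(y))\leq d_M(x,y)$. Thus weighted cocones correspond bijectively to $\Met(M,X)$, with $\nu$ itself corresponding to $\id_M$.

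It remains to check that this bijection is an isometry, hence an isomorphism in $\Met$. The hom-metric of the enriched functor category is the supremum, over the objects $a$ and $\eps$ of $\D_0$, of the componentwise distances. At $a$ this distance is $\sup_{x}d_X(f(x),f'(x))$; at each $\eps$, since $D_M\eps$ contains every diagonal pair $(x,x)$ (as $d_M(x,x)=0\leq\eps$), the sup over $D_M\eps$ again collapses to $\sup_{x}d_X(f(x),f'(x))$. Hence the functor-category distance of the cocones attached to $f,f'$ equals $\sup_{x}d_X(f(x),f'(x))$, which is precisely the supremum metric of $f,f'$ in $\Met(M,X)$. Naturality in $X$ is routine, and the isomorphism is by construction postcomposition with $\nu$, so $M=\Colim{W_0}{D_M}$.

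I expect the two steps carrying the genuine content to be, first, the converse of the nonexpansiveness equivalence — the passage from the rational-indexed inequalities to the single inequality $d_X(f(x),f(y))\leq d_M(x,y)$ via density of the rationals, together with correct handling of the distance $\infty$ — and, second, the verification that the enriched functor-category supremum metric collapses exactly to the supremum metric on $\Met(M,X)$ (in particular that the contributions from the objects $\eps$ add nothing beyond the apex contribution). Everything else is bookkeeping about the shape of $\D_0$, $W_0$ and $D_M$.
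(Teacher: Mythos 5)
Your proof is correct and follows essentially the same route as the paper's: unwind a weighted cocone $\tau$ to the single map $f=\tau_a(0)$, observe that the nonexpansiveness of the components $\tau_\eps$ is equivalent to $f\colon M\to X$ being nonexpanding, and take $\psi_X(\tau)=f$. You merely spell out two points the paper leaves implicit (the density-of-rationals argument and the check that $\psi_X$ is an isometry), and both are handled correctly.
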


\begin{proof}
	Given a space $X$, to give a natural transformation $\tau: W_0 \to [D_M\blank,X]$ means to specify a map $f = \tau_a(0) : |M| \to X$ together with $\tau_\eps(l), \tau_\eps(r) : D_M \eps \to X$ with $\tau_\eps(l) = f \comp \pi_l$ and $\tau_\eps(r) = f \comp \pi_r$.
	Thus $\tau$ is determined by $f$ and the given equations are equivalent to $f : M \to X$ being nonexpanding.
	The desired isomorphism $\psi_X$ of Definition~\ref{D:wc} is given by $\psi_X(\tau) = f$.
\end{proof}

\begin{remark}
	Analogously we define precongruences in $\CMet$: we just use the codomain restrictions $W_0 : \D_0^\op \to \CMet$ and $D_M : \D_0 \to \CMet$.
	Again, every complete space is the weighted colimit of its precongruence in $\CMet$.
\end{remark}

\begin{remark}
	\phantom{phantom}
	\begin{enumerate}
		
		\item Let $C = \Colim{W}{D}$ be a weighted colimit of $D : \D \to \C$ with unit $\nu : W \to \C(D\blank, C)$.
		Given an (enriched) functor $F : \C \to \C'$, it \emph{preserves the colimit} provided that the diagram $FD : \D \to \C'$ with weight $W$ has the colimit $\Colim{W}{FD} = FC$ with the unit $\ol{\nu} : W \to \C'(FD\blank,FC)$
		having components $\ol{\nu}_d = F\nu_d$.
		
		\item A functor is \emph{finitary} if it preserves directed colimits.
		
	\end{enumerate}
\end{remark}

\begin{example}
	\phantom{phantom}
	\label{E:power}
	\begin{enumerate}
		
		\item The endofunctor $(\blank)^n$ of $\Met$ or $\CMet$ preserves colimits of precongruences for every $n \in \Nat$.
		This is easy to verify.
		By Example~\ref{E:dir-met} $(\blank)^n$ is finitary.
		
		\item The endofunctor $(\blank)^n$ on $\CPO$ or $\Pos$ preserves reflexive coinserters for every $n \in \Nat$ by Proposition~\ref{P:ccc}, and is finitary by Theorem~\ref{T:ccc}.
		
	\end{enumerate}
\end{example}

Let us recall the concept of the (enriched) \emph{left Kan extension}~\cite{kelly:book} of a functor $F : \A \to \C$ along a functor $K : \A \to \C$: this is an endofunctor $\Lan{K}{F} : \C \to \C$ endowed with a universal natural transformation $\tau: F \to (\Lan{K}{F}) \comp K$.
The universal property states that given a natural transformation $\sigma : F \to G \comp K$ for any endofunctor $G : \C \to \C$, there exists a unique natural transformation $\ol{\sigma} : \Lan{K}{F} \to G$ with $\sigma = \ol{\sigma}K \comp \tau$.

\begin{definition}
	An endofunctor $F$ of $\V$ is \emph{strongly finitary} if it is a left Kan extension of its restriction $F \comp K$ to $\Set_\fin$:
	\[
	F = \Lan{K}{(F \comp K)}
	\]
	(see Notation~\ref{N:K}).
\end{definition}

\begin{examples}
	\phantom{phantom}
	\begin{enumerate}
		
		\item An endofunctor of $\Set$ is strongly finitary iff it is finitary.
		
		\item An endofunctor of $\Pos$ is strongly finitary iff it is finitary and preserves reflexive coinserters, see~\cite{adv:ordered-algebras}.
		
	\end{enumerate}
\end{examples}

In order to characterize strong finitarity for $\V = \CPO$, $\DCPO$, $\Met$ and $\CMet$, we apply Kelly's concept of density presentation that we now recall.

\begin{notation}
	Let $K : \A\to \C$ be a functor.
	We denote by $\wt{K} : \C \to [\A^\op, \V]$ the functor $\wt{K}C = \C(K\blank, C)$.
\end{notation}

For example, the functor $K : \Set_\fin \to \CPO$ yields $\wt{K} : \CPO \to [\Set_\fin^\op,\CPO]$ taking a cpo $C$ to the functor $C^{(\blank)} : \Set_\fin^\op \to \CPO$ of finite powers of $C$.
Analogously for $K : \Set_\fin \to \Met$.

\begin{definition}[\cite{kelly:book}]
	A \emph{density presentation} of a functor $K: \A \to \C$ is a collection of weighted colimits in $\C$ such that
	\begin{enumerate}[label=(\arabic*)]
		
		\item $\wt{K}$ preserves those colimits, and
		
		\item $\C$ is the (iterated) closure of the image $K[\A]$ under those colimits.
	\end{enumerate}
\end{definition}

\begin{example}
	\label{E:density}
	\phantom{phantom}
	\begin{enumerate}[label=(\alph*)]
		
		\item $K : \Set_\fin \to \Pos$ has a density presentation consisting of directed colimits and reflexive coinserters.
		Indeed, Condition~(2) in the above definition follows from Example~\ref{E:coins}~(2): every finite poset is a coinserter of a pair in $\Set_\fin$.
		And every poset is a directed colimit of finite subposets.
		For Condition~(1) observe that $\wt{K} : \Pos \to [\Set_\fin^\op, \Pos]$ assigns to every poset $A$ the functor of its finite powers $A^{(\blank)} : \Set_\fin^\op \to \Pos$.
		This functor preserves directed colimits (Proposition~\ref{T:ccc}) and reflexive coinserters (Example~\ref{E:power}).
		
		\item $K : \Set_\fin \to \CPO$ also has a density presentation consisting of directed colimits and reflexive coinserters.
		Here Condition~(1) is verified as above, using Proposition~\ref{T:ccc} and Example~\ref{E:power}.
		To verify Condition~(2), we express every cpo in four steps, starting from $\Set_\fin$:
		\begin{enumerate}[label=(\roman*)]
			
			\item Every finite cpo is a reflexive coinserter of a parallel pair in $\Set_\fin$ (Example~\ref{E:coins}~(2)).
			
			\item The cpo $\Nat^\top$ is a directed colimit of finite cpos $A_n$ (Example~\ref{E:conical}~(2)).
			Analogously, every copower $r \tens \Nat^\top$ of $r$ copies, $r \in \Nat$, is a directed colimit of $r \tens A_n$ for $n < \omega$.
			
			\item In this step we create all reflexive coinserters $C$ of pairs $f_0,f_1 : r \tens \Nat^\top \to r' \tens \Nat^\top$ for all $r,r' \in \Nat$.
			Such cpos $C$ are called \emph{basic}.
			
			\item The proof is concluded by proving that every cpo $A$ is a directed colimit of the diagram of all of its basic sub-cpos $A_i$ ($i \in I$).
			In fact, that this diagram is directed follows from the fact that a coproduct of two basic cpos is clearly basic.
			Given a cocone $s_i : A_i \to S$, we are to prove that there is a unique continuous map $s : A \to S$ extending each $s_i$.
			For each $x \in A$ the subposet $\{ x \}$ is clearly basic.
			Thus given $i$ with $x \in A_i$ the value $s_i(x)$ is independent of $i$.
			This follows easily from the compatibility of the cocone $s_i$.
			The desired map is defined by $s(x) = s_i(x)$.
			The verification that $s$ is continuous is a bit more subtle.
			
		\end{enumerate}
	\end{enumerate}
\end{example}

\begin{remark}
	\label{R:dense-c}
	\phantom{phantom}
	\begin{enumerate}
		
		\item The reflexive coinserters used in steps (i) and (iii) of the last example are all surjective.
		Consequently, $K : \Set_\fin \to \CPO$ also has the density presentation consisting of directed colimits and reflexive, surjective coinserters.
		
		\item The functor $K : \Set_\fin \to \DCPO$ also has the density presentation of all directed colimits and reflexive (surjective) coinserters.
		The proof is the same as for $\CPO$: all cpos used in the last example are indeed dcpos.
		
	\end{enumerate}
	
\end{remark}

\begin{corollary}
	\label{C:density}
	An endofunctor of $\CPO$ or $\DCPO$ is strongly finitary iff it preserves directed colimits and reflexive (surjective) coinserters.
\end{corollary}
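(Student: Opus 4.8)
The statement is an instance of the general relationship between density presentations and left Kan extensions, so the plan is to deduce it from the two conditions already verified for $K$ in Example~\ref{E:density}(b) and Remark~\ref{R:dense-c}. Write $\Phi$ for the chosen density presentation of $K : \Set_\fin \to \CPO$ (resp.\ $\DCPO$), consisting of directed colimits together with reflexive, or reflexive surjective, coinserters; by those two results $\Phi$ satisfies Conditions (1) and (2) of the definition of density presentation. The task is then to show that an endofunctor $F$ satisfies $F = \Lan{K}{(F \comp K)}$ exactly when it preserves the colimits in $\Phi$.

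For the forward implication I would use that a left Kan extension along $K$ factors through $\wt K$. Explicitly, for any $H : \Set_\fin \to \CPO$ the value $(\Lan{K}{H})C$ is the weighted colimit $\Colim{\wt{K}C}{H}$, so $\Lan{K}{H}$ is the composite of $\wt K : \CPO \to [\Set_\fin^\op,\CPO]$ with the weighted-colimit functor $W \mapsto \Colim{W}{H}$. The latter is a left adjoint and hence preserves all colimits, so $\Lan{K}{H}$ preserves every colimit preserved by $\wt K$. Condition (1) says $\wt K$ preserves the colimits in $\Phi$; therefore if $F = \Lan{K}{(F \comp K)}$, then $F$ preserves the colimits in $\Phi$. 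Since this argument applies verbatim to each of the two density presentations of Remark~\ref{R:dense-c}, it yields preservation of reflexive coinserters and, a fortiori, of reflexive surjective ones.

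For the converse, assume $F$ preserves the colimits in $\Phi$ and consider the canonical comparison $\kappa : \Lan{K}{(F \comp K)} \to F$. On objects of the image $K[\Set_\fin]$ it is an isomorphism because $K$ is dense (a functor equipped with a density presentation is automatically dense). Both functors related by $\kappa$ preserve the colimits in $\Phi$: the domain by the factorisation argument above, the codomain by hypothesis. Condition (2) expresses $\CPO$ (resp.\ $\DCPO$) as the iterated closure of $K[\Set_\fin]$ under $\Phi$-colimits, and a natural transformation between two functors that both preserve the colimits in $\Phi$ and that is invertible on $K[\Set_\fin]$ is invertible on the whole closure. Hence $\kappa$ is an isomorphism and $F$ is strongly finitary.

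The main obstacle is the propagation step in the converse: verifying that invertibility of $\kappa$ spreads from the generators $K[\Set_\fin]$ along the $\Phi$-colimits used in Example~\ref{E:density}(b), in particular step~(iv), where a cpo is realised as a directed colimit of its basic sub-cpos and this colimit is \emph{not} formed on underlying sets. This is exactly the content of Kelly's density-presentation theorem, so in a full write-up I would either invoke that theorem directly or re-run its closure argument, using at each stage that the componentwise invertibility of $\kappa$ is stable under the weighted colimits in $\Phi$ because both functors preserve them.
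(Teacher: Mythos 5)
Your proposal is correct and follows essentially the same route as the paper: the paper's entire proof is to invoke Kelly's Theorem~5.29 on the density presentation of $K$ established in Example~\ref{E:density}(b) and Remark~\ref{R:dense-c}, which is exactly your reduction. The extra material you supply (the factorisation of $\Lan{K}{H}$ through $\wt{K}$ and the closure/propagation argument for the comparison $\kappa$) is just an unfolding of Kelly's theorem, which the paper cites rather than reproves.
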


Indeed, by~\cite{kelly:book}, Theorem~5.29, given a density presentation of $K : \Set_\fin \to \CPO$, strong finitarity means precisely preservation of all colimits in that presentation.
The same corollary also holds in $\DCPO$.

\begin{example}
	The categories $\Met$ and $\CMet$ have a density presentation of $K$ consisting of all directed diagrams and precongruences of finite spaces.
	Indeed, Condition~(1) follows from Examples~\ref{E:dir-met} and~\ref{E:power}~(1).
	For Condition~(2) observe that finite metric spaces are obtained from $\Set_\fin$ as colimits of precongruences by Lemma~\ref{P:prec}, and every (complete) metric space is a directed colimit of all of its finite subspaces in $\Met$ (or $\CMet$, resp.).
\end{example}

\begin{corollary}
	\label{C:coco}
	An endofunctor of $\Met$ or $\CMet$ is strongly finitary iff it preserves directed colimits and colimits of precongruences.
\end{corollary}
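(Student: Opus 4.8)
The plan is to argue exactly as for Corollary~\ref{C:density}, via Kelly's density machinery. The Example preceding this corollary established that $K : \Set_\fin \to \Met$ admits a density presentation whose colimits are all directed colimits together with the precongruences of finite metric spaces. By~\cite{kelly:book}, Theorem~5.29, an endofunctor $F$ of $\Met$ satisfies $F = \Lan{K}{(F \comp K)}$ --- that is, $F$ is strongly finitary --- precisely when $F$ preserves every colimit occurring in that presentation. Hence strong finitarity is equivalent to the conjunction ``$F$ preserves directed colimits and $F$ preserves the precongruences of finite spaces.'' It remains only to reconcile this with the statement of the corollary, i.e.\ to upgrade ``precongruences of finite spaces'' to ``colimits of arbitrary precongruences.'' One implication is immediate, since preserving all precongruences entails preserving those of finite spaces; so the content is the converse, for which I would show that for any $F$ preserving directed colimits, preservation of the finite precongruences already forces preservation of the precongruence $D_M$ of an arbitrary space $M$.

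The key point is that precongruences commute with directed colimits. I would write $M = \colim_{i \in I} M_i$ as the directed colimit of its finite (isometric) subspaces; by Example~\ref{E:dir-met} this colimit is formed as the union on underlying sets with unchanged distances. Because the discrete space $|M|$ equals the directed union of the $|M_i|$, and each discrete space $D_M\eps \subseteq |M| \times |M|$ equals the directed union of the $D_{M_i}\eps$ (any pair of distance at most $\eps$ lies in a common finite subspace), the precongruence $D_M$ is the directed colimit of the precongruences $D_{M_i}$, computed pointwise in $[\D_0,\Met]$.

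Now $F$ preserves directed colimits, so $F D_M = \colim_i F D_{M_i}$ pointwise, while preservation of finite precongruences gives $\Colim{W_0}{F D_{M_i}} = F M_i$. Since the weighted-colimit functor $\Colim{W_0}{\blank}$ is a left adjoint and hence preserves directed colimits,
\[
\Colim{W_0}{F D_M} = \colim_i \Colim{W_0}{F D_{M_i}} = \colim_i F M_i = F\big(\colim_i M_i\big) = FM,
\]
with the comparison being the canonical map, so by Proposition~\ref{P:prec} the functor $F$ preserves the colimit of $D_M$. This establishes the claimed equivalence, and the $\CMet$ case runs identically once ``union'' is replaced by ``dense union'' in the description of directed colimits, every complete space being the directed colimit in $\CMet$ of its finite subspaces. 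The only genuinely delicate step is verifying that $D_M$ really is the pointwise directed colimit of the $D_{M_i}$ in the enriched functor category; this rests on the concrete description of directed colimits in $\Met$ and $\CMet$ from Example~\ref{E:dir-met} and on the fact that every space $D_M\eps$ appearing in a precongruence is discrete. Everything else is a direct appeal to the density presentation supplied by the preceding Example and to Kelly's Theorem~5.29.
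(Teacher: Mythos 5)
Your proposal is correct and follows the paper's own route: the corollary is obtained by applying Kelly's Theorem~5.29 to the density presentation of $K:\Set_\fin\to\Met$ (resp.\ $\CMet$) by directed colimits and precongruences of finite spaces, exactly as for Corollary~\ref{C:density}. Your additional argument upgrading preservation of finite precongruences to preservation of arbitrary ones --- writing $D_M$ as the pointwise directed colimit of the $D_{M_i}$ over finite subspaces and using cocontinuity of $\Colim{W_0}{\blank}$ --- is sound and makes explicit a step the paper leaves implicit.
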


\begin{example}
	\label{E:sf}
	Every coproduct of endofunctors $(\blank)^n$ with $n$ finite on $\Met$, $\CMet$, $\CPO$ or $\DCPO$ is strongly finitary.
	Indeed, strongly finitary functors are closed under coproducts, which follows directly from the definition.
\end{example}

\section{Varieties of Quantitative Algebras}

We now prove that varieties of quantitative algebras on the categories $\Met$ and $\CMet$ bijectively correspond to \emph{strongly finitary monads}.
These are monads carried by a strongly finitary endofunctor.
Throughout this section $\Sigma = (\Sigma_n)_{n \in \Nat}$ denotes a signature, and $V$ is a specified countable set of variables.

\begin{notation}
	\phantom{phantom}
	\begin{enumerate}
		
		\item Following Mardare, Panangaden and Plotkin~\cite{mpp16}, a \emph{quantitative algebra} is a metric space $A$ endowed with a nonexpanding operation $\sigma_A : A^n \to A$ for every $\sigma \in \Sigma_n$ (w.r.t.\ the supremum metric (Example~\ref{E:smc})).
		We denote by $\Sigma\text{-}\Met$ the category of quantitative algebras and nonexpanding homomorphisms.
		Its forgetful functor is denoted by $U_\Sigma : \Sigma\text{-}\Met \to \Met$.
		
		\item Analogously, a \emph{complete quantitative algebra} is a quantitative algebra carried by a complete metric space.
		The category $\Sigma\text{-}\CMet$ is the corresponding full subcategory of $\Sigma\text{-}\Met$.
		We again use $U_\Sigma : \Sigma\text{-}\CMet \to \CMet$ for the forgetful functor.
		
		\item The underlying set of a metric space $M$ is denoted by $|M|$.
		
	\end{enumerate}
\end{notation}

\begin{example}
	\label{E:free-q}
	\begin{enumerate}
		
		\item A free quantitative algebra on a metric space $M$ is the usual algebra $T_\Sigma M$ of \emph{terms} on variables from $M$.
		That is, the smallest set containing $|M|$ and such that for every $n$-ary symbol $\sigma$ and every $n$-tuple of terms $t_i$ ($i < n$) we obtain a composite term $\sigma(t_i)_{i<n}$.
		To describe the metric, let us introduce the equivalence $\sim$ on $T_\Sigma M$ (\emph{similarity} of terms): it is the smallest equivalence making all variables of $|M|$ into one class, and such that $\sigma(t_i)_{i < n} \sim \sigma'(t_i')_{i < n}$ holds iff $\sigma = \sigma'$ and $t_i \sim t_i'$ for all $i < n$.
		The metric of $T_\Sigma M$ extends that of $M$ as follows: $d(t,t') = \infty$ if $t$ is not similar to $t'$.
		For similar terms $t = \sigma(t_i)$ and $t' = \sigma(t_i')$ we put $d(t,t') = \sup_{i < n} d(t_i,t_i')$.
		
		\item If $M$ is a complete space, $T_\Sigma M$ is also complete, and this is the free quantitative algebra on $M$ in $\Sigma\text{-}\CMet$.
		
	\end{enumerate}
\end{example}

In particular, consider the specified set $V$ of variables as a discrete metric space, then $T_\Sigma V$ is the discrete algebra of usual terms.
For every algebra $A$ and every interpretation of variables $f : V \to A$ (in $\Met$ or $\CMet$) we denote by $f^\sharp : T_\Sigma V \to A$ the corresponding homomorphism: it interprets terms in $A$.

\begin{definition}[\cite{mpp16}]
	By a \emph{quantitative equation} (aka $1$-basic quantitative equation) is meant a formal expression $t =_\eps t'$ where $t,t'$ are terms in $T_\Sigma V$ and $\eps \geq 0$ is a rational number.
	An algebra $A$ in $\Sigma\text{-}\Met$ (or $\Sigma\text{-}\CMet$) \emph{satisfies} that equation if for every interpretation $f : V \to A$ we have $d(f^\sharp(t), f^\sharp(t')) \leq \eps$.
	We write $t = t'$ in case $\eps = 0$.
	
	By a \emph{variety}, aka $1$-basic variety, of quantitative (or complete quantitative) algebras is meant a full subcategory of $\Sigma\text{-}\Met$ (or $\Sigma\text{-}\CMet$, resp.) specified by a set of quantitative equations.
\end{definition}

\begin{example}
	\phantom{phantom}
	\label{E:quant}
	\begin{enumerate}
		
		\item \emph{Quantitative monoids} are given by the usual signature: a binary symbol $\cdot$ and a constant $e$, and by the usual equations: $(x \cdot y) \cdot z = x \cdot (y \cdot z)$, $e \cdot x = x$, and $x \cdot e = x$.
		
		\item \emph{Almost commutative monoids} are quantitative monoids in which the distance of $ab$ and $ba$ is always at most $1$.
		They are presented by the quantitative equation $x \cdot y =_1 y \cdot x$.
	\end{enumerate}
\end{example}

\begin{proposition}[See~\cite{mpp16}]
	Every variety $\Vvar$ of quantitative algebras has free algebras: the forgetful funtor $U_\Vvar : \Vvar \to \Met$ has a left adjoint $F_\Vvar : \Met \to \Vvar$.
\end{proposition}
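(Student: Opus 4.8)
The plan is to build, for each metric space $M$, the free algebra $F_\Vvar M$ as a suitable metric quotient of the free quantitative algebra $T_\Sigma M$ of Example~\ref{E:free-q}, and then to verify its universal property; this simultaneously produces the left adjoint $F_\Vvar$ and its unit $\eta$. Write $d$ for the metric of $T_\Sigma M$ and $|T_\Sigma M|$ for its underlying set. Call a pseudometric $p$ on $|T_\Sigma M|$ \emph{admissible} if (i)~$p \le d$ pointwise, (ii)~every operation is nonexpanding for $p$, i.e.\ $p(\sigma(x_i)_{i<n}, \sigma(y_i)_{i<n}) \le \max_{i<n} p(x_i,y_i)$ for all $\sigma \in \Sigma_n$, and (iii)~for every interpretation $g : V \to T_\Sigma M$ and every equation $t =_\eps t'$ from the given presentation of $\Vvar$ one has $p(g^\sharp t, g^\sharp t') \le \eps$. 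The zero pseudometric is admissible, so the admissible pseudometrics form a nonempty set, and I would first check that they are closed under pointwise suprema in $[0,\infty]$: the triangle inequality and conditions (ii) and (iii) all pass to suprema by elementary estimates. Hence there is a largest admissible pseudometric $\wh{d} = \sup\{\, p : p \text{ admissible}\,\}$.

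Next I would let $F_\Vvar M$ be the metric space obtained from $(|T_\Sigma M|, \wh{d})$ by identifying points at $\wh{d}$-distance $0$, with quotient map $q : T_\Sigma M \to F_\Vvar M$. Since $\wh{d}$ makes every $\sigma$ nonexpanding, the operations descend along $q$ to nonexpanding operations, so $F_\Vvar M$ is a quantitative algebra and $q$ is a homomorphism; and since $\wh{d} \le d$, both $q$ and the composite $\eta_M : M \hookrightarrow T_\Sigma M \xrightarrow{q} F_\Vvar M$ are nonexpanding. To see $F_\Vvar M \in \Vvar$, take any interpretation $h : V \to F_\Vvar M$. As $V$ is discrete and $q$ is surjective, $h$ lifts to some $g : V \to T_\Sigma M$ with $q \comp g = h$, whence $h^\sharp = q \comp g^\sharp$ by freeness of $T_\Sigma V$; for an equation $t =_\eps t'$ the distance between $h^\sharp t$ and $h^\sharp t'$ in $F_\Vvar M$ equals $\wh{d}(g^\sharp t, g^\sharp t')$ and is therefore $\le \eps$ by condition (iii). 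Thus every presenting equation holds in $F_\Vvar M$.

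The universal property I would prove by a pullback-pseudometric argument. Let $A \in \Vvar$ and let $f : M \to U_\Vvar A$ be nonexpanding. By freeness of $T_\Sigma M$ in $\Sigma\text{-}\Met$, $f$ extends uniquely to a homomorphism $f^\sharp : T_\Sigma M \to A$. Consider $p_f(x,y) = d_A(f^\sharp x, f^\sharp y)$. I claim $p_f$ is admissible: (i)~holds because $f^\sharp$ is nonexpanding; (ii)~holds because $f^\sharp$ is a homomorphism and the operations of $A$ are nonexpanding; and (iii)~holds because for each $g : V \to T_\Sigma M$ the composite $f^\sharp \comp g^\sharp$ is the homomorphic extension of the interpretation $f^\sharp \comp g : V \to A$, and $A$, lying in $\Vvar$, satisfies $t =_\eps t'$. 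Therefore $p_f \le \wh{d}$, i.e.\ $f^\sharp$ is nonexpanding as a map $(|T_\Sigma M|, \wh{d}) \to A$ and is constant on $\wh{d}$-classes; it thus factors uniquely as $f^\sharp = \ol{f} \comp q$ for a homomorphism $\ol{f} : F_\Vvar M \to A$, and $\ol{f} \comp \eta_M = f$. Uniqueness of $\ol{f}$ follows since $q$ is surjective and any homomorphism extending $f$ along $\eta_M$ must agree with $f^\sharp$ on $T_\Sigma M$ by freeness. This establishes the adjunction $F_\Vvar \dashv U_\Vvar$.

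The step I expect to be the main obstacle is the bookkeeping around condition (iii): one must phrase satisfaction of the presenting equations in terms of \emph{all} substitution instances (interpretations $g : V \to T_\Sigma M$), check that this fully invariant condition survives pointwise suprema, and then match it precisely with satisfaction in the quotient via the lifting of interpretations along $q$. Everything else—existence of the largest admissible pseudometric and the factorization—reduces to routine estimates once admissibility has been set up correctly. (For an \emph{enriched} left adjoint one additionally verifies that the resulting bijection $\Vvar(F_\Vvar M, A) \cong \Met(M, U_\Vvar A)$ is an isometry, which is immediate from the construction.)
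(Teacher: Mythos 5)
Your construction is correct: the greatest admissible pseudometric $\wh{d}$ exists (pseudometrics on a fixed set closed under pointwise suprema, and each of your conditions (i)--(iii) is preserved by suprema), the quotient carries a well-defined quantitative algebra structure lying in $\Vvar$, and the pullback pseudometric $p_f$ argument gives exactly the required universal factorization; the final remark that the hom-bijection is an isometry needs only the standard induction on terms using nonexpansiveness of the operations, so the adjunction is indeed enriched. Note, however, that the paper does not prove this proposition at all --- it simply cites Mardare, Panangaden and Plotkin, where the free algebra is built syntactically as terms modulo the quantitative deduction system, with $d(t,t')$ the infimum of the $\eps$ for which $t =_\eps t'$ is derivable; your $\wh{d}$ is the semantic counterpart of that syntactic metric, obtained as a supremum of pseudometrics rather than via a proof calculus. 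A third route, which the paper uses for the analogous statement about continuous algebras (Proposition~\ref{P:free}), is to invoke a factorization system and general reflectivity results to get the left adjoint abstractly. Your approach buys a short, self-contained, and explicitly computable description of $F_\Vvar M$ at the cost of the bookkeeping you correctly identify around substitution instances; the citation and the adjoint-functor routes are shorter but either import an external deduction system or yield the free algebra non-constructively.
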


\begin{notation}
	We denote by $\Tmon_\Vvar$ the free-algebra monad of a variety $\Vvar$ on $\Met$.
	Its underlying functor is $T_\Vvar = U_\Vvar \comp F_\Vvar$.
\end{notation}

\begin{example}
	\label{E:ex}
	For $\Vvar = \Sigma\text{-}\Met$ we have seen the monad $T_\Sigma$ in Example~\ref{E:free-q}: $T_\Sigma M$ is the metric space of all terms over $M$.
	Observe that $T_\Sigma$ is a coproduct of endofunctors $(\blank)^n$, one summand for each similarity class of terms on $n$ variables over $M$ (which is independent of the choice $M$).
	Thus $\Tmon_\Sigma$ is a strongly finitary monad: see Example~\ref{E:sf}.
\end{example}

\begin{remark}
	\label{R:beck}
	\phantom{phantom}
	\begin{enumerate}
		
		\item Recall the comparison functor $K_\Vvar : \Vvar \to \Met^{\Tmon_\Vvar}$: it assigns to every algebra $A$ of $\Vvar$ the algebra on $U_\Vvar A$ for $\Tmon_\Vvar$ given by the unique homomorphism $\alpha: F_\Vvar U_\Vvar A \to A$ extending $\id_{U_\Vvar A}$.
		More precisely: $K_\Vvar A = (U_\Vvar A, U_\Vvar \alpha)$.
		
		\item By a \emph{concrete category} over $\Met$ is meant a category together with a faithful 'forgetful' functor $U_\Vvar : \Vvar \to \Met$.
		For example a variety, or $\Met^\Tmon$ for every monad $\Tmon$.
		A \emph{concrete functor} is a functor $F : \Vvar \to \Wvar$ with $U_\Vvar = U_\Wvar F$.
		For example, the comparison functor $K_\Vvar$.
		
	\end{enumerate}
\end{remark}

\begin{proposition}
	\label{P:beck}
	Every variety $\Vvar$ of quantitative algebras is concretely isomorphic to the category $\Met^{\Tmon_\Vvar}$: the comparison functor $K_\Vvar : \Vvar \to \Met^{\Tmon_\Vvar}$ is a concrete isomorphism.
\end{proposition}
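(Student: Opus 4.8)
The plan is to factor the problem through the signature case and then cut down by equations, keeping everything concrete (i.e.\ identical on underlying spaces) at each step, so that the resulting isomorphism of categories is automatically a concrete isomorphism and is seen to coincide with $K_\Vvar$.

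First I would treat the base case of an empty set of equations, establishing a concrete isomorphism $\Sigma\text{-}\Met \cong \Met^{\Tmon_\Sigma}$ for the free term monad of Example~\ref{E:free-q}. This is the classical description: a structure map $\beta : T_\Sigma B \to B$ restricts, on the flat terms $\sigma(x_0,\dots,x_{n-1})$, to a nonexpanding operation $\sigma_B : B^n \to B$, and conversely any family $(\sigma_B)$ extends uniquely to a homomorphic $\beta$; the unit and associativity laws of an Eilenberg--Moore algebra translate exactly into "$\beta$ is determined by the $\sigma_B$". Homomorphisms correspond on the nose. Since both forgetful functors send an object to its underlying space (namely $A \mapsto U_\Sigma A$ and $(B,\beta)\mapsto B$), this bijection is concrete.

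Next I would exhibit $\Tmon_\Vvar$ as a quotient of $\Tmon_\Sigma$. The free $\Vvar$-algebra $F_\Vvar M$ is the quotient of the term space $T_\Sigma M$ by the least nonexpanding congruence forcing the presenting equations, so the adjunction units induce a monad morphism $q : \Tmon_\Sigma \to \Tmon_\Vvar$ whose components $q_M : T_\Sigma M \to T_\Vvar M$ are surjective (in fact metric quotients, i.e.\ colimits in $\Met$). Restriction of structure along $q$ gives a functor $\Met^{\Tmon_\Vvar} \to \Met^{\Tmon_\Sigma}$, $(B,\beta)\mapsto (B,\beta\comp q_B)$, which is faithful and, because $q_B$ is epic, also full; its essential image is the full subcategory of those $\Sigma$-algebras whose structure map $T_\Sigma B \to B$ factors through $q_B$. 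Composing with the base-case isomorphism realizes $\Met^{\Tmon_\Vvar}$ concretely as a full subcategory of $\Sigma\text{-}\Met$.

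The core step is to identify this subcategory with $\Vvar$. Since $q_B$ is surjective, the structure map of a $\Sigma$-algebra $B$ factors through $q_B$ exactly when $B$ respects all identifications made by $T_\Vvar$, and, unwinding the metric quotient, this holds iff $d(f^\sharp(t),f^\sharp(t')) \le \eps$ for every presenting equation $t =_\eps t'$ and every interpretation $f : V \to B$ --- that is, iff $B$ lies in $\Vvar$. Because every functor in sight is the identity on underlying spaces and maps, the induced bijection on objects and full-and-faithfulness on morphisms assemble into a concrete isomorphism, and one checks directly that it sends $A$ to $(U_\Vvar A, U_\Vvar\alpha) = K_\Vvar A$ (Remark~\ref{R:beck}); faithfulness of $K_\Vvar$ is in any case automatic from faithfulness of $U_\Vvar$. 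I expect the main obstacle to be the faithful bookkeeping of the enriched structure in this core step: one must verify not merely that the correct terms are identified (distance~$0$) but that a factorization through the \emph{metric} quotient $q_B$ encodes precisely the quantitative constraints $d(f^\sharp(t),f^\sharp(t'))\le\eps$, using that $q_B$ is a colimit in $\Met$ and that the operations $\sigma_B$ are nonexpanding. By contrast, the upgrade from equivalence to \emph{concrete} isomorphism is immediate once every comparison has been arranged to act as the identity on underlying spaces.
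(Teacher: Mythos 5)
Your overall strategy is the right one and is essentially the proof the paper has in mind: the paper disposes of Proposition~\ref{P:beck} by remarking that the classical argument of Mac Lane (\cite{maclane:cwm}, Theorem~VI.8.1) carries over from $\Set$ to $\Met$, and what you have written out is exactly that argument --- first the concrete isomorphism $\Sigma\text{-}\Met\cong\Met^{\Tmon_\Sigma}$, then cutting down along the surjective monad morphism $q:\Tmon_\Sigma\to\Tmon_\Vvar$.

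One point needs correcting, and it is precisely the point you yourself single out as the main obstacle. The components $q_M : T_\Sigma M \to T_\Vvar M$ are surjective, but they are \emph{not} metric quotients (regular epimorphisms, conical colimits) in $\Met$: the metric on the free $\Vvar$-algebra is in general strictly smaller than the quotient metric induced by $q_M$. For the almost commutative monoids of Example~\ref{E:quant}, the terms $ab$ and $ba$ are not identified in $T_\Vvar M$, so the quotient metric would leave them at distance $\infty$, whereas $T_\Vvar M$ puts them at distance $1$; a conical coequalizer can only force distances to $0$, never down to a positive $\eps$ (this is exactly why the paper introduces the weighted colimits called precongruences). Consequently ``unwinding the metric quotient'' does not by itself yield the conditions $d(f^\sharp(t),f^\sharp(t'))\le\eps$. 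The correct justification of your core step is: if $\beta_0=\beta\comp q_B$, then using $q_B\comp T_\Sigma f=T_\Vvar f\comp q_V$ one gets $d(f^\sharp(t),f^\sharp(t'))\le d(q_V(t),q_V(t'))\le\eps$, because $\beta$ and $T_\Vvar f$ are nonexpanding and $T_\Vvar V$, lying in $\Vvar$, itself satisfies the presenting equations; conversely, if $B$ satisfies the equations, i.e.\ $B\in\Vvar$, then $\beta_0$ factors through $q_B$ by the freeness of $T_\Vvar B$ over $B$ within $\Vvar$ (equivalently, by the universal property of the reflection of $T_\Sigma B$ into $\Vvar$), not by any colimit property of $q_B$ in $\Met$. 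With that substitution your proof is complete and agrees with the paper's intended one.
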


\begin{proof}
	For classical varieties (over $\Set$) this is proved in~\cite{maclane:cwm}, Theorem~VI.8.1.
	The proof for $\Met$ in place of $\Set$ is analogous.
\end{proof}

\begin{notation}
	\label{N:subst}
	
	\begin{enumerate}
		
		\item Given a natural number $n$ denote by $[n]$ the signature of one $n$-ary symbol $\delta$.
		If a term $t \in T_\Sigma V$ contains at most $n$ variables (say, $x_0,\dots,x_{n-1}$), we obtain a monad morphism $\ol{t} : \Tmon_{[n]} \to \Tmon_\Sigma$ as follows.
		For every space $M$ the function $\ol{t}_M$ takes a term $s$ using the single symbol $\delta$ and substitutes each occurence of $\delta$ by $t(x_0,\dots,x_{n-1})$. 
		
		\item Every metric space $A$ defines a monad $\spitze{A,A}$ on $\Met$ assigning to $X \in \Met$ the space $\spitze{A,A}X = [[X,A],A]$.
		More precisely: the functor $[\blank,A] : \Met \to \Met^\op$ is self-adjoint, and $\spitze{A,A}$ is the monad corresponding to that adjunction.
		
		\item Let $\Tmon$ be a monad on $\Met$ and $\alpha : TA \to A$ an algebra for it.
		We denote by $\wh{\alpha}_X : TX \to \spitze{A,A}X$ the morphism adjoint to the following composite
		\[
		[X,A] \tensor TX \xrightarrow{T(\blank) \tensor TX} [TX,TA] \tensor TX \xrightarrow{\ev} TA \xrightarrow{\alpha} A.
		\]
		
	\end{enumerate}
\end{notation}

\begin{theorem}[\cite{dubuc}]
	Given an algebra $\alpha: TA \to A$ for a monad $\Tmon$ on $\Met$, the morphisms $\wh{\alpha}_X$ above form a monad morphism $\wh{\alpha} : \Tmon \to \spitze{A,A}$.
	Every monad morphism from $\Tmon$ to $\spitze{A,A}$ has that form for a unique $\alpha$.
\end{theorem}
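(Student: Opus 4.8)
The plan is to prove this by transposing everything across the dualizing adjunction, where the monad-morphism axioms become transparent and collapse onto the two algebra laws. This is the enriched analogue of the classical fact that actions of a monoid $M$ on a set $A$ are the same as monoid homomorphisms $M \to \mathrm{End}(A)$, with $\spitze{A,A}$ playing the role of the endomorphism object. Concretely, using the self-adjunction $[\blank,A] \dashv [\blank,A]$ together with the tensor--hom adjunction, a morphism $\beta_X : TX \to \spitze{A,A}X = [[X,A],A]$ transposes to a morphism $\theta_X : [X,A] \to [TX,A]$ with $\theta_X(g)(t) = \beta_X(t)(g)$, and this transposition is a bijection natural in $X$. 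Under it, $\beta$ is (enriched) natural iff $\theta$ is a natural transformation between the contravariant functors $[\blank,A]$ and $[T\blank,A]$. Unwinding the definition of $\wh{\alpha}$ in Notation~\ref{N:subst}(3) shows that $\wh{\alpha}$ corresponds precisely to $\theta_X(g) = \alpha \comp Tg$.

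First I would exploit naturality of $\theta$ to pin down its form. For any morphism $g : X \to A$ the naturality square of $\theta$ at $g$ reads $\theta_X \comp [g,A] = [Tg,A] \comp \theta_A$; evaluating both sides at $\id_A \in [A,A]$ and writing $\alpha := \theta_A(\id_A) : TA \to A$ gives
\[
\theta_X(g) = \alpha \comp Tg \qquad (g : X \to A).
\]
Thus every natural $\theta$, and hence every natural transformation $T \to \spitze{A,A}$, is automatically of the form $\wh{\alpha}$ for the single morphism $\alpha = \theta_A(\id_A)$. This delivers uniqueness of $\alpha$ immediately, and reduces the whole statement to checking that the two monad-morphism axioms for $\beta$ are equivalent to the two algebra axioms for $\alpha$.

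Next I would translate the axioms across the transposition. The unit axiom $\beta \comp \eta = \eta^{\spitze{A,A}}$ (where $\eta^{\spitze{A,A}}$ is the double-dual unit $d_X(x)(g) = g(x)$) transposes to $\theta_X(g) \comp \eta_X = g$; substituting $\theta_X(g) = \alpha \comp Tg$ and using naturality of $\eta$ turns this into $\alpha \comp \eta_A = \id_A$, the algebra unit law. The multiplication axiom is the main obstacle and the only genuine computation. I would first record the multiplication of $\spitze{A,A}$ explicitly: $\mu^{\spitze{A,A}}_X = [d_{[X,A]},A]$ is precomposition with the unit $d_{[X,A]} : [X,A] \to \spitze{A,A}[X,A]$. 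Transposing both sides of the square $\beta_X \comp \mu_X = \mu^{\spitze{A,A}}_X \comp \beta_{\spitze{A,A}X} \comp T\beta_X$ to maps $[X,A] \to [TTX,A]$ and evaluating at $g$, the left-hand side becomes $\alpha \comp \mu_A \comp TTg$ (by naturality of $\mu$), while the right-hand side, after using functoriality of $T$ and the identity $\mathrm{ev}_g \comp \beta_X = \alpha \comp Tg$, becomes $\alpha \comp T\alpha \comp TTg$.

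Hence the multiplication axiom holds for all $X$ and $g$ iff $\alpha \comp \mu_A = \alpha \comp T\alpha$, which is associativity. Reading all these equivalences in both directions shows that $\wh{\alpha}$ is a monad morphism exactly when $\alpha$ is a $\Tmon$-algebra, and that every monad morphism $\Tmon \to \spitze{A,A}$ arises from a unique such $\alpha$, namely $\alpha = \theta_A(\id_A)$. I expect the only delicate point to be the explicit identification of $\mu^{\spitze{A,A}}$ and the bookkeeping of the enriched transposes in the multiplication square; everything else is forced by the naturality identity above.
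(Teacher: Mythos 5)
Your proposal is correct. The paper itself offers no proof of this statement --- it is quoted from Dubuc's monograph --- so there is nothing to compare against line by line; but your argument is the standard (and sound) one: transpose $\beta_X : TX \to [[X,A],A]$ to $\theta_X : [X,A] \to [TX,A]$, use naturality of $\theta$ at $g : X \to A$ evaluated on $\id_A$ to force $\theta_X(g) = \alpha \comp Tg$ with $\alpha = \theta_A(\id_A)$ (whence uniqueness), and then check that the unit and multiplication squares for $\beta$ transpose exactly to $\alpha \comp \eta_A = \id_A$ and $\alpha \comp \mu_A = \alpha \comp T\alpha$. Your identification $\mu^{\spitze{A,A}}_X = [d_{[X,A]},A]$ and the key step $\ev_g \comp \beta_X = \alpha \comp Tg$ are both right. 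The only point worth making explicit is the enriched bookkeeping: one should note that over $\Met$ enriched naturality reduces to ordinary naturality with nonexpanding components (as the paper records in its remark on natural transformations), and that each $\theta_X$ is indeed nonexpanding because $T$ is locally nonexpanding and composition in $\Met$ is nonexpanding; with that observation your transposition argument goes through verbatim.
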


\begin{lemma}
	\label{L:equa}
	Let $A$ be a $\Sigma$-algebra expressed by $\alpha : T_\Sigma A \to A$ in $\Met^{\Tmon_\Sigma}$.
	It satisfies a quantitative equation $l =_\eps r$ iff the distance of $\wh{\alpha} \comp \ol{l}, \wh{\alpha} \comp \ol{r} : \Tmon_{[n]} \to \spitze{A,A}$ is at most $\eps$.
\end{lemma}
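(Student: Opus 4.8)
The plan is to evaluate both monad morphisms on the single generating $n$-ary operation of $\Tmon_{[n]}$ and compare the two resulting operations on $A$; the equivalence then drops out of the definition of satisfaction. First I would reduce the "distance of monad morphisms" to a distance of operations. By the theorem of Dubuc quoted above, monad morphisms $\Tmon_{[n]} \to \spitze{A,A}$ correspond bijectively to $\Tmon_{[n]}$-algebra structures on $A$; since $[n]$ has a single $n$-ary symbol $\delta$, these are exactly the nonexpanding maps $A^n \to A$, i.e.\ the points of $[A^n,A]$ (which is $\spitze{A,A}$ applied to the discrete space $n$). Because $\Tmon_{[n]}$ is the free monad on one $n$-ary operation, a monad morphism out of it is the same as a natural transformation $(\blank)^n \to \spitze{A,A}$, and the distance of two such morphisms is the distance of the $n$-ary operations $\omega_l,\omega_r \in [A^n,A]$ they pick out, namely $\sup_{\vec a \in A^n} d(\omega_l(\vec a),\omega_r(\vec a))$ in the supremum metric.

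Next I would identify these operations. Unwinding Notation~\ref{N:subst}(3), the component $\wh\alpha_X$ sends $z \in T_\Sigma X$ to the map $h \mapsto \alpha(T_\Sigma h(z)) = h^\sharp(z)$, the homomorphic extension of $h : X \to A$ evaluated at $z$. Since $\ol l$ is the monad morphism whose $A$-component substitutes $\delta$ by $l(x_0,\dots,x_{n-1})$, the composite $\wh\alpha \comp \ol l$ corresponds under Dubuc's bijection to the algebra structure $\alpha \comp (\ol l)_A : T_{[n]}A \to A$, whose $\delta$-operation is $\vec a \mapsto \alpha(l(\vec a)) = l_A(\vec a)$, the term operation of $l$ on $A$. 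Hence $\omega_l = l_A$ and, symmetrically, $\omega_r = r_A$, so the distance in the statement equals $\sup_{\vec a \in A^n} d(l_A(\vec a), r_A(\vec a))$.

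Finally I would match this with satisfaction. As $V$ is discrete, every interpretation $f : V \to A$ is nonexpanding, its restriction to $x_0,\dots,x_{n-1}$ ranges over all of $A^n$, and $f^\sharp(l) = l_A(f(x_0),\dots,f(x_{n-1}))$, similarly for $r$. Thus the condition "$A$ satisfies $l =_\eps r$", i.e.\ $d(f^\sharp(l),f^\sharp(r)) \le \eps$ for every $f$, is literally $\sup_{\vec a \in A^n} d(l_A(\vec a), r_A(\vec a)) \le \eps$, which by the previous paragraph is exactly the assertion that the distance of $\wh\alpha \comp \ol l$ and $\wh\alpha \comp \ol r$ is at most $\eps$. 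This gives both implications simultaneously.

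The hard part will be the first step: justifying that the distance of the two monad morphisms is computed on their generating $n$-ary operations (equivalently in $[A^n,A]$) rather than naively as the supremum over all components of the endofunctor maps $T_{[n]}X \to \spitze{A,A}X$. The latter would probe nested occurrences of $\delta$, along which the one-step gap between $l_A$ and $r_A$ accumulates, so it is not the right metric; the correct reading uses freeness of $\Tmon_{[n]}$ on a single $n$-ary operation to see that only the arity-$n$ part matters, which is precisely what keeps the constant $\eps$ intact. The remaining ingredients — the identity $\wh\alpha_X(z)(h) = h^\sharp(z)$ and the computation of the $\delta$-operation of $\alpha \comp (\ol l)_A$ — are routine once the operations are isolated.
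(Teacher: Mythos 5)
You have correctly isolated the crux, and the computational steps surrounding it are fine: $\wh{\alpha}_X(z)(h)=h^\sharp(z)$, the $\delta$-operation of $\alpha\comp(\ol{l})_A$ is the term operation $l_A$, and satisfaction of $l=_\eps r$ is literally $\sup_{\vec a\in A^n}d(l_A(\vec a),r_A(\vec a))\le\eps$. But the step you defer as ``the hard part'' is the entire content of the lemma, and the justification you sketch for it does not hold up. Freeness of $\Tmon_{[n]}$ on one $n$-ary operation gives a \emph{bijection} between monad morphisms $\Tmon_{[n]}\to\spitze{A,A}$ and points of $[A^n,A]$; it does not make that bijection an isometry for the metric the paper puts on natural transformations, namely $\sup_X d(\tau_X,\tau'_X)$. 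The free-monad adjunction is not enriched in that sense: already for one unary symbol, two monad morphisms out of $\Tmon_{[1]}$ induced by transformations at distance $\delta$ have components at the term $\delta^k(x)$ differing by as much as $k\delta$.

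Under that componentwise reading your reduction, and with it the ``only if'' direction, actually fails. Take one binary symbol, $l=x_0\cdot x_1$, $r=x_1\cdot x_0$, and let $A=\{0,\eps,2\eps\}$ carry the metric inherited from the real line and the operation $a\cdot b=\min(a,b+\eps)$. This operation is nonexpanding and $A$ satisfies $x_0\cdot x_1=_\eps x_1\cdot x_0$; yet at the nested term $\delta(x,\delta(y,w))$ the composites $\wh{\alpha}\comp\ol{l}$ and $\wh{\alpha}\comp\ol{r}$ send the interpretation $(x,y,w)\mapsto(2\eps,2\eps,0)$ to $2\eps$ and to $0$ respectively, so their componentwise distance is at least $2\eps$. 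Hence either the hom-objects of $\Mon_\fin(\Met)$ must be metrized differently (computed on generators), in which case that isometry is precisely what has to be stated and proved, since it is exactly what keeps the constant $\eps$ from degrading; or one must argue that only the depth-one component ever matters. Your proposal supplies neither, so the equivalence is not established. (The direction ``distance $\le\eps$ implies satisfaction'' does survive, by evaluating at the generating term alone.)
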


\begin{notation}
	The category of finitary monads on $\Met$ (and monad morphisms) is denoted by $\Mon_\fin(\Met)$.
	Its full subcategory of strongly finitary monads by $\Mon_\sf(\Met)$.
	We also denote by $\Mon_c(\Met)$ the category of monads preserving countably directed colimits.
\end{notation}

\begin{lemma}
	\label{L:cl}
	The category $\Mon_\fin(\Met)$ has weighted colimits, and $\Mon_\sf(\Met)$ is closed under them.
\end{lemma}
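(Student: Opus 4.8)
The plan is to realize $\Mon_\fin(\Met)$ as the $\V$-category of monoids for the substitution monoidal structure $(\mathcal F, \comp, \mathrm{Id})$, where $\mathcal F$ is the $\V$-category of finitary endofunctors of $\Met$, and then to obtain its weighted colimits from colimits in $\mathcal F$. Two structural facts drive everything. First, a pointwise colimit of finitary functors is again finitary (colimits commute with colimits), so $\mathcal F$ is cocomplete with all weighted colimits formed pointwise; equivalently $\mathcal F$ is the $\V$-functor category on the finite metric spaces. Second, for the monoidal product $\comp$, the functor $(\blank)\comp G$ is cocontinuous for each $G$ (colimits of functors are pointwise), whereas $F\comp(\blank)$ preserves directed colimits for each finitary $F$ — this is exactly finitarity of $F$.

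For the existence half I would proceed by monadicity. The forgetful $\V$-functor $U\colon \Mon_\fin(\Met)\to\mathcal F$ has a left adjoint sending $H$ to the free monad $H^{*}$, built by the standard transfinite construction \cite{kelly:book} as the colimit of the $\omega$-chain $\mathrm{Id},\ \mathrm{Id}+H,\ \mathrm{Id}+H(\mathrm{Id}+H),\dots$; since $H\comp(\blank)$ preserves this countable directed colimit, the chain converges at $\omega$ and $H^{*}$ is again finitary, so the induced free-monoid monad $\mathbb F$ on $\mathcal F$ is itself finitary. As $\Mon_\fin(\Met)$ is the category of $\mathbb F$-algebras over the cocomplete $\V$-category $\mathcal F$, it is cocomplete, i.e.\ it has all weighted colimits; concretely one reduces weighted colimits to copowers and conical colimits, and the latter to coproducts and reflexive coequalizers, each produced by the same free-monad and transfinite machinery. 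The key point for what follows is that in every one of these constructions the underlying endofunctor of the resulting monad is assembled from the underlying endofunctors of the diagram by iterated conical colimits, copowers and composites $\comp$ in $\mathcal F$.

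The closure statement then reduces to three closure properties of the class of strongly finitary endofunctors. Closure under composites: by Corollary~\ref{C:coco} a functor is strongly finitary iff it preserves directed colimits and colimits of precongruences, and preservation of a fixed class of colimits is stable under composition, so $F\comp G$ preserves both whenever $F$ and $G$ do. Closure under conical colimits and copowers: if $F=\colim_j F_j$ is formed pointwise from strongly finitary $F_j$, then for any directed colimit or precongruence colimit $C=\Colim{W}{D}$ in $\Met$ one has $FC=\colim_j F_j C=\colim_j \Colim{W}{F_j D}=\Colim{W}{\colim_j F_j D}=\Colim{W}{FD}$, the middle step being the interchange of colimits with colimits, so $F$ is strongly finitary. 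Closure under coproducts is Example~\ref{E:sf}. Combining these with the functorial description of the colimit monads (and noting $\mathrm{Id}$ is strongly finitary), a weighted colimit in $\Mon_\fin(\Met)$ of strongly finitary monads is again strongly finitary; in particular each intermediate stage $W_n$ of the free-monad chain stays strongly finitary, whence so does its colimit. Thus $\Mon_\sf(\Met)$ is closed under the weighted colimits of $\Mon_\fin(\Met)$.

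The main obstacle is the existence half, and specifically the behaviour of substitution in its right-hand variable. While $F\comp(\blank)$ preserves directed colimits, it need not preserve the reflexive coequalizers used to build conical colimits, so the forgetful functor $U$ does \emph{not} create them and there is no naive pointwise formula for the underlying endofunctor of a colimit of monads. This is precisely why I route the argument through the finitary free-monoid monad $\mathbb F$ and the cocompleteness of its category of algebras: the technical heart is to verify that $\mathbb F$ is finitary (so that $\Mon_\fin(\Met)$ inherits all weighted colimits) and that the transfinite constructions express the colimit monads through colimits, copowers and composites in $\mathcal F$, which is exactly the description the closure argument consumes.
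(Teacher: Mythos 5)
Your route is genuinely different from the paper's. The paper embeds $\Mon_\fin(\Met)$ into the category $\Mon_c(\Met)$ of monads preserving countably directed colimits, observes that the latter is locally countably presentable as an enriched category (hence has weighted colimits), and that both $\Mon_\fin(\Met)$ and $\Mon_\sf(\Met)$ are coreflective in it, the coreflections being given by $\Lan{K}{(T \comp K)}$; coreflective subcategories are closed under whatever colimits exist, which yields both halves of the lemma at once. You instead work downward, realizing finitary monads as monoids for substitution on the category $\mathcal{F}$ of finitary endofunctors and invoking the free-monad and transfinite machinery. That strategy is classical and viable, and it has the merit of exhibiting the colimit monads explicitly as assembled from conical colimits, copowers and composites, which is exactly what your closure argument consumes; note that the paper's coreflectivity argument also tacitly needs the composition-closure fact discussed below, since otherwise $\Lan{K}{(T\comp K)}$ would not carry a monad structure.

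Two points need repair. First, the parenthetical claim that $\mathcal{F}$ is "the $\V$-functor category on the finite metric spaces" is false: finite metric spaces with finite nonzero distances are not finitely presentable in $\Met$. For $X=\{a,b\}$ with $d(a,b)=1$ and the directed diagram of two-point spaces at distance $1+1/n$ (identity connecting maps), whose colimit is $X$, the identity of $X$ is not in the image of $\colim_n\Met(X,\blank)$. This is precisely why the density presentation in the paper uses finite \emph{discrete} spaces together with precongruences, and why finitary is strictly weaker than strongly finitary here; the remark is inessential to your argument (pointwise colimits of finitary functors being finitary is all you use), but it should be deleted. Second, and more seriously, your justification of closure of strongly finitary endofunctors under composition does not work over $\Met$: the principle "preservation of a fixed class of colimits is stable under composition" requires that the functors carry colimits of that class to colimits of that class. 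A functor sends directed diagrams to directed diagrams and reflexive pairs to reflexive pairs (which is why the argument is unproblematic over $\CPO$), but a strongly finitary $G$ sends the precongruence diagram $D_M$ to a $W_0$-weighted diagram $GD_M$ that is not the precongruence of any space, so "$F$ preserves colimits of precongruences" says nothing about $\Colim{W_0}{GD_M}$. The fact you need is true, but it has to be established through the Kan-extension characterization and the theory of density presentations (one must show that $F$ preserves the canonical colimits $GX \cong \Colim{\wt{K}X}{GK}$, which amounts to these colimits being preserved by all finite powers $(\blank)^n$, i.e.\ by $\wt{K}$), not by the quoted one-liner. Since composition-closure is load-bearing for you --- already the free-monad chain uses $H \comp W_n$ --- this gap must be filled before the closure half of the lemma stands.
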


\begin{proof}[Proof sketch]
	\begin{enumerate}
		
		\item The category $\Mon_c(\Met)$ is locally countably presentable as an enriched category, thus it has weighted colimits, and
		
		\item both $\Mon_\fin(\Met)$ and $\Mon_\sf(\Met)$ are coreflective subcategories of $\Mon_c(\Met)$.
		The coreflection of a countably accessible monad $\Tmon$ in $\Mon_\sf(\Met)$ is given by the left Kan extension $\Lan{K}{(T \comp K)}$, analogously for $\Mon_\fin(\Met)$.
		
	\end{enumerate}
\end{proof}

\begin{lemma}
	\label{L:help2}
	Every monad morphism $\alpha: \Tmon_\Sigma \to \Smon$ in the category $\Mon_\fin(\Met)$ factorizes as a morphism $\Tmon_\Sigma \to \ol{\Smon}$ with surjective components followed by a morphism $\ol{\Smon} \to \Smon$ whose components are isometric embeddings.
\end{lemma}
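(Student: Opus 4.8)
The plan is to lift the \emph{(surjective, isometric embedding)} factorization system from $\Met$ to the category of finitary monads. On $\Met$ let $\mathcal{E}$ be the class of surjective nonexpanding maps and $\mathcal{M}$ the class of isometric embeddings. Every nonexpanding $f : X \to Y$ factorizes as the corestriction $X \to f[X]$ in $\mathcal{E}$ followed by the subspace inclusion $f[X] \hookrightarrow Y$ in $\mathcal{M}$, where $f[X]$ carries the metric inherited from $Y$. This is an orthogonal factorization system: maps in $\mathcal{E}$ are epimorphisms, maps in $\mathcal{M}$ are monomorphisms, and in any commutative square with left leg $e \in \mathcal{E}$ and right leg $m \in \mathcal{M}$ the diagonal is well defined (since $m$ is injective) and nonexpanding (since $m$ is isometric). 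I would first record this, together with the naturality of the diagonal fill-in, so that the system lifts componentwise to locally nonexpanding endofunctors and to monads.

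Given $\alpha : \Tmon_\Sigma \to \Smon$, I would define the intermediate functor pointwise by $\ol{S}X = \alpha_X[T_\Sigma X] \subseteq SX$ with the subspace metric, and let $e_X : T_\Sigma X \to \ol{S}X$ be $\alpha_X$ corestricted to its image and $m_X : \ol{S}X \hookrightarrow SX$ the inclusion, so that $\alpha = m \comp e$ with $e$ in $\mathcal{E}$ and $m$ in $\mathcal{M}$ componentwise. Naturality of $\ol{S}$ on a morphism $f$ follows from naturality of $\alpha$, since $Sf$ then restricts to a map $\ol{S}X \to \ol{S}Y$. The functor $\ol{S}$ is locally nonexpanding: for $g,g' : X \to Y$ and $z \in \ol{S}X$ one has $d(\ol{S}g(z),\ol{S}g'(z)) = d(Sg(m_Xz),Sg'(m_Xz))$ because $m$ is isometric, and the right-hand side is at most $d(Sg,Sg') \le d(g,g')$ since $S$ is locally nonexpanding.

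The crucial step is that $\ol{S}$ is \emph{finitary}. Here I would use the characterization of colimit cocones of a directed diagram $(D_i)$ in $\Met$ from \cite{adamek+rosicky:approximate-injectivity}, Lemma~2.4 (joint surjectivity (a) together with the infimum formula (b) for distances), recalled in the proof that directed colimits commute with finite products. Since $S$ is finitary, $SC = \colim_i SD_i$ and its cocone $Sc_i$ satisfies (a) and (b). For $\ol{S}$ I would verify both directly. Joint surjectivity: any $z = \alpha_C(t) \in \ol{S}C$ has $t = T_\Sigma c_i(t_i)$ for some $i$, because the colimit cocone of the finitary $T_\Sigma$ satisfies (a); then $z = \ol{S}c_i(\alpha_{D_i}(t_i))$ with $\alpha_{D_i}(t_i) \in \ol{S}D_i$. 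The infimum formula for $\ol{S}$ is obtained by reading off (b) for $S$ along the isometric inclusions $m$. Hence $\ol{S}C = \colim_i \ol{S}D_i$, so $\ol{S}$ is finitary. This is the main obstacle; the rest is the routine lifting of a factorization system to monads.

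Finally I would equip $\ol{S}$ with a monad structure making $e$ and $m$ monad morphisms. The unit $\eta^{S} = \alpha \comp \eta^{T_\Sigma}$ factors through each image, giving $\ol{\eta}$ with $m \comp \ol{\eta} = \eta^{S}$ and $\ol{\eta} = e \comp \eta^{T_\Sigma}$. For the multiplication I first note that $\ol{S}$ preserves surjections: if $q$ is surjective then so is $T_\Sigma q$ (as $T_\Sigma$ is a coproduct of powers, Example~\ref{E:ex}), and naturality of $\alpha$ then forces $\ol{S}q$ to be surjective. Consequently the horizontal composite $e^{(2)} : T_\Sigma T_\Sigma \to \ol{S}\,\ol{S}$ with $X$-component $\ol{S}(e_X) \comp e_{T_\Sigma X}$ lies componentwise in $\mathcal{E}$, while $m^{(2)} : \ol{S}\,\ol{S} \to SS$ with $X$-component $S(m_X) \comp m_{\ol{S}X}$ lies in $\mathcal{M}$ and satisfies $m^{(2)} \comp e^{(2)} = \alpha \ast \alpha$. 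Because $\alpha$ is a monad morphism, the square with top $e \comp \mu^{T_\Sigma}$, left $e^{(2)}$, right $m$ and bottom $\mu^{S} \comp m^{(2)}$ commutes, so orthogonality supplies a unique diagonal $\ol{\mu} : \ol{S}\,\ol{S} \to \ol{S}$ (natural by uniqueness) with $\ol{\mu} \comp e^{(2)} = e \comp \mu^{T_\Sigma}$ and $m \comp \ol{\mu} = \mu^{S} \comp m^{(2)}$. The monad laws for $(\ol{S},\ol{\eta},\ol{\mu})$ follow from those for $\Tmon_\Sigma$ by precomposing with the epic comparison transformations and using that $m$ is monic. This yields the required factorization $\Tmon_\Sigma \xrightarrow{e} \ol{\Smon} \xrightarrow{m} \Smon$ in $\Mon_\fin(\Met)$ with $e$ surjective and $m$ an isometric embedding componentwise. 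The $\CMet$ case is analogous, using the dense-image version of the cocone characterization.
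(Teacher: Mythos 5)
Your proof is correct and follows exactly the route the paper itself signals: the extended abstract omits the proof of this lemma (deferring to the full version), but the remark after Theorem~\ref{T:main-c} identifies the (surjective, isometric embedding) factorization system of $\Met$ as the relevant tool, and your componentwise image factorization, the finitarity check for $\ol{\Smon}$ via the colimit-cocone characterization of \cite{adamek+rosicky:approximate-injectivity}, and the diagonal fill-in defining $\ol{\mu}$ constitute the expected argument. One harmless caveat: your side claim that $m^{(2)} = Sm \comp m\ol{S}$ lies in $\mathcal{M}$ would require $S$ to preserve isometric embeddings, which is not automatic for a locally nonexpanding functor; fortunately the lifting only needs $e^{(2)} \in \mathcal{E}$ and $m \in \mathcal{M}$, so nothing in your argument depends on it.
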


\begin{theorem}
	\label{T:var-mon}
	For every variety $\Vvar$ of quantitative algebras the free-algebra monad $\Tmon_\Vvar$ is strongly finitary.
\end{theorem}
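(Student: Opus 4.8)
The plan is to realize $\Tmon_\Vvar$ as a weighted colimit of strongly finitary monads and then invoke the closure property of Lemma~\ref{L:cl}. Fix a presentation of $\Vvar$ by the signature $\Sigma$ and a set $E$ of quantitative equations. Each $e \in E$ has the form $l =_{\eps_e} r$ with $l,r$ using at most $n_e$ variables, and so (Notation~\ref{N:subst}) determines a parallel pair of monad morphisms $\ol{l}, \ol{r} : \Tmon_{[n_e]} \to \Tmon_\Sigma$. Recall from Example~\ref{E:ex} that $\Tmon_\Sigma$ is strongly finitary, and, being a term monad for a single operation, so is each $\Tmon_{[n_e]}$.

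First I would assemble a single weighted diagram in $\Mon_\fin(\Met)$ with apex $\Tmon_\Sigma$ and, for each $e \in E$, a copy of the pair $(\ol{l},\ol{r})$ weighted by a gadget modelled on the basic weight $W_0$ at the value $\eps_e$. The effect of this weight is that a monad morphism out of $\Tmon_\Sigma$ descends to the colimit exactly when it sends $\ol{l}$ and $\ol{r}$ to morphisms of distance at most $\eps_e$, for every $e$ — the monad-level analogue of the precongruence condition. Let $\Smon$ denote this weighted colimit, formed in $\Mon_\fin(\Met)$, with colimit cocone $c : \Tmon_\Sigma \to \Smon$. By Lemma~\ref{L:cl} the colimit exists, and since $\Mon_\sf(\Met)$ is closed under weighted colimits in $\Mon_\fin(\Met)$ while the whole diagram consists of strongly finitary monads, $\Smon$ is strongly finitary.

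It then remains to identify $\Smon$ with $\Tmon_\Vvar$. By Dubuc's theorem an $\Smon$-algebra on a space $A$ is the same as a monad morphism $\Smon \to \spitze{A,A}$; precomposition with $c$ presents it as a monad morphism $\Tmon_\Sigma \to \spitze{A,A}$, i.e.\ (again by Dubuc) a $\Sigma$-algebra $\alpha$ on $A$, and the constraint built into the weight says precisely that $d(\wh{\alpha}\comp\ol{l}, \wh{\alpha}\comp\ol{r}) \le \eps_e$ for each $e$, which by Lemma~\ref{L:equa} means that $A$ satisfies every equation of $E$. Thus the $\Smon$-algebras carried by $A$ are exactly the objects of $\Vvar$ carried by $A$; that this matches homomorphisms and that the restriction $\Met^\Smon \to \Sigma\text{-}\Met$ along $c$ is a full embedding onto $\Vvar$ can be controlled by taking $c$ with surjective components via the factorization of Lemma~\ref{L:help2}. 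This yields a concrete isomorphism $\Met^\Smon \cong \Vvar \cong \Met^{\Tmon_\Vvar}$ (the last step by Proposition~\ref{P:beck}), and a concrete isomorphism of Eilenberg--Moore categories forces the two monads to coincide, so $\Tmon_\Vvar \cong \Smon$ is strongly finitary.

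I expect the main obstacle to be the final identification, and specifically the direction producing an $\Smon$-algebra from a given $\Vvar$-object: this demands the universal factorization of a constraint-respecting monad morphism $\Tmon_\Sigma \to \spitze{A,A}$ through the cocone $c$, whereas the universal property furnished by the construction lives a priori only in $\Mon_\fin(\Met)$ while the target $\spitze{A,A}$ is typically not finitary. I would resolve this by showing that the $\Mon_\fin(\Met)$-colimit $\Smon$ is in fact the colimit of the same diagram in the category of all $\Met$-enriched monads: the coreflectivity of $\Mon_\sf(\Met)$ in $\Mon_c(\Met)$ from Lemma~\ref{L:cl}, together with the strong finitarity of every monad occurring in the diagram, should guarantee that forming the colimit commutes with the inclusions, so that the required factorization is available against an arbitrary target monad, $\spitze{A,A}$ included.
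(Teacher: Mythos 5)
Your proposal follows essentially the same route as the paper's proof: the same weighted diagram with apex $\Tmon_\Sigma$ and one pair $\ol{l},\ol{r} : \Tmon_{[n_e]} \to \Tmon_\Sigma$ per equation weighted in the style of $W_0$ at value $\eps_e$, the same appeal to Lemma~\ref{L:cl} for closure of $\Mon_\sf(\Met)$ under weighted colimits, the same identification of the colimit's algebras with $\Vvar$ via Dubuc's theorem and Lemma~\ref{L:equa}, and the same use of Lemma~\ref{L:help2} to get surjective components of the cocone. The subtlety you flag about the universal property against the non-finitary monad $\spitze{A,A}$ is genuine, and your proposed resolution (the colimit is preserved by the coreflective inclusions, hence computed as in the ambient category of monads) is in line with how the argument is completed in the full version.
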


\begin{proof}[Proof sketch]
	\phantom{phantom}
	\begin{enumerate}
		
		\item Let $\Vvar$ be given by a signature $\Sigma$ and quantitative equations $l_i =_\eps r_i$ ($i \in I$), each containing $n_i$ variables.
		For every $i \in I$ we consider the signature $[n(i)]$ of one symbol $\delta_i$ of arity $n(i)$, then the terms $l_i,r_i$ yield the corresponding monad morphisms $\ol{l}_i, \ol{r}_i : \Tmon_{[n(i)]} \to \Tmon_\Sigma$ of Notation~\ref{N:subst}.
		An algebra $\alpha: T_\Sigma A \to A$ lies in $\Vvar$ iff the distance of $\wh{\alpha} \comp \ol{l}_i, \wh{\alpha} \comp \ol{r}_i : \Tmon_{[n(i)]} \to \spitze{A,A}$ is at most $\eps_i$ for each $i$ (Lemma~\ref{L:equa}).
		
		\item We verify that $\Tmon_\Vvar$ is a weighted colimit of strongly finitary monads in $\Mon_\fin(\Met)$.
		The domain $\D$ of the weighted diagram $D : \D \to \Mon_\fin(\Met)$ is the discrete category $I$ (indexing the equations) enlarged by a new object $a$, and by morphisms $\lambda_i,\rho_i : i \to a$ for every $i \in I$.
		Then put $D_i = \Tmon_{[n(i)]}$ and $Da = \Tmon_\Sigma$; further $D \lambda_i = \ol{l}_i$ and $D \rho_i = \ol{r}_i$.
		The weight $W : \D^\op \to \Met$ takes $i$ to the space $\{ l, r \}$ with $d(l,r) = \eps_i$ and $a$ to $\{ 0 \}$.
		We define $W\lambda_i(0) = l$ and $W \rho_i(0) = r$.
		The monads $\Tmon_\Sigma$ and $\Tmon_{[n(i)]}$ are strongly finitary by Example~\ref{E:ex}.
		This will finish the proof by Lemma~\ref{L:cl}.
		
		We denote by $\Tmon$ the weighted colimit $\Tmon = \Colim{W}{D}$ in $\Mon_\fin(\Met)$.
		The proof is concluded by proving that $\Vvar$ is isomorphic, as a concrete category, to the category $\Met^\Tmon$ of algebras for $\Tmon$.
		Then $\Tmon$ is the free-algebra monad of $\Vvar$.
		For $\Tmon$ we have the unit $\gamma: W \to \llbracket D \blank, \Tmon \rrbracket$ of the weighted colimit $\Tmon = \Colim{W}{D}$ (Definition~\ref{D:wc}).
		Its component $\nu_a$ assigns to $0$ a monad morphism $\gamma = \nu_a(0) : \Tmon_\Sigma \to \Tmon$, whereas for $i \in I$ the component $\nu_i$ is given by $l \mapsto \gamma \comp \ol{l}_i$ and $r \mapsto \gamma \comp \ol{r}_i$.
		Since $\nu_i$ is nonexpanding, we conclude that $\gamma \comp \ol{\lambda}_i, \gamma \comp \ol{\rho}_i: \Tmon_{[n(i)]} \to \Tmon$ have distance at most $\eps_i$.
		We thus obtain a functor $E : \Met^\Tmon \to \Vvar$ assigning to every algebra $\alpha : TA \to A$ the $\Sigma$-algebra corresponding to $\alpha \comp \gamma_A : T_\Sigma A \to A$: it satisfies $l_i =_{\eps_i} r_i$ due to $d(\gamma \comp \ol{\lambda}_i, \gamma \comp \ol{\rho}_i) \leq \eps_i$.
		Moreover, $\gamma$ has surjective components, which can be derived from Lemma~\ref{L:help2}.
		Therefore, $E$ is a concrete isomorphism, which concludes the proof.
		
	\end{enumerate}
\end{proof}

\begin{construction}
	\label{C:eq-q}
	In the reverse direction we assign to every strongly finitary monad $\Tmon = (T,\mu,\eta)$ on $\Met$ a variety $\Vvar_\Tmon$, and prove that $\Tmon$ is its free-algebra monad.
	For every morphism $k : X \to Y$ let us denote by $k^* = \mu_Y \comp Tk : TX \to TY$ the corresponding homomorphism in $\Met^\Tmon$.
	Recall our fixed set $V = \{  x_i \mid i \in \Nat \}$ of variables, and form, for each $n \in \Nat$, the finite discrete space $V_n = \{ x_i \mid i < n \}$.
	The signature we use has as $n$-ary symbols the elements of the space $TV_n$: $\Sigma_n = |TV_n|$ for $n \in \Nat$.
	The variety $\Vvar_\Tmon$ is given by the following quantitative equations, where each symbol $\sigma \in \Sigma_n$ is considered as the term $\sigma(x_1,\dots,x_{n-1})$ and $n,m$ below range over $\Nat$:
	\begin{enumerate}[left= 5pt]
		
		\item $\sigma =_\eps \sigma'$ for all $\sigma,\sigma' \in \Sigma_n$ with $d(\sigma,\sigma') \leq \eps$ in $TV_n$;
		
		\item $k^*(\sigma) = \sigma(k(x_i))_{i<n}$ for all $\sigma \in \Sigma_n$ and all maps $k : V_n \to \Sigma_m$;
		
		\item $\eta_{V_n}(x_i) = x_i$ for all $i = 0,\dots,n-1$.
		
	\end{enumerate}
\end{construction}

\begin{lemma}
	\label{L:homo}	
	Every algebra $\alpha : TA \to A$ in $\Met^\Tmon$ yields an algebra $A$ in $\Vvar_\Tmon$ with operations $\sigma_A : A^n \to A$ defined by $\sigma_A(a(x_i)) = a^*(\sigma)$ for all $\sigma \in \Sigma_n$ and $a : V_n \to A$.
	Moreover, every homomorphism in $\Met^\Tmon$ is also a $\Sigma$-homomorphism between the corresponding algebras in $\Vvar_\Tmon$.
\end{lemma}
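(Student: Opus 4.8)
The statement of Lemma~\ref{L:homo} has two parts. First, given an Eilenberg--Moore algebra $\alpha : TA \to A$, the prescribed operations $\sigma_A(a(x_i)) = a^*(\sigma)$ turn $A$ into an algebra of $\Vvar_\Tmon$; second, every $\Tmon$-homomorphism is automatically a $\Sigma$-homomorphism for these operations. The plan is to treat the two halves separately, but both hinge on unwinding the definition $a^* = \mu_A \comp Ta$ and the monad/algebra laws. I would begin by checking that each $\sigma_A : A^n \to A$ is genuinely a morphism of $\Met$, i.e.\ nonexpanding. Writing $\sigma_A = \alpha \comp (\blank)^*(\sigma)$ where $(\blank)^*(\sigma) : A^n \to TA$ sends a tuple $a$ (viewed as a map $V_n \to A$) to $a^*(\sigma)$, this is the composite of $\alpha$ (nonexpanding, being a morphism of $\V$) with the evaluation-at-$\sigma$ of the map $a \mapsto a^*$; nonexpansiveness of the latter follows from functoriality of $T$ together with the fact that $A^n \cong \Met(V_n, A)$ carries the supremum metric and $V_n$ is discrete.

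**Verifying the three equation schemes.**
Next I would check that $A$, with these operations, satisfies the three families of quantitative equations defining $\Vvar_\Tmon$ (Construction~\ref{C:eq-q}). For scheme~(1), $\sigma =_\eps \sigma'$ whenever $d(\sigma,\sigma') \leq \eps$ in $TV_n$: here I must show $d(\sigma_A(a(x_i)), \sigma'_A(a(x_i))) \leq \eps$ for every interpretation $a$, which reduces to $d(a^*(\sigma), a^*(\sigma')) \leq \eps$, and this is exactly nonexpansiveness of $a^* = \mu_A \comp Ta$ applied to the pair $\sigma, \sigma'$ at distance $\leq \eps$. For scheme~(3), $\eta_{V_n}(x_i) = x_i$: unwinding, $\eta_{V_n}(x_i)$ is an $n$-ary symbol and the equation says $(\eta_{V_n}(x_i))_A(a) = a(x_i)$, i.e.\ $a^*(\eta_{V_n}(x_i)) = a(x_i)$, which is the unit law $a^* \comp \eta_{V_n} = a$ (one of the triangle identities, since $\alpha \comp \eta_A = \id$). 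Scheme~(2), the substitution law $k^*(\sigma) = \sigma(k(x_i))_{i<n}$ for $k : V_n \to \Sigma_m = |TV_m|$, is the technical heart: it is a naturality/associativity identity relating substitution of symbols to composition of Kleisli maps, and I expect it to follow from the associativity law $\mu \comp T\mu = \mu \comp \mu T$ together with the identity $a^* \comp k^* = (a^* \comp k)^*$ for the Kleisli-star operation.

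**Homomorphism property and main obstacle.**
For the second assertion, let $h : A \to B$ underlie a $\Tmon$-homomorphism, so $h \comp \alpha = \beta \comp Th$. I would show $h(\sigma_A(a(x_i))) = \sigma_B(h \comp a(x_i))$ for each $\sigma \in \Sigma_n$ and $a : V_n \to A$, i.e.\ $h(a^*(\sigma)) = (h \comp a)^*(\sigma)$. Expanding both sides via $a^* = \mu_A \comp Ta$ and using $h \comp \mu_A = \mu_B \comp TTh$ (a consequence of $h$ respecting the algebra structure together with naturality of $\mu$), this is again a routine diagram chase in the monad axioms. The main obstacle I anticipate is scheme~(2): it requires carefully identifying, for a map $k : V_n \to TV_m$, the Kleisli composite $k^* \comp (\blank)$ with term substitution, and then reconciling the resulting $\mu$-laws with the definition of $\sigma_A$; all the remaining verifications are direct applications of the unit, associativity, and naturality identities, so once scheme~(2) is settled the rest is bookkeeping.
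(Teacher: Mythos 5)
Your proposal takes essentially the same route as the paper's proof: nonexpansiveness of $\sigma_A$ from the enriched (locally nonexpanding) structure of $T$, scheme (1) from nonexpansiveness of $a^*$, scheme (2) from the identity $a^* \comp k^* = (a^* \comp k)^*$, scheme (3) from the unit law, and the homomorphism claim from $h \comp a^* = (h \comp a)^*$. The only slip is notational: for a general algebra $(A,\alpha)$ the extension of $a : V_n \to A$ is $a^* = \alpha \comp Ta$ (not $\mu_A \comp Ta$, which does not typecheck), and accordingly the justification of the last step should be the one-line chase $h \comp a^* = h \comp \alpha \comp Ta = \beta \comp Th \comp Ta = (h \comp a)^*$ rather than the ill-typed $h \comp \mu_A = \mu_B \comp TTh$.
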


\begin{proof}[Proof sketch]
	\begin{enumerate}
		
		\item The operation $\sigma_A$ is nonexpanding because $T$ is locally nonexpanding.
		It satisfies 1) in Construction~\ref{C:eq-q} because for every interpretation $a : V_n \to A$ we have $d(a^*(l),a^*(r)) \leq \eps$.
		Satisfaction of 2) follows from $a^* \comp k^* = (a^* \comp k)^*$, and 3) is clearly satisfied.
		Thus the $\Sigma$-algebra $A$ lies in $\Vvar_\Tmon$.
		
		\item Given a morphism $h : (A,\alpha) \to (B,\beta)$ in $\Met^\Tmon$ (i.e., $h \comp \alpha = \beta \comp Th$) we are to prove that $h \comp \sigma_A = \sigma_B \comp h^n$ for all $\sigma \in TV_n$.
		This follows easily from $h \comp a^* = (h \comp a)^*$ for each $a : V_n \to A$.
		
	\end{enumerate}
\end{proof}

\begin{theorem}
	\label{T:mon-var}
	Every strongly finitary monad $\Tmon$ on $\Met$ is the free-algebra monad of the variety $\Vvar_\Tmon$.
\end{theorem}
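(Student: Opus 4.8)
The plan is to promote Lemma~\ref{L:homo} to a concrete isomorphism and then transport it across Proposition~\ref{P:beck}. Lemma~\ref{L:homo} provides a concrete functor $\Phi : \Met^{\Tmon} \to \Vvar_\Tmon$, sending an algebra $(A,\alpha)$ to the $\Sigma$-algebra on $A$ with operations $\sigma_A(a(x_i)) = a^*(\sigma)$. I will prove that $\Phi$ is an isomorphism of categories; being concrete, it is automatically faithful, so the substance is to construct an inverse on objects and to verify fullness. Granting this, composition with the comparison isomorphism $K_{\Vvar_\Tmon} : \Vvar_\Tmon \to \Met^{\Tmon_{\Vvar_\Tmon}}$ of Proposition~\ref{P:beck} produces a concrete isomorphism $\Met^{\Tmon} \cong \Met^{\Tmon_{\Vvar_\Tmon}}$ over $\Met$. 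Since a concrete isomorphism between Eilenberg--Moore categories over $\Met$ carries free algebras to free algebras and hence identifies the induced monads, we obtain $\Tmon \cong \Tmon_{\Vvar_\Tmon}$, which is the assertion of the theorem.

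To invert $\Phi$ I equip every algebra $A$ of $\Vvar_\Tmon$ with a structure map $\alpha : TA \to A$. The formula is forced: were $(A,\alpha)$ a $\Tmon$-algebra inducing the given operations, then $\alpha(Ta(\sigma)) = a^*(\sigma) = \sigma_A(a(x_i))$ for every $a : V_n \to A$ and $\sigma \in TV_n$. Strong finitarity guarantees that every $t \in TA$ is of this form: by finitarity $t$ comes from $TA_0$ for a finite subspace $A_0 \subseteq A$, and since $T$ preserves the colimit of the precongruence of $A_0$ (Corollary~\ref{C:coco}, Proposition~\ref{P:prec}), it carries the covering leg $|A_0| \to A_0$ to a surjection $T|A_0| \to TA_0$; an enumeration $V_n \cong |A_0|$ then exhibits $t = Ta(\sigma)$. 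This already shows $\Phi$ is injective on objects. To build $\alpha$ I use the universal property of $TA$ as the value $\Lan{K}{(T \comp K)}(A)$, i.e.\ as the colimit assembled from the spaces $TV_n$ through the density presentation: the maps $g_a : TV_n \to A$, $\sigma \mapsto \sigma_A(a(x_i))$, constitute a cocone over this diagram, and the induced morphism is the desired $\alpha$.

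The three families of equations of Construction~\ref{C:eq-q} are precisely what makes this cocone legitimate and $\alpha$ an Eilenberg--Moore structure. Equation~(1) asserts that $d(\sigma,\sigma') \le \eps$ in $TV_n$ forces distance at most $\eps$ between $\sigma_A(a(x_i))$ and $\sigma'_A(a(x_i))$; together with nonexpansiveness of each $\sigma_A$ this makes every $g_a$ nonexpanding, and it is here that the metric (precongruence) part of the presentation is accommodated. Equation~(2), the substitution law $k^*(\sigma) = \sigma(k(x_i))$, expresses exactly the compatibility of the family $(g_a)$ with the connecting maps of the diagram, so that it descends to a well-defined $\alpha$ on $TA$; applied with the multiplication it also yields associativity $\alpha \comp \mu_A = \alpha \comp T\alpha$. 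Equation~(3), $\eta_{V_n}(x_i) = x_i$, gives the unit law $\alpha \comp \eta_A = \id_A$. Thus $\Psi : A \mapsto (A,\alpha)$ is a concrete functor, inverse to $\Phi$ on objects. Fullness of $\Phi$ follows from the same element formula and the covering of $TA$: for a $\Sigma$-homomorphism $h : A \to B$ one computes $h(\alpha_A(Ta(\sigma))) = \sigma_B(h(a(x_i))) = \alpha_B(Th(Ta(\sigma)))$, so $h$ is a $\Tmon$-homomorphism.

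The main obstacle is the construction of $\alpha$ in the second paragraph: one must check that the operation-cocone $(g_a)$ respects the entire gluing by which $TA$ is built from the finite discrete data $TV_n$. Because strong finitarity presents $TA$ through both directed colimits of finite subspaces and colimits of precongruences, this means verifying compatibility of $(g_a)$ with the connecting maps of the directed system and with the left and right projections of each precongruence simultaneously; equations~(1) and~(2) are used in tandem here, the former controlling distances across a precongruence, the latter substitution across the directed system. Establishing that the resulting $\alpha$ is at once well defined, nonexpanding, and a monad-algebra structure is the technical core; the final passage from the concrete isomorphism to the equality of monads is then formal.
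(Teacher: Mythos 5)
Your proposal is correct, but it takes a genuinely different route from the paper. The paper does \emph{not} construct an Eilenberg--Moore structure map $\alpha : TA \to A$ on an arbitrary algebra of $\Vvar_\Tmon$; instead it proves that the $\Sigma$-algebra carried by $(TV_n,\mu_{V_n})$ is \emph{free} in $\Vvar_\Tmon$ on the finite discrete space $V_n$ (the map $\ol{f}(\sigma)=\sigma_A(f(x_i))$, nonexpanding by equation~(1), extending $f$ by equation~(3), a $\Sigma$-homomorphism by equation~(2), unique because $\sigma_{TV_n}(\eta_{V_n}(x_i))=\sigma$), and then concludes formally: both $\Tmon$ and $\Tmon_{\Vvar_\Tmon}$ are strongly finitary (the latter by Theorem~\ref{T:var-mon}), hence both preserve colimits of precongruences and directed colimits, so agreement of free algebras on $\Set_\fin$ propagates to all of $\Met$. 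Your argument instead globalizes the same local computation: you assemble the maps $g_a:TV_n\to A$ into a structure map $\alpha:TA\to A$ via the coend/density presentation of $TA=\Lan{K}{(T\comp K)}(A)$, and verify the unit and associativity laws directly (associativity is indeed exactly equation~(2) applied to substitutions $k:V_m\to TV_n$, after writing an element of $TTA$ as $T(Ta)(Tk(\tau))$ and using naturality of $\mu$). What each approach buys: the paper's reduction to $\Set_\fin$ avoids ever touching the monad laws for $\alpha$ and makes the extension step purely formal, but it needs Theorem~\ref{T:var-mon} as input; your construction is self-contained on that point, does not need to know in advance that $\Tmon_{\Vvar_\Tmon}$ is strongly finitary, and yields the concrete isomorphism $\Met^\Tmon\cong\Vvar_\Tmon$ explicitly, at the price of the extranaturality and associativity verifications you rightly identify as the technical core. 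One small point to make explicit in your write-up: the surjectivity of $T|A_0|\to TA_0$ follows because the $a$-leg of \emph{any} $W_0$-weighted colimit of a $\D_0$-shaped diagram in $\Met$ is surjective (the colimit is a re-metrized quotient of the $a$-vertex), so preservation of the precongruence colimit transfers this to $T$; this is true but deserves a sentence, as it is the analogue of the surjectivity the paper extracts from Lemma~\ref{L:help2}.
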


\begin{proof}
	For every metric space $M$ we want to prove that the $\Sigma$-algebra associated with $(TM,\mu_M)$ in Lemma~\ref{L:homo} is free in $\Vvar_\Tmon$ w.r.t.\ the universal map $\eta_M$.
	Then the theorem follows from Proposition~\ref{P:beck}.
	
	We have two strongly finitary monads, $\Tmon$ and the free algebra monad of $\Vvar_\Tmon$ (Theorem~\ref{T:var-mon}).
	Thus, it is sufficient to prove the above for finite discrete spaces $M$.
	Then this extends to all finite spaces because we have $M = \Colim{W_0}{D_M}$ (Lemma~\ref{P:prec}) and both monads preserve this colimit.
	Since they coincide on all finite discrete spaces, they coincide on $M$.
	Finally, the above extends to all spaces $M$: we have a directed colimit $M = \colim_{i \in I} M_i$ of the diagram of all finite subspaces $M_i$ ($i \in I$) which both monads preserve.
	
	Given a finite discrete space $M$, we can assume without loss of generality $M = V_n$ for some $n \in \Nat$.
	For every algebra $A$ in $\Vvar_\Tmon$ and an interpretation $f : V_n \to A$, we prove that there exists a unique $\Sigma$-homomorphism $\ol{f} : TV_n \to A$ with $f = \ol{f} \comp \eta_{V_n}$.
	\begin{description}
		\item[Existence] Define $\ol{f}(\sigma) = \sigma_A(f(x_i))_{i<n}$ for every $\sigma \in TV_n$.
		The equality $f = \ol{f} \comp \eta_{V_n}$ follows since $A$ satisfies the equations $\eta_{V_n}(x_i) = x_i$, thus the operation of $A$ corresponding to $\eta_{V_n}(x_i)$ is the $i$-th projection.
		The map $\ol{f}$ is nonexpanding: given $d(l,r) \leq \eps$, the algebra $A$ satisfies $l =_\eps r$.
		Therefore given an $n$-tuple $f : V_n \to A$ we have
		\[
		d(l_A(f(x_i)),r_A(f(x_i))) \leq \eps.
		\]
		To prove that $\ol{f}$ is a $\Sigma$-homomorphism, take an $m$-ary operation symbol $\tau \in TV_m$.
		We prove $\ol{f} \comp \tau_{V_m} = \tau_A \comp \ol{f}^m$.
		This means that every $k : V_m \to TV_n$ fulfils
		\[
		\ol{f} \comp \tau_{V_m} (k(x_j))_{j < m} = \tau_A \comp \ol{f}^m (k(x_j))_{j < m}.
		\]
		The definition of $\ol{f}$ yields that the right-hand side is $\tau_A(k(x_j)_A(f(x_i)))$.
		Due to equation (2) in Construction~\ref{C:eq-q} with $\tau$ in place of $\sigma$ this is $k^*(\tau)_A(f(x_i))$.
		The left-hand side yields the same result since
		\[
		\ol{f}^m(k(x_j)) = (k(x_j))_A (f(x_i)).
		\]
		\item[Uniqueness] Let $\ol{f}$ be a nonexpanding $\Sigma$-homomorphism with $f = \ol{f} \comp \eta_{V_n}$.
		In $TV_n$ the operation $\sigma$ asigns to $\eta_{V_n}(x_i)$ the value $\sigma$.
		(Indeed, for every $a : n \to |TV_n|$ we have $\sigma_{TV_n}(a_i) = a^*(\sigma) = \mu_{V_n} \comp Ta(\sigma)$.
		Thus $\sigma_{TV_n}(\eta_{V_n}(x_i)) = \mu_{V_n} \comp T \eta_{V_n} (\sigma) = \sigma^n$.)
		Since $\ol{f}$ is a homomorphism, we conclude
		\[
		f(\sigma) = \sigma_A(\ol{f} \comp \eta_{V_n} (x_i)) = \sigma_A(f(x_i))
		\]
		which is the above formula.
	\end{description}
\end{proof}

\begin{corollary}
	Varieties of quantitative algebras correspond bijectively, up to isomorphism, to strongly finitary monads on $\Met$.
\end{corollary}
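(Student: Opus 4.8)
The plan is to read the bijection off directly from Theorems~\ref{T:var-mon} and~\ref{T:mon-var} together with Proposition~\ref{P:beck}; the corollary is essentially the formal assertion that the two assignments already constructed in this section are mutually inverse up to isomorphism, so the work is to verify that the composites return the original object.

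First I would fix the two directions of the correspondence. To a variety $\Vvar$ I assign its free-algebra monad $\Tmon_\Vvar$; by Theorem~\ref{T:var-mon} this monad is strongly finitary, so the assignment $\Vvar \mapsto \Tmon_\Vvar$ genuinely lands among strongly finitary monads on $\Met$. Conversely, to a strongly finitary monad $\Tmon$ I assign the variety $\Vvar_\Tmon$ of Construction~\ref{C:eq-q}. It then remains to check that these two assignments compose to the identity, in each direction, up to the appropriate notion of sameness: concrete isomorphism for varieties and monad isomorphism for monads.

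For the composite $\Tmon \mapsto \Vvar_\Tmon \mapsto \Tmon_{\Vvar_\Tmon}$, Theorem~\ref{T:mon-var} does exactly the required work, since it states that an arbitrary strongly finitary monad $\Tmon$ is the free-algebra monad of $\Vvar_\Tmon$, i.e.\ $\Tmon_{\Vvar_\Tmon} \cong \Tmon$. For the composite $\Vvar \mapsto \Tmon_\Vvar \mapsto \Vvar_{\Tmon_\Vvar}$, I would argue as follows. Given a variety $\Vvar$, Proposition~\ref{P:beck} supplies a concrete isomorphism $\Vvar \cong \Met^{\Tmon_\Vvar}$. Since $\Tmon_\Vvar$ is strongly finitary, Theorem~\ref{T:mon-var} applies to it and shows that the variety $\Vvar_{\Tmon_\Vvar}$ has free-algebra monad $\Tmon_\Vvar$; applying Proposition~\ref{P:beck} once more, now to $\Vvar_{\Tmon_\Vvar}$, yields $\Vvar_{\Tmon_\Vvar} \cong \Met^{\Tmon_\Vvar}$. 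Chaining these concrete isomorphisms gives $\Vvar_{\Tmon_\Vvar} \cong \Met^{\Tmon_\Vvar} \cong \Vvar$, as desired.

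Since all the real content is carried by the two theorems and by the monadicity statement for $\Met$ (Proposition~\ref{P:beck}), I do not expect a genuine obstacle at this stage: the only remaining task is bookkeeping, namely to confirm that the isomorphisms exhibited above are compatible with the chosen equivalence relations on the two sides, so that the assignments descend to honestly mutually inverse maps on isomorphism classes. This is routine because every isomorphism appearing in the argument has already been produced either as a concrete isomorphism of categories over $\Met$ or as an isomorphism of monads, and both properties are manifestly transitive.
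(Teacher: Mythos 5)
Your proposal is correct and rests on exactly the same ingredients as the paper: Theorem~\ref{T:var-mon} (well-definedness of $\Vvar \mapsto \Tmon_\Vvar$), Theorem~\ref{T:mon-var} (the round trip on monads), and Proposition~\ref{P:beck} (to transport the round trip on varieties through the Eilenberg--Moore categories). The only difference is packaging: the paper derives the corollary from the stronger functorial statement (the equivalence $\Var(\Met) \simeq \Mon_\sf(\Met)^\op$ of Theorem~\ref{T:main-c}), whereas you verify the two composites directly on isomorphism classes, which suffices for the corollary as stated.
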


Indeed, a stronger result can be deduced from Theorems~\ref{T:var-mon} and~\ref{T:mon-var}: let $\Var(\Met)$ denote the category of varieties of quantitative algebras and concrete functors (Remark~\ref{R:beck}).
Recall that $\Mon_\sf(\Met)$ denotes the category of strongly finitary monads.

\begin{theorem}
	\label{T:main-c}
	The category $\Var(\Met)$ of varieties of quantitative algebras is equivalent to the dual of the category $\Mon_\sf(\Met)$ of strongly finitary monads on $\Met$.
\end{theorem}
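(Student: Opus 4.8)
The plan is to promote the object-level bijection (the preceding Corollary) to an equivalence by exhibiting a single functor
\[
\Phi : \Var(\Met) \to \Mon_\sf(\Met)^\op
\]
and showing it is essentially surjective and fully faithful. On objects I set $\Phi\Vvar = \Tmon_\Vvar$, the free-algebra monad, which is strongly finitary by Theorem~\ref{T:var-mon}. On morphisms I use the comparison isomorphisms $K_\Vvar : \Vvar \to \Met^{\Tmon_\Vvar}$ of Proposition~\ref{P:beck}: a concrete functor $F : \Vvar \to \Wvar$ transports along $K_\Vvar, K_\Wvar$ to a concrete functor $\widetilde{F} = K_\Wvar \comp F \comp K_\Vvar^{-1} : \Met^{\Tmon_\Vvar} \to \Met^{\Tmon_\Wvar}$ over $\Met$. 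The source of the duality is the (enriched) semantics--structure correspondence: a monad morphism $\beta : \Smon \to \Tmon$ induces the concrete functor $\beta^\ast : \Met^\Tmon \to \Met^\Smon$, $(A,a) \mapsto (A, a \comp \beta_A)$, and conversely every concrete functor $\Met^\Tmon \to \Met^\Smon$ over $\Met$ arises from a unique such $\beta$. Reading this off, $\widetilde{F}$ corresponds to a unique monad morphism $\Phi F : \Tmon_\Wvar \to \Tmon_\Vvar$, i.e.\ a morphism $\Tmon_\Vvar \to \Tmon_\Wvar$ in $\Mon_\sf(\Met)^\op$.

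Functoriality of $\Phi$ is immediate once the correspondence above is in place: it is a contravariant bijection compatible with identities and composition, so identities go to identities and $\Phi(G \comp F) = \Phi F \comp \Phi G$ in $\Mon_\sf(\Met)$, which is exactly composition in the dual. Essential surjectivity is Theorem~\ref{T:mon-var}: every strongly finitary $\Tmon$ equals $\Tmon_{\Vvar_\Tmon}$, hence lies in the image of $\Phi$ (up to isomorphism, indeed up to equality). For full faithfulness, fix varieties $\Vvar, \Wvar$ and chase the hom-sets: by Proposition~\ref{P:beck} the concrete functors $\Vvar \to \Wvar$ biject, via $K_\Vvar$ and $K_\Wvar$, with the concrete functors $\Met^{\Tmon_\Vvar} \to \Met^{\Tmon_\Wvar}$, and the latter biject, by the structure--semantics correspondence, with the monad morphisms $\Tmon_\Wvar \to \Tmon_\Vvar$; this composite bijection is precisely the action of $\Phi$ on $\Var(\Met)(\Vvar,\Wvar)$.

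The main obstacle is the enriched semantics--structure correspondence itself. In the non-enriched setting it is the classical statement that the Eilenberg--Moore construction is fully faithful on monads with respect to concrete functors between algebra categories; here one must check that it persists in the $\Met$-enriched world. Concretely, one verifies that each concrete functor $\Met^\Tmon \to \Met^\Smon$ is automatically locally nonexpanding (which follows since it commutes with the faithful, enriched forgetful functors), that the induced $\beta$ is an enriched natural transformation, and that $\beta \mapsto \beta^\ast$ and its inverse are mutually inverse bijections respecting composition. This uses only that the forgetful functor $\Met^\Tmon \to \Met$ is enriched monadic with the expected universal property; no further appeal to strong finitarity is needed, since that property enters solely through Theorems~\ref{T:var-mon} and~\ref{T:mon-var} to pin down the essential image. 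With the correspondence established, $\Phi$ is an equivalence, proving the theorem.
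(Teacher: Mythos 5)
Your proposal is correct and follows essentially the same route as the paper: the paper also defines $\Phi$ on objects via the free-algebra monad (Theorem~\ref{T:var-mon}), on morphisms via the structure--semantics bijection between monad morphisms $\Smon \to \Tmon$ and concrete functors $\Met^\Tmon \to \Met^\Smon$ (citing Barr--Wells, Theorem~3.3) transported along the comparison isomorphisms of Proposition~\ref{P:beck}, and concludes full faithfulness from that bijection and essential surjectivity from Theorem~\ref{T:mon-var}. Your extra care about verifying the enriched form of the structure--semantics correspondence is a sound point the paper glosses over by citation, but it does not change the argument.
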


\begin{proof}
	Morphisms $\phi: \Smon \to \Tmon$ between monads in $\Mon_\sf(\Met)$ bijectively correspond to concrete functors $\ol{\phi} : \Met^\Tmon \to \Met^\Smon$ (\cite{barr+wells:toposes}, Theorem~3.3): $\ol{\phi}$ assigns to an algebra $\alpha: TA \to A$ of $\Met^\Tmon$ the algebra $\alpha \comp \phi_A : SA \to A$ in $\Met^\Smon$.
	We know that for every variety $\Vvar$ the comparison functor is invertible (Proposition~\ref{P:beck}).
	This yields a functor $\Phi: \Var(\Met)^\op \to \Mon_\sf(\Met)$ assigning to a variety $\Vvar$ the monad $\Tmon_\Vvar$ (Theorem~\ref{T:var-mon}).
	Given a concrete functor $F : \Vvar \to \Wvar$ between varieties, there is a unique monad morphism $\phi: \Tmon_\Wvar \to \Tmon_\Vvar$ such that $\ol{\phi} = K_\Wvar \comp F \comp K_\Vvar^{-1} : \Met^{\Tmon_\Vvar} \to \Met^{\Tmon_\Wvar}$.
	We define $\Phi F = \phi$ and get a functor which is clearly full and faithful.
	Thus Theorem~\ref{T:mon-var} implies that $\Phi$ is an equivalence of categories.
\end{proof}

The whole development of the present section works in $\Sigma\text{-}\CMet$ as well as in $\CMet$.
First observe that for every complete space $M$ the space $T_\Sigma M$ of Example~\ref{E:free-q} is also complete (being a coproduct of finite powers of $M$).
The resulting monad $\Tmon_\Sigma$ on the category $\CMet$ is strongly finitary (as in Example~\ref{E:ex}).
More generally, every variety $\Vvar$ of complete quantitative algebras yields a monad $\Tmon_\Vvar$ on $\CMet$ which is strongly finitary, and $\Vvar$ is isomorphic to $\Met^{\Tmon_\Vvar}$.
The proof is analogous to that of Theorem~\ref{T:var-mon}, just at the end we use, instead of the factorization system (surjective, isometric embedding) of $\Met$ the factorization system of $\CMet$ consisting of dense morphisms $f : A \to B$ (every element of $B$ is a limit of a sequence in $f[A]$) followed by closed isometric embeddings.
The proof that every strongly finitary monad on $\CMet$ is the free-algebra monad of a variety is completely analogous to that of Theorem~\ref{T:mon-var}.
We thus obtain

\begin{theorem}
	\label{T:main-cc}
	The category $\Var(\CMet)$ of varieties of complete quantitative algebras is equivalent to the dual of the category $\Mon_\sf(\CMet)$ of strongly finitary monads on $\CMet$.
\end{theorem}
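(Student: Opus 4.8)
The plan is to transport the entire argument for Theorem~\ref{T:main-c} from $\Met$ to $\CMet$, so that the proof reduces to the $\CMet$-analogues of Theorems~\ref{T:var-mon} and~\ref{T:mon-var} together with the Barr--Wells correspondence between monad morphisms and concrete functors. First I would establish the $\CMet$-version of Proposition~\ref{P:beck}: every variety $\Vvar$ of complete quantitative algebras is concretely isomorphic to $\CMet^{\Tmon_\Vvar}$. Since $T_\Sigma M$ is complete whenever $M$ is (being a coproduct of finite powers of $M$), the monad $\Tmon_\Sigma$ lives on $\CMet$ and is strongly finitary exactly as in Example~\ref{E:ex}, and the free-algebra adjunction of $\Vvar$ restricts to $\CMet$. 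The satisfaction criterion of Lemma~\ref{L:equa}, phrased through the monad $\spitze{A,A}$, also carries over, as $[[X,A],A]$ is complete when $A$ is.

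Next I would prove the object-level correspondence in both directions. For the forward direction ($\CMet$-analogue of Theorem~\ref{T:var-mon}), I would present $\Tmon_\Vvar$ as the weighted colimit $\Colim{W}{D}$ in $\Mon_\fin(\CMet)$ of the strongly finitary monads $\Tmon_\Sigma$ and $\Tmon_{[n(i)]}$, using exactly the diagram $D$ and weight $W$ built from the defining equations $l_i =_{\eps_i} r_i$. Closure of $\Mon_\sf(\CMet)$ under weighted colimits inside $\Mon_\fin(\CMet)$ is the $\CMet$-form of Lemma~\ref{L:cl}, and it yields that $\Tmon_\Vvar$ is strongly finitary with $\Vvar \cong \CMet^{\Tmon_\Vvar}$. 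For the reverse direction ($\CMet$-analogue of Theorem~\ref{T:mon-var}), the construction of $\Vvar_\Tmon$ (Construction~\ref{C:eq-q}) and the freeness verification carry over verbatim: strong finitarity reduces freeness to finite discrete spaces, extends to finite spaces through the precongruence colimit $M = \Colim{W_0}{D_M}$ (Proposition~\ref{P:prec}, restricted to $\CMet$), and then extends to all complete spaces through the directed colimit of finite subspaces --- all colimits that both monads in play preserve.

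Finally I would assemble the equivalence as in Theorem~\ref{T:main-c}: monad morphisms $\phi : \Smon \to \Tmon$ in $\Mon_\sf(\CMet)$ correspond bijectively to concrete functors $\ol{\phi} : \CMet^\Tmon \to \CMet^\Smon$, and invertibility of the comparison functors yields a full and faithful functor $\Phi : \Var(\CMet)^\op \to \Mon_\sf(\CMet)$ with $\Phi\Vvar = \Tmon_\Vvar$, which the object-level correspondence makes essentially surjective, hence an equivalence.

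The main obstacle is the factorization step inside the forward direction. In $\Met$ one invokes Lemma~\ref{L:help2}, factoring a monad morphism $\Tmon_\Sigma \to \Smon$ in $\Mon_\fin(\Met)$ as surjective components followed by isometric embeddings, and derives that the colimit cocone $\gamma$ has surjective components, which makes the comparison $E : \Met^\Tmon \to \Vvar$ a concrete isomorphism. In $\CMet$ surjectivity is too strong, since weighted and directed colimits are formed only up to density --- the union of images of a colimit cocone need merely be dense, as in the colimit characterization used for directed colimits in $\CMet$. The hard part will therefore be the $\CMet$-analogue of Lemma~\ref{L:help2}, replacing the $(\text{surjective}, \text{isometric embedding})$ factorization by the $(\text{dense}, \text{closed isometric embedding})$ factorization of $\CMet$, and checking that cocone components $\gamma$ that are only dense still force $E$ to be a concrete isomorphism, together with verifying that completeness is preserved at every stage of the colimit computation so that $\Tmon_\Vvar$ genuinely lives in $\Mon_\fin(\CMet)$.
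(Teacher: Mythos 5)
Your proposal follows essentially the same route as the paper: transport the whole development of the $\Met$ section to $\CMet$, with the single substantive change being the replacement of the (surjective, isometric embedding) factorization system by the (dense, closed isometric embedding) factorization system of $\CMet$ in the analogue of Theorem~\ref{T:var-mon}, and then assemble the equivalence exactly as in Theorem~\ref{T:main-c}. You correctly identify the factorization step as the one point requiring genuine modification, which is precisely the point the paper singles out.
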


\section{Varieties of Continuous Algebras}

For the categories $\Pos$, $\CPO$ and $\DCPO$ we obtain here and in Section~5 the same result: varieties of algebras bijectively correspond to strongly finitary monads.
For $\Pos$ we have proved this in~\cite{adv:ordered-algebras}.
The proof for $\CPO$ presented below is very different from the proofs in~\cite{adv:ordered-algebras} and in the previous section.
In fact, already the concept of equation is entirely different since it uses formal joins $\bigvee_{k \in \Nat} t_k$ of collections $t_0,t_1,t_2,\dots$ of terms.
The idea of such formal joins stems from~\cite{anr}, but our concept is slightly more restrictive: we request that all the terms $t_i$ contain only a finite set of variables.
We assume again that $\Sigma$ is a finitary signature, and that a countable set $V$ of variables has been chosen.
The underlying set of a cpo $M$ is denoted by $|M|$.

\begin{definition}
	A \emph{continuous algebra} is a cpo $A$ endowed with continuous operations $\sigma_A : A^n \to A$ for every $n$-ary symbol $\sigma \in \Sigma$ (w.r.t.\ the coordinate-wise order on $A^n$).
	We denote by $\Sigma\text{-}\CPO$ the category of continuous algebras and continuous homomorphisms.
\end{definition}

\begin{example}
	\label{E:free-c}
	A free continuous algebra on a cpo $M$ is the usual algebra $T_\Sigma M$ of terms on variables from $|M|$ (compare Example~\ref{E:free-q}) with the following order $\sqleq$ extending that of $M$: $t \sqleq t'$ iff $t$ and $t'$ are similar, $t = \sigma(t_i)_{i < n}$, $t' = \sigma(t_i')_{i < n}$ and such that $t_i \sqleq t_i'$ for every $i < n$.
\end{example}

In particular, considering $V$ as a discrete cpo (no distinct elements are comparable), then $T_\Sigma V$ is the discrete cpo of the usual terms.
For every continuous algebra $A$ and every interpretation $f : V \to A$ of variables we again denote by $f^\sharp : T_\Sigma V \to A$ the corresponding homomorphism.
As already mentioned, usual terms are not sufficient for equational presentations: we need formal joins of terms.
We use the symbol $\bigvee_{k \in \Nat}$ for them, while $\bigsqcup_{k \in \Nat}$ denotes $\omega$-joins in a given poset.

\begin{definition}
	\phantom{phantom}
	\begin{enumerate}
		
		\item The set $\T_\Sigma V$ of \emph{extended terms} is the smallest set containing $T_\Sigma V$, and such that for every countable collection $t_k$ ($k \in \Nat$) of extended terms containing only finitely many variables we get an extended term $\bigvee_{k \in \Nat} t_k$.
		
		\item For every continuous algebra $A$ and every interpretation $f : V \to A$ of variables we define the \emph{interpretations of extended terms} as the following partial function $f^@ : \T_\Sigma V \rightharpoonup A$:
		\begin{enumerate}[label=(\alph*)]
			\item $f^@$ extends $f^\sharp$ (thus $f^@(t)$ is defined for all terms $t \in T_\Sigma V$), and
			\item $f^@$ is defined in $t = \bigvee_{k \in \Nat} t_k$ iff each $f^@(t_k)$ is defined and fulfils $f^@(t_k) \sqleq f^@(t_{k+1})$ in $A$; then $f^@(t) = \bigsqcup_{k \in \Nat} f^@(t_k)$.
		\end{enumerate}
		
	\end{enumerate}
\end{definition}

\begin{example}
	Given a unary operation $\sigma$, the extended term $\bigvee_{k \in \Nat} \sigma^k(x)$ is well formed, but $\bigvee_{k \in \Nat} \sigma(x_k)$ is not: it contains infinitely many variables.
\end{example}

\begin{definition}
	\label{D:sat}
	By an \emph{equation} we understand a formal expression $t = t'$, where $t,t'$ are extended terms in $\T_\Sigma V$.
	
	A continuous algebra \emph{satisfies} $t = t'$  if for every interpretation $f : V \to A$ of the variables both $f^@(t)$ and $f^@(t')$ are defined and are equal.
\end{definition}

A \emph{variety} of continuous algebras is a full subcategory of $\Sigma\text{-}\CPO$ presented by a set of equations.

\begin{remark}
	\label{R:defi}
	\phantom{phantom}
	\begin{enumerate}
		
		\item We do not need presentation by inequations $t \leq t'$.
		Indeed, to satisfy such an inequation means precisely to satisfy $t = \bigvee_{k \in \Nat} t_k$ where $t_0 = t$ and $t_k = t'$ for all $k > 0$.
		
		\item A term $t$ is \emph{definable} in an algebra $A$ iff for every interpretation $f : V \to A$ of variables $f^@(t)$ is defined.
		Instead of equations, we can use definability to introduce varieties.
		Indeed, an algebra satisfies $t = t'$ iff the term $\bigvee_{k \in \Nat} s_k$ where $s_0 = t$, $s_1 = t'$ and $s_k = t$ for $k \geq 2$ is definable.
		Conversely, $t$ is definable in $A$ iff $A$ satisfies $t = t$.
		
	\end{enumerate}
\end{remark}

\begin{example}
	\phantom{phantom}
	\begin{enumerate}
		
		\item \emph{Continuous monoids} are monoids acting on cpos with continuous multiplication: for all $\omega$-chains $(a_k)$, $(b_k)$ we have $(\bigsqcup a_k)(\bigsqcup b_k) = \bigsqcup(a_k b_k)$.
		This is a variety presented by the usual monoid equations (see Example~\ref{E:quant}).
		
		\item Continuous monoids satisfying $a \sqleq a^2$ are presented by the definability of the term $\bigvee_{k \in \Nat} x^k$.
		
		\item The equation $\bigvee_{k \in \Nat} x^k = e$ presents continuous monoids satisfying $a \sqleq a^2$ and $\bigsqcup_{k \in \Nat} a^k = e$ for all $a$.
		
	\end{enumerate}
\end{example}

\begin{remark}
	\label{R:HSP}
	In the classical universal algebra Birkhoff's Variety Theorem states that varieties are precisely the HSP classes, i.e., closed under homomorphic images, subalgebras, and products.
	In $\Sigma\text{-}\CPO$ we have the corresponding constructions:
	\begin{enumerate}[label=(\roman*)]
		
		\item A \emph{product} of algebras $A_i$ ($i \in I$) is their cartesian product $\prod_{i \in I} A_i$ with operations and order given coordinate-wise.
		
		\item A \emph{subalgebra} of an algebra $A$ is a subobject $m : B \to A$ such that $m$ is an embedding: $x \sqleq y$ holds in $B$ iff $m(x) \sqleq m(y)$.
		
		\item A \emph{homomorphic image} of an algebra $A$ is a quotient object $e : A \to B$ such that $e$ is surjective.
		
	\end{enumerate}
\end{remark}

\begin{lemma}
	\label{L:HSP}
	Every variety of continuous algebras is an HSP-class.
\end{lemma}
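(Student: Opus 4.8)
The plan is to show that a variety $\Vvar$ presented by a set of equations is closed under each of the three constructions of Remark~\ref{R:HSP}. For each construction it suffices to check that satisfaction of a single equation $t = t'$, in the sense of Definition~\ref{D:sat}, is preserved. The heart of every case is a \emph{transfer lemma}, proved by structural induction on extended terms, describing how the partial interpretation $f^@$ interacts with the construction. The only nontrivial inductive step is always the formal join $t = \bigvee_{k \in \Nat} t_k$, where one must track both the definedness predicate and the $\omega$-chain condition $f^@(t_k) \sqleq f^@(t_{k+1})$.

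\emph{Products.} Let $A = \prod_{i \in I} A_i$ with projections $p_i$, and let $f : V \to A$ have components $f_i = p_i \comp f$. Since operations and order in $A$ are coordinatewise, and $\omega$-joins in a product are formed coordinatewise, a structural induction shows that $f^@(t)$ is defined iff every $f_i^@(t)$ is defined, and then $f^@(t) = (f_i^@(t))_{i \in I}$. For the join step the chain condition in $A$ holds iff it holds in each $A_i$, and the resulting join is taken coordinatewise. Hence if each $A_i$ satisfies $t = t'$ then so does $A$.

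\emph{Homomorphic images and subalgebras.} Both rest on the transfer lemma for a homomorphism $h : A \to B$: for every $f : V \to A$ and every extended term $t$, if $f^@(t)$ is defined then $(h \comp f)^@(t)$ is defined and $h(f^@(t)) = (h \comp f)^@(t)$. For ordinary terms this is just that $h$ is a $\Sigma$-homomorphism; for $t = \bigvee_{k} t_k$ it uses that $h$, being a morphism of $\CPO$, is monotone (carrying the chain $(f^@(t_k))$ to a chain in $B$) and continuous (so $h(\bigsqcup_k f^@(t_k)) = \bigsqcup_k h(f^@(t_k))$). For a homomorphic image $e : A \to B$ with $A \in \Vvar$, pick, using surjectivity of $e$ and discreteness of $V$, a lift $f : V \to A$ of a given $g : V \to B$, so $e \comp f = g$. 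Since $A$ satisfies $t = t'$, both $f^@(t), f^@(t')$ are defined and equal, and the transfer lemma gives $g^@(t) = e(f^@(t)) = e(f^@(t')) = g^@(t')$, both defined; thus $B \in \Vvar$. For a subalgebra $m : B \to A$ with $A \in \Vvar$ we additionally need definedness in $A$ to reflect back to $B$: because $m$ reflects the order, $(m(b_k))$ is an $\omega$-chain in $A$ iff $(b_k)$ is one in $B$, and because $m$ is continuous the join in $A$ equals $m$ of the join in $B$. A structural induction upgrades the transfer lemma to the equivalence that $f^@(t)$ is defined in $B$ iff $(m \comp f)^@(t)$ is defined in $A$, with $m(f^@(t)) = (m \comp f)^@(t)$. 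Since the right-hand values are defined and equal, injectivity of $m$ forces $f^@(t) = f^@(t')$, both defined, so $B \in \Vvar$.

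The main obstacle is the bookkeeping forced by the partiality of $f^@$ in the presence of formal joins: one must propagate the definedness predicate, not merely the values, through each construction, and verify that the $\omega$-chain condition is correctly preserved (for products and quotients) or both preserved and reflected (for embeddings, where the embedding property is indispensable). Once these inductions are set up, everything else is routine.
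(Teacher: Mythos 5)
Your proof is correct and takes the same route as the paper, which does not write the argument out but simply defers to \cite{anr}, pp.~339--340; in particular your transfer lemma for homomorphisms is exactly the paper's Lemma~\ref{L:help} (\cite{anr}, Proposition~3.5), and your coordinatewise induction for products and the definedness-reflecting induction for order-embeddings are the standard verifications carried out there. The bookkeeping you emphasize --- propagating the definedness predicate and the $\omega$-chain condition through each of the H, S, P constructions --- is precisely what the paper means by saying that proof ``works in our setting without any changes.''
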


The proof in~\cite{anr} on pp.~339-340 works in our setting without any changes.
The main result of~\cite{anr} is the converse implication, but the proof does not work for our extended terms (more special than the terms in op.~cit.):

\begin{openproblem}
	Is every HSP-class in $\Sigma\text{-}\CPO$ a variety of continuous algebras?
\end{openproblem}

\begin{definition}
	A subset $X$ of a cpo $C$ is \emph{dense} if the only sub-cpo containing $X$ is all of $C$.
\end{definition}

\begin{example}
	\label{E:dense}
	Given a directed diagram $D$ with a colimit $a_s : A_s \to A$ ($s \in S$) in $\CPO$, the union $\bigcup_{s \in S} a_s [A_s]$ is dense in $A$.
	Indeed, if a sub-cpo $A'$ contains that union, then the codomain restrictions $a_s' : A_s \to A'$ form a cocone of $D$ in $\CPO$.
	From the fact that this cocone factorizes through $a_s$ it follows that $A' = A$.
\end{example}

\begin{lemma}
	\label{L:dense}
	If $X$ is dense in a cpo $C$, then $X^n$ is dense in $C^n$ for each $n \in \Nat$.
\end{lemma}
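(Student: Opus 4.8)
The plan is to reduce to the two-factor case and then induct on $n$. Since $C^n = C^{n-1} \times C$ and $X^n = X^{n-1} \times X$, it suffices to prove the following: if $X$ is dense in $C$ and $X'$ is dense in $C'$, then $X \times X'$ is dense in $C \times C'$. The cases $n = 0$ and $n = 1$ being trivial, the lemma then follows by a straightforward induction, applying this two-factor statement with $C' = C$ and $X' = X$ at each step.

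So let $Y \subseteq C \times C'$ be an arbitrary sub-cpo containing $X \times X'$; I must show $Y = C \times C'$. The facts I would invoke throughout are that both the order and the $\omega$-joins of $C \times C'$ are computed coordinatewise, so that a subset is a sub-cpo exactly when it is closed under coordinatewise joins of $\omega$-chains.

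First I would fix $x \in X$ and consider the fibre $F_x = \{ c' \in C' \mid (x,c') \in Y \}$. It contains $X'$, because $\{x\} \times X' \subseteq X \times X' \subseteq Y$. Moreover $F_x$ is a sub-cpo of $C'$: given an $\omega$-chain $(c'_k)$ in $F_x$, the pairs $(x,c'_k)$ form an $\omega$-chain in $Y$ whose join $(x, \bigsqcup_k c'_k)$ again lies in $Y$, whence $\bigsqcup_k c'_k \in F_x$. Density of $X'$ then forces $F_x = C'$, i.e.\ $\{x\} \times C' \subseteq Y$ for every $x \in X$, and hence $X \times C' \subseteq Y$. Symmetrically, I would then fix $c' \in C'$ and consider $G_{c'} = \{ c \in C \mid (c,c') \in Y \}$. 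By the previous step it contains $X$, and the same coordinatewise argument shows it is a sub-cpo of $C$; density of $X$ gives $G_{c'} = C$. Thus $C \times \{c'\} \subseteq Y$ for every $c' \in C'$, which is to say $Y = C \times C'$, as required.

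I do not expect any serious obstacle here: the only point requiring care is the coordinatewise description of order and joins in the product cpo, which is precisely what makes each fibre $F_x$ and $G_{c'}$ a sub-cpo and allows density to be applied one coordinate at a time.
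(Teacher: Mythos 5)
Your proof is correct, and it follows the natural (and essentially the only reasonable) route: the paper's extended abstract omits the proof of this lemma, and the intended argument is exactly your fibrewise one, reducing to two factors and using that order and $\omega$-joins in a product of cpos are computed coordinatewise so that each fibre of a sub-cpo is itself a sub-cpo. No gaps.
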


\begin{proposition}
	\label{P:free}
	Every variety $\Vvar$ of continuous algebras has free algebras: the forgetful functor $U_\Vvar : \Vvar \to \CPO$ has a left adjoint $F_\Vvar : \CPO \to \Vvar$.
\end{proposition}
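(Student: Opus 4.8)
The plan is to obtain $F_\Vvar$ from the General Adjoint Functor Theorem applied to $U_\Vvar : \Vvar \to \CPO$, drawing completeness and limit preservation from the HSP description (Lemma~\ref{L:HSP}) and the solution-set condition from a cardinality bound on generated subalgebras.

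First I would check that $\Vvar$ is complete and that $U_\Vvar$ preserves limits. Limits in $\Sigma\text{-}\CPO$ are created by the underlying functor to $\CPO$: a product carries the coordinatewise cpo structure and operations (Remark~\ref{R:HSP}), and the equalizer of a pair $h_0,h_1 : A \to B$ is the set $\{a \in A : h_0(a) = h_1(a)\}$, which is a sub-cpo because the $h_j$ are continuous and is closed under the operations because the $h_j$ are homomorphisms. Since $\Vvar$ is closed under products and subalgebras (Lemma~\ref{L:HSP}), it is closed under both constructions; hence $\Vvar$ has all small limits and $U_\Vvar$ preserves them.

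Next I would verify the solution-set condition at a fixed cpo $M$. Given any continuous $f : M \to U_\Vvar A$ with $A \in \Vvar$, let $\langle f[M]\rangle \hookrightarrow A$ be the least subalgebra containing $f[M]$, i.e.\ the least sub-cpo closed under all $\sigma_A$; it lies in $\Vvar$ by closure under subalgebras, and $f$ factors through it. The key point is that $|\langle f[M]\rangle|$ is bounded by a cardinal depending only on $|M|$ and $\Sigma$: with $\kappa = \max(|M|,|\Sigma|,\aleph_0)$ and $\lambda = \kappa^{\aleph_0}$, one builds $\langle f[M]\rangle$ as a transfinite union, at each successor stage adjoining the images of the finitary operations and the joins of those $\omega$-chains from the previous stage whose join exists in $A$. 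Each stage keeps the cardinality at most $\lambda$ since $\lambda^{\aleph_0}=\lambda$, and by regularity of $\omega_1$ the stage indexed by $\omega_1$ is already closed under $\omega$-joins (every $\omega$-chain in it already appears at some earlier countable stage). Consequently the $\Vvar$-algebras of cardinality at most $\lambda$, together with all continuous maps from $M$ into them, form, up to isomorphism, a set, and this set is a solution set: every $f$ factors through the inclusion of one of its representatives.

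With completeness, limit preservation, and the solution-set condition in hand, the General Adjoint Functor Theorem yields a left adjoint $F_\Vvar : \CPO \to \Vvar$; the resulting adjunction is $\CPO$-enriched because $U_\Vvar$ is an enriched functor and the bijection $\Vvar(F_\Vvar M, A) \cong \CPO(M, U_\Vvar A)$ is an order-isomorphism, both orders being computed pointwise. The step I expect to be the main obstacle is precisely the cardinality bound of the third paragraph: because satisfaction of an equation $\bigvee_{k \in \Nat} t_k = t'$ forces joins of $\omega$-chains to exist, one must confirm that closing $f[M]$ under the operations \emph{and} under existing $\omega$-joins terminates at the set-bounded stage $\omega_1$ rather than growing unboundedly, and one must keep the two kinds of closure (the finitary operations and the sub-cpo closure of Example~\ref{E:dense}) in step throughout the transfinite construction.
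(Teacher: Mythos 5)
Your argument is correct, but it takes a genuinely different route from the paper. The paper does not apply the General Adjoint Functor Theorem to $U_\Vvar$ directly; instead it factors the problem through $\Sigma\text{-}\CPO$: free $\Sigma$-algebras are the explicit term algebras $T_\Sigma M$ of Example~\ref{E:free-c}, and the variety $\Vvar$ is then shown to be a \emph{reflective} subcategory of $\Sigma\text{-}\CPO$ by invoking Theorem~16.8 of~\cite{ahs} --- $\Sigma\text{-}\CPO$ is complete and wellpowered and carries the factorization system (dense image, closed subalgebra embedding), and $\Vvar$ is closed under products and subalgebras by Lemma~\ref{L:HSP}; composing the reflector with the free term-algebra functor gives $F_\Vvar$. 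Your route instead verifies the solution-set condition by hand, via the transfinite closure of $f[M]$ under the finitary operations and $\omega$-joins, stabilizing at stage $\omega_1$ with the bound $\lambda=\kappa^{\aleph_0}$; this is exactly the cardinality bookkeeping that the paper's appeal to wellpoweredness and the factorization system absorbs silently. Both proofs lean on the same input from Lemma~\ref{L:HSP} (closure under products and subalgebras), and your completeness/limit-preservation step matches the paper's implicit use of limits created over $\CPO$. What the paper's route buys is a concrete description of the free algebra as a reflection (a quotient in the dense-image sense) of the term algebra $T_\Sigma M$, plus reuse of a factorization system that recurs elsewhere in the development; what your route buys is self-containedness, at the price of the $\omega_1$-stage closure argument, which you correctly identify as the delicate point and carry out soundly (note that in an $\omega$-cpo every $\omega$-chain has a join, so your caveat ``whose join exists in $A$'' is vacuous). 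Your closing remark on enrichment is also fine: an order-isomorphism of pointwise-ordered hom-cpos automatically preserves $\omega$-joins.
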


\begin{proof}[Proof sketch]
	For $\Vvar = \Sigma\text{-}\CPO$ we have described the free algebras in Example~\ref{E:free-c}.
	Every variety $\Vvar$ is closed under products and subalgebras by Lemma~\ref{L:HSP}.
	The category $\Sigma\text{-}\CPO$ is complete and wellpowered.
	It has the factorization system $(\EE,\MM)$ where $\EE$ consists of homomorphisms $e : A \to B$ with $e[A]$ dense in $B$ and $\MM$ consists of embeddings of closed subalgebras.
	(Indeed, let a continuous homomorphism $f : A \to C$ have such a factorization $f = m \comp e$ for $e : A \to X$ and $m : X \to C$ in $\Met$.
	Then there is a unique algebra structure on $X$ making $e$ and $m$ homomorphisms.)
	It follows that $\Vvar$ is a reflective subcategory: the inclusion functor $E : \Vvar \to \Sigma\text{-}\CPO$ has a left adjoint by Theorem~16.8 in~\cite{ahs}.
	Since $U_\Vvar = U_\Sigma \comp E$, we conclude that $U_\Vvar$ has a left adjoint.
\end{proof}

\begin{notation}
	\phantom{phantom}
	\begin{enumerate}
		
		\item We denote by $\Tmon_\Vvar$ the free-algebra monad of a variety $\Vvar$ on $\CPO$.
		Its underlying functor is $T_\Vvar = U_\Vvar \comp F_\Vvar$.
		
		\item Concrete categories (and functors) over $\CPO$ are defined analogously to Remark~\ref{R:beck}.
		
	\end{enumerate}
\end{notation}

\begin{example}
	For $\Vvar = \Sigma\text{-}\CPO$ the monad $\Tmon_\Sigma$ of Example~\ref{E:free-c} assigns to a cpo $M$ the cpo $T_\Sigma M$.
	As in Example~\ref{E:ex}, $\Tmon_\Sigma$ is strongly finitary.
\end{example}

\begin{proposition}
	Every variety $\Vvar$ of continuous algebras is concretely isomorphic to the category $\CPO^{\Tmon_\Vvar}$: the comparison functor $K_\Vvar : \Vvar \to \CPO^{\Tmon_\Vvar}$ is a concrete isomorphism.
\end{proposition}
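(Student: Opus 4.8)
The plan is to prove that the forgetful functor $U_\Vvar : \Vvar \to \CPO$ is strictly monadic and then read off the proposition from the Beck monadicity theorem (in its enriched form), exactly as the classical statement (\cite{maclane:cwm}, Theorem~VI.8.1) and its $\Met$-analogue (Proposition~\ref{P:beck}) are obtained. We already have the adjunction $F_\Vvar \dashv U_\Vvar$ by Proposition~\ref{P:free}, hence the monad $\Tmon_\Vvar$ and the concrete comparison functor $K_\Vvar$, which by construction satisfies $U^{\Tmon_\Vvar} \comp K_\Vvar = U_\Vvar$. Thus it is enough to prove that $U_\Vvar$ \emph{creates} coequalizers of $U_\Vvar$-split parallel pairs (those pairs $f,g$ in $\Vvar$ for which $U_\Vvar f, U_\Vvar g$ admit a split coequalizer in $\CPO$): the enriched Beck theorem then yields that $K_\Vvar$ is an isomorphism of $\CPO$-enriched categories, and since it is concrete it is automatically a concrete isomorphism.

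First I would settle the case $\Vvar = \Sigma\text{-}\CPO$, i.e.\ show that $U_\Sigma$ creates such coequalizers. Let $f,g : A \to B$ be a parallel pair in $\Sigma\text{-}\CPO$ and let $e : U_\Sigma B \to C$ be a split coequalizer of $U_\Sigma f, U_\Sigma g$ in $\CPO$. A split coequalizer is an absolute colimit, hence preserved by every functor; in particular each finite-power functor $(\blank)^n$ sends it to a split coequalizer $e^n : (U_\Sigma B)^n \to C^n$. Using the universal property of $e^n$, each continuous operation $\sigma_B : B^n \to B$ descends along $e$ to a unique continuous $\sigma_C : C^n \to C$ with $e \comp \sigma_B = \sigma_C \comp e^n$. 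This equips $C$ with the unique $\Sigma$-algebra structure making $e$ a homomorphism, and makes $e : B \to C$ the coequalizer in $\Sigma\text{-}\CPO$; this is precisely creation, so $\Sigma\text{-}\CPO \cong \CPO^{\Tmon_\Sigma}$. This step is the verbatim $\CPO$-analogue of the classical operation-lifting argument.

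Next I would bootstrap to an arbitrary variety $\Vvar$, which is a full subcategory of $\Sigma\text{-}\CPO$. Given a $U_\Vvar$-split pair $f,g$ in $\Vvar$, its image in $\Sigma\text{-}\CPO$ is again $U_\Sigma$-split, so by the previous step it has a coequalizer $e : B \to C$ created in $\Sigma\text{-}\CPO$. Since the split coequalizer $e$ is a split epimorphism in $\CPO$, it is surjective, so $C$ is a homomorphic image of the algebra $B \in \Vvar$. By Lemma~\ref{L:HSP} the variety is an HSP-class, hence closed under homomorphic images, so $C \in \Vvar$. Because $\Vvar$ is full, $e$ remains a coequalizer in $\Vvar$ and the lifting data is inherited; thus $U_\Vvar$ creates this coequalizer as well.

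The main obstacle I anticipate is careful enriched bookkeeping rather than any single hard idea. Two points need attention: that the operation-lifting step is valid $\CPO$-enriched (it is, because split coequalizers are absolute and so are preserved as conical colimits by the enriched functors $(\blank)^n$), and that the enriched monadicity theorem applied to $F_\Vvar \dashv U_\Vvar$ delivers an isomorphism, not merely an equivalence, of categories — which it does precisely because $U_\Vvar$ \emph{creates}, rather than merely preserves and reflects, the relevant coequalizers. Granting these, creation of $U_\Vvar$-split coequalizers yields strict monadicity, and the proposition follows.
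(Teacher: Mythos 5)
Your proposal is correct, and it supplies an actual argument where the paper has essentially none: the paper's proof of this proposition is a one-line deferral to the classical case (via Proposition~\ref{P:beck}, ultimately Mac Lane's Theorem~VI.8.1, whose proof constructs the inverse of the comparison functor directly from the monad structure map rather than quoting Beck). Your route through the precise monadicity theorem is a standard alternative and all its ingredients are already in place in the paper: the adjunction exists by Proposition~\ref{P:free}; split coequalizers are absolute, so the operation-lifting argument for $\Sigma\text{-}\CPO$ goes through exactly as you describe; and the one genuinely variety-specific point --- that the split coequalizer object lands back in $\Vvar$ --- is precisely closure under homomorphic images, which Lemma~\ref{L:HSP} supplies (the coequalizer map is a split epimorphism in $\CPO$, hence surjective, and fullness of $\Vvar$ in $\Sigma\text{-}\CPO$ finishes the creation claim). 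The enriched bookkeeping you flag is also harmless here: both $\Vvar$ and $\CPO^{\Tmon_\Vvar}$ are concrete over $\CPO$ with hom-cpos inherited from $\CPO$, so a concrete isomorphism of underlying ordinary categories is automatically a $\CPO$-enriched one. In short, your decomposition (strict monadicity of $U_\Sigma$, then restriction to $\Vvar$ via H-closure) is a clean and complete substitute for the citation the paper relies on.
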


This is, as Proposition~\ref{P:beck}, analogous to the classical case.

\begin{lemma}[\cite{anr}, Proposition~3.5]
	\label{L:help}
	Let $h : A \to B$ be a morphism in $\Sigma\text{-}\CPO$ and $t$ an extended term.
	Given an interpretation $f : V \to A$ with $f^@(t)$ defined, then $(hf)^@$ is also defined: $(hf)^@ (t) = h(f^@(t))$.
\end{lemma}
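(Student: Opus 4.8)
The plan is to argue by structural induction on the extended term $t$, tracking the definedness of the partial function $f^@$ at each stage. There are exactly two cases, matching the inductive definition of $\T_\Sigma V$: the case of an ordinary term and the case of a formal join.

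For the base case I take $t \in T_\Sigma V$, so that $f^@(t) = f^\sharp(t)$ is always defined. Here I would invoke the universal property of the free algebra $T_\Sigma V$: both $h \comp f^\sharp$ and $(hf)^\sharp$ are $\Sigma$-homomorphisms $T_\Sigma V \to B$ extending the interpretation $hf : V \to B$, so by uniqueness of the homomorphic extension they agree. Thus $(hf)^@(t) = (hf)^\sharp(t) = h(f^\sharp(t)) = h(f^@(t))$, as required.

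For the inductive step I take $t = \bigvee_{k \in \Nat} t_k$ and assume $f^@(t)$ is defined. By the definition of $f^@$, each $f^@(t_k)$ is defined, the chain condition $f^@(t_k) \sqleq f^@(t_{k+1})$ holds, and $f^@(t) = \bigsqcup_{k \in \Nat} f^@(t_k)$. The induction hypothesis applied to each $t_k$ gives that $(hf)^@(t_k)$ is defined and equals $h(f^@(t_k))$. Now the two defining properties of a morphism $h$ in $\Sigma\text{-}\CPO$ come into play. First, $h$ is monotone, so $(hf)^@(t_k) = h(f^@(t_k)) \sqleq h(f^@(t_{k+1})) = (hf)^@(t_{k+1})$; hence the chain condition holds for the $t_k$ under $(hf)^@$, and $(hf)^@(t)$ is therefore defined with $(hf)^@(t) = \bigsqcup_{k \in \Nat} (hf)^@(t_k)$. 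Second, $h$ is continuous, i.e.\ it preserves joins of $\omega$-chains, which yields $\bigsqcup_{k \in \Nat} h(f^@(t_k)) = h\bigl(\bigsqcup_{k \in \Nat} f^@(t_k)\bigr) = h(f^@(t))$. Combining these equalities closes the induction.

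I expect no serious obstacle: the argument is a routine structural induction. The only point needing care is the bookkeeping of the partial function — at the join step one must verify that the chain condition for $(hf)^@$ actually holds, which is exactly where monotonicity of $h$ enters, whereas continuity of $h$ is what makes the resulting value coincide with $h$ applied to the join, rather than merely being some join of the images.
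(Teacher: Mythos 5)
Your proof is correct and complete: the base case via uniqueness of the homomorphic extension from the free algebra $T_\Sigma V$, and the join case using monotonicity of $h$ to propagate the chain condition and continuity of $h$ to identify the joins, is exactly the routine structural induction one expects. The paper itself gives no proof here (it only cites Proposition~3.5 of the reference on continuous algebras), so there is nothing different to compare against; your argument is the standard one.
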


\begin{proposition}
	\label{P:create}
	The forgetful functor $U_\Sigma : \Sigma\text{-}\CPO \to \CPO$ creates directed colimits: given a directed diagram $D$ of continuous algebras with a colimit $c_i : UD_i \to C$ in $\CPO$, there exists a unique algebra structure on $C$ making all $c_i$ homomorphisms; moreover the resulting cocone $c_i : D_i \to C$ is a colimit of $D$ in $\Sigma\text{-}\CPO$.
\end{proposition}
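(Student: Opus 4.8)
The plan is to exploit that $\CPO$ is cartesian closed (Example~\ref{E:smc}), so that by Theorem~\ref{T:ccc} directed colimits commute with finite products. Concretely, for each $n \in \Nat$ the power cocone $c_i^n : (UD_i)^n \to C^n$ is again a colimit cocone, now of the directed diagram $d \mapsto (UD_d)^n$ (whose connecting maps are the $n$-th powers of those of $D$). Since $\Sigma$ is finitary, only such finite powers are needed.

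First I would define the operations on $C$. Fix $\sigma \in \Sigma_n$. Because every connecting map of $D$ is a homomorphism and the $c_i$ form a cocone, the morphisms $c_i \comp \sigma_{D_i} : (UD_i)^n \to C$ are compatible with the connecting maps of the diagram $d \mapsto (UD_d)^n$, i.e.\ they form a cocone with codomain $C$. By the colimit property just recalled, there is a unique continuous map $\sigma_C : C^n \to C$ with
\[
\sigma_C \comp c_i^n = c_i \comp \sigma_{D_i} \qquad \text{for all } i.
\]
This equation says precisely that each $c_i$ is a $\Sigma$-homomorphism, and continuity of $\sigma_C$ is automatic, being a morphism of $\CPO$. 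Moreover, any algebra structure on $C$ turning every $c_i$ into a homomorphism must satisfy the same equation; since $c_i^n$ is a colimit cocone, $\sigma_C$ is thereby uniquely determined. This gives the first assertion: existence and uniqueness of the algebra structure on $C$.

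Next I would check that $c_i : D_i \to C$ is a colimit in $\Sigma\text{-}\CPO$. Let $f_i : D_i \to B$ be a cocone of homomorphisms. The underlying continuous maps factor uniquely through the colimit in $\CPO$: there is a unique continuous $f : C \to B$ with $f \comp c_i = Uf_i$. To see that $f$ is a homomorphism, I precompose $f \comp \sigma_C$ and $\sigma_B \comp f^n$ with the colimit cocone $c_i^n$:
\[
f \comp \sigma_C \comp c_i^n = f \comp c_i \comp \sigma_{D_i} = Uf_i \comp \sigma_{D_i} = \sigma_B \comp (Uf_i)^n = \sigma_B \comp f^n \comp c_i^n.
\]
Since $c_i^n$ is a colimit cocone, factorizations through it are unique, so $f \comp \sigma_C = \sigma_B \comp f^n$; hence $f$ is a homomorphism, and its uniqueness as a homomorphism is inherited from its uniqueness as a continuous map.

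The main obstacle is the commutation of directed colimits with finite products, i.e.\ that $c_i^n$ is again a colimit cocone; once this is supplied by Theorem~\ref{T:ccc}, everything else is routine diagram chasing. The only point worth stressing is that the "$c_i^n$ is jointly epic'' step used above is nothing but the uniqueness clause in the universal property of the colimit $C^n$, not a separate fact.
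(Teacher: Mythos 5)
Your proof is correct and follows exactly the route of the paper: cartesian closedness of $\CPO$ gives, via Theorem~\ref{T:ccc}, that the powers $c_i^n$ are again colimit cocones, and the operations $\sigma_C$ together with the colimit property in $\Sigma\text{-}\CPO$ then follow by the standard factorization arguments you spell out. The paper only sketches these diagram chases; your write-up supplies the same details one would expect in the full version.
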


\begin{proof}[Proof sketch]
	This follows from $\CPO$ being cartesian closed.
	Thus directed colimits commute with finite products (Theorem~\ref{T:ccc}).
	Given an $n$-ary symbol $\sigma \in \Sigma$, from the fact that $c_i^n : UD_i^n \to C^n$ is a directed colimit it follows that there is a unique morphism $\sigma_C : C_i^n \to C$ such that the given operations $\sigma_{C_i} : C_i^n \to C_i$ fulfil $\sigma_C \comp c_i^n = c_i \comp \sigma_{C_i}$.
	That is, $c_i$ are homomorphisms.
	The verification that this yields a colimit of $D$ in $\CPO$ is easy.
\end{proof}

\begin{proposition}
	\label{P:creates2}
	The functor $U_\Sigma$ creates reflexive coinserters.
\end{proposition}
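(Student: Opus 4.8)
The plan is to follow the template of Proposition~\ref{P:create}, simply replacing the use of Theorem~\ref{T:ccc} (directed colimits commute with finite products) by Proposition~\ref{P:ccc} (reflexive coinserters commute with finite products). Fix a reflexive pair $f_0, f_1 : A \to B$ in $\Sigma\text{-}\CPO$ with joint splitting $d$ (so $f_0 \comp d = f_1 \comp d = \id_B$), and let $c : U_\Sigma B \to C$ be the coinserter of $U_\Sigma f_0, U_\Sigma f_1$ in $\CPO$. The underlying pair is again reflexive, with splitting $U_\Sigma d$, so Proposition~\ref{P:ccc} applies: for every $n$ the power $c^n : B^n \to C^n$ is the coinserter of the pair $f_0^n, f_1^n$ (which is reflexive via $d^n$). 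This is the decisive ingredient.

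First I would transport the operations. For an $n$-ary symbol $\sigma$ consider $c \comp \sigma_B : B^n \to C$. Since $f_0, f_1$ are homomorphisms we have $\sigma_B \comp f_i^n = f_i \comp \sigma_A$, and from $c \comp f_0 \sqleq c \comp f_1$ (the defining property of the coinserter $c$) precomposition with $\sigma_A$ gives $c \comp \sigma_B \comp f_0^n \sqleq c \comp \sigma_B \comp f_1^n$. The universal property of the coinserter $c^n$ therefore yields a unique continuous $\sigma_C : C^n \to C$ with $\sigma_C \comp c^n = c \comp \sigma_B$, and this identity is exactly the assertion that $c$ is a $\Sigma$-homomorphism. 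A coinserter is epic --- if $u \comp c = v \comp c$ then both $u \comp c \sqleq v \comp c$ and $v \comp c \sqleq u \comp c$ hold, so $u = v$ by the order part of the universal property (Example~\ref{E:coins}) --- hence $c^n$ is epic and $\sigma_C$ is the \emph{unique} operation making $c$ a homomorphism. This is the creation of the algebra structure.

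It then remains to check that $c : B \to C$ is a coinserter in $\Sigma\text{-}\CPO$. For the factorization property, let $c' : B \to C'$ be a homomorphism with $c' \comp f_0 \sqleq c' \comp f_1$; the coinserter $c$ in $\CPO$ provides a unique continuous $h$ with $h \comp c = c'$, and $h$ is a homomorphism because $h \comp \sigma_C \comp c^n = h \comp c \comp \sigma_B = c' \comp \sigma_B = \sigma_{C'} \comp (h \comp c)^n = \sigma_{C'} \comp h^n \comp c^n$, so cancelling the epimorphism $c^n$ gives $h \comp \sigma_C = \sigma_{C'} \comp h^n$. For the order part, homs in $\Sigma\text{-}\CPO$ are ordered pointwise exactly as in $\CPO$, so whenever homomorphisms $u, v : C \to D$ satisfy $u \comp c \sqleq v \comp c$ the coinserter property in $\CPO$ already forces $u \sqleq v$.

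The main obstacle is precisely the step isolated at the outset: guaranteeing that $c^n$ is again a coinserter of the $n$-th powers. This fails for arbitrary coinserters and is exactly where reflexivity enters, being supplied by Proposition~\ref{P:ccc}; once it is available, everything reduces to a routine transfer of universal properties across the epimorphism $c^n$.
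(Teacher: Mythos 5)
Your proof is correct and takes essentially the same route as the paper: both arguments hinge on the fact that $(\blank)^n$ preserves reflexive coinserters (Proposition~\ref{P:ccc}, Example~\ref{E:power}), use the universal property of $c^n$ to obtain the unique operation $\sigma_C$, and then verify the coinserter property in $\Sigma\text{-}\CPO$ by a routine transfer along the epimorphism $c^n$. You merely spell out the details that the paper leaves as ``easily seen''.
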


\begin{proof}[Proof sketch]
	Indeed, if $e : B \to C$ is such a coinserter of $f_0,f_1 : A \to B$, then for an $n$-ary symbol $\sigma$ we have a coinserter $e^n$ of $f_0^n,f_1^n$ (Example~\ref{E:power}).
	We thus obtain a unique $\sigma_C : C^n \to C$ with $\sigma_C \comp e^n = e \comp \sigma_B$, hence $e$ is a homomorphism which is easily seen to be the coinserter in $\Sigma\text{-}\CPO$.
\end{proof}

\begin{theorem}
	\label{T:dir-var}
	Every variety of continuous algebras is closed under directed colimits in $\Sigma\text{-}\CPO$.
\end{theorem}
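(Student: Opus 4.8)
The plan is to exploit that $U_\Sigma$ creates directed colimits (Proposition~\ref{P:create}): given a directed diagram $(D_i)_{i \in I}$ in $\Sigma\text{-}\CPO$ with every $D_i$ in $\Vvar$, its colimit in $\Sigma\text{-}\CPO$ is carried by the colimit $c_i : U_\Sigma D_i \to C$ in $\CPO$ equipped with the unique algebra structure making each $c_i$ a homomorphism. It then remains to check that $C$ lies in $\Vvar$, i.e.\ satisfies every defining equation $t = t'$. Fixing such an equation, I would let $x_0,\dots,x_{n-1}$ be variables containing all those occurring in $t$ and $t'$, and set $S \subseteq C^n$ to be the collection of tuples $(a_0,\dots,a_{n-1})$ for which the interpretation $g : V \to C$ with $g(x_j) = a_j$ has both $g^@(t)$ and $g^@(t')$ defined and equal. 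The goal is then exactly $S = C^n$.

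First I would show that $S$ contains a dense subset of $C^n$. By Example~\ref{E:dense} the union $\bigcup_{i \in I} c_i[U_\Sigma D_i]$ is dense in $C$, hence by Lemma~\ref{L:dense} its $n$-th power $(\bigcup_{i} c_i[U_\Sigma D_i])^n$ is dense in $C^n$. A tuple in this power has, by directedness of $I$, all of its coordinates inside a single $c_i[U_\Sigma D_i]$, so it equals $c_i^n(b_0,\dots,b_{n-1})$ for some $(b_0,\dots,b_{n-1}) \in (U_\Sigma D_i)^n$. Since $D_i$ lies in $\Vvar$, the interpretation $b : V \to D_i$ with $b(x_j) = b_j$ satisfies $b^@(t) = b^@(t')$ (both defined); applying Lemma~\ref{L:help} to the homomorphism $c_i$ transports this to $(c_i \comp b)^@(t) = (c_i \comp b)^@(t')$, and since $c_i \comp b$ is precisely $g$, the tuple lies in $S$.

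The main work lies in proving that $S$ is a sub-cpo of $C^n$, for then density forces $S = C^n$. For this I would establish a continuity lemma for the interpretation of extended terms, by structural induction: for a fixed extended term $s$ in the variables $x_0,\dots,x_{n-1}$, if $(g^k)_{k \in \Nat}$ is an $\omega$-chain in $C^n$ with join $g$ such that every $(g^k)^@(s)$ is defined, then $g^@(s)$ is defined and equals $\bigsqcup_{k} (g^k)^@(s)$. The variable and composite cases are immediate from continuity of the operations $\sigma_C$; the crucial case is a formal join $s = \bigvee_{l \in \Nat} s_l$, where the induction hypothesis gives $g^@(s_l) = \bigsqcup_k (g^k)^@(s_l)$ for each $l$, the required monotonicity $g^@(s_l) \sqleq g^@(s_{l+1})$ passes from the $g^k$ to the join over $k$, and definedness of $g^@(s)$ together with its value follows by interchanging the two $\omega$-indexed joins $\bigsqcup_l \bigsqcup_k$. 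Granting this lemma, a chain in $S$ with join $g$ satisfies $g^@(t) = \bigsqcup_k (g^k)^@(t) = \bigsqcup_k (g^k)^@(t') = g^@(t')$, so $g \in S$.

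I expect the formal-join case of the continuity lemma to be the principal obstacle, since it is there that both the definedness clause of $f^@$ and the interchange of directed joins must be handled at once; the remaining steps are routine, and density then closes the argument, yielding $S = C^n$ and hence $C \in \Vvar$.
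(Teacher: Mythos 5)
Your proof is correct and follows essentially the same route as the paper's: density of $\bigcup_i c_i[U_\Sigma D_i]$ (Example~\ref{E:dense} and Lemma~\ref{L:dense}) plus Lemma~\ref{L:help} handle the tuples coming from the $D_i$, and a double induction on the term structure and on the $\omega$-chain of interpretations handles closure under joins. The only cosmetic difference is that the paper first reduces satisfaction of $t=t'$ to definability of a single extended term (Remark~\ref{R:defi}) and then proves preservation of definability, whereas you keep the pair $t,t'$ and prove the slightly stronger continuity statement $g^{@}(s)=\bigsqcup_k (g^k)^{@}(s)$ directly; the underlying work (in particular the interchange of the two $\omega$-joins in the formal-join case) is the same.
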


\begin{proof}[Proof sketch]
	Let $a_s : A_s \to A$ ($s \in S$) be a directed colimit in $\Sigma\text{-}\CPO$.
	Consider an extended term $t \in \T_\Sigma V$ such that in each of the algebras $A_s$ $t$ is definable.
	We prove that $t$ is definable in $A$.
	This concludes the proof using Remark~\ref{R:defi}.
	We use structural induction:
	the statement is obvious if $t$ is a classical term.
	Thus we need to prove the induction step: let $t = \bigvee_{k \in \Nat} t_k$ with each $t_k$ definable in all $A_s$ ($s \in S$), then $t$ is definable in $A$.
	By the definition of extended terms we know that there is a finite set $V_0 \subseteq V$ of variables with $t_k \in \T_\Sigma V_0$ for all $k \in \Nat$.
	Thus we can work with interpretations $f : V_0 \to A$.
	The colimit cocone $a_s : A_s \to A$ has the property that the set $X = \bigcup_{s \in S} a_s[A_s]$ is dense (Example~\ref{E:dense}).
	In the cpo $[V_0,A]$ of all interpretations the subset $[V_0,X]$ is thus dense, too (Lemma~\ref{L:dense}).
	We use this to prove that $f^@(t)$ is defined by structural induction on $f$:
	\begin{enumerate}[label=(\roman*)]
		
		\item if $f[V_0] \subseteq X$ then $f^@$ is defined, and
		
		\item given $f = \bigsqcup_{n \in \Nat} f_n$ in $[V_0,A]$ with all $f_n^@(t)$ defined, then $f^@(t)$ is defined.
		
	\end{enumerate}
	Step (i) is easy, using the fact that since $V_0$ is finite, $f : V_0 \to \bigcup a_s[A_s]$ factorizes through one of the subset $a_s[A_s]$, as $f = a_s \comp \ol{f}$.
	For the interpretaion $\ol{f} : V_0 \to A_s$ we know that $\ol{f}^@(t)$ is defined.
	Then we apply Lemma~\ref{L:help} to $h = a_s$.
	Step (ii) is more involved since it works with double induction: for $t_k$ and $f_n$.
\end{proof}

\begin{corollary}
	\label{C:strong}
	The monad $\Tmon_\Vvar$ on $\CPO$ is strongly finitary for every variety $\Vvar$ of continuous algebras.
\end{corollary}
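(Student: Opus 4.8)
The plan is to deduce strong finitarity from Corollary~\ref{C:density}, which says that an endofunctor of $\CPO$ is strongly finitary exactly when it preserves directed colimits and reflexive (surjective) coinserters; by Remark~\ref{R:dense-c} I may use the density presentation of $K$ by directed colimits and \emph{reflexive surjective} coinserters. So it suffices to show that the underlying functor $T_\Vvar = U_\Vvar \comp F_\Vvar$ preserves these two classes of weighted colimits. I would start by splitting $T_\Vvar$ along the adjunction $F_\Vvar \dashv U_\Vvar$ of Proposition~\ref{P:free}. Being a left adjoint, $F_\Vvar$ preserves all weighted colimits; in particular it sends a directed diagram (resp.\ a reflexive coinserter) in $\CPO$ to a colimit of the same shape in $\Vvar$, reflexivity being preserved by functoriality. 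Hence the whole statement reduces to proving that $U_\Vvar$ preserves directed colimits and reflexive surjective coinserters.

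The directed-colimit case is immediate from the results already proved: Theorem~\ref{T:dir-var} shows that $\Vvar$ is closed under directed colimits inside $\Sigma\text{-}\CPO$, and Proposition~\ref{P:create} shows that $U_\Sigma : \Sigma\text{-}\CPO \to \CPO$ creates them. Consequently a directed colimit in $\Vvar$ is computed on underlying cpos, which is exactly preservation by $U_\Vvar$. For a reflexive coinserter $c : B \to C$ of a pair in $\Vvar$ with $c$ surjective, I would argue analogously: Proposition~\ref{P:creates2} shows that the reflexive coinserter in $\Sigma\text{-}\CPO$ is created from $\CPO$, and since $c$ is surjective it exhibits $C$ as a homomorphic image of $B \in \Vvar$, so $C \in \Vvar$ by closure under the operator~H (Lemma~\ref{L:HSP}). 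Thus the coinserter, computed on underlying cpos, already lies in $\Vvar$ and is the coinserter there, i.e.\ $U_\Vvar$ preserves it.

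The step I expect to be the main obstacle is matching colimit \emph{types} across $F_\Vvar$. To conclude that $T_\Vvar$ preserves a reflexive surjective coinserter $\kappa$ of $\CPO$, I need $F_\Vvar\kappa$ to again be a \emph{reflexive surjective} coinserter in $\Vvar$, so that the preservation by $U_\Vvar$ just established can be applied. The coinserter property and reflexivity transfer automatically, but surjectivity of $F_\Vvar c$ must be verified separately. This is where the argument has to be handled with care: I would use the factorization of $F_\Vvar$ as the free $\Sigma$-algebra functor $F_\Sigma$ followed by the reflector $R : \Sigma\text{-}\CPO \to \Vvar$ of Proposition~\ref{P:free}, and check that both preserve surjective homomorphisms (on the free part, a surjection on generators induces a surjection on terms; the reflector preserves epimorphisms as a left adjoint). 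Granting this, $T_\Vvar = U_\Vvar \comp F_\Vvar$ preserves directed colimits and reflexive surjective coinserters, and Corollary~\ref{C:density} yields that $\Tmon_\Vvar$ is strongly finitary.
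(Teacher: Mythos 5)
Your proposal follows essentially the same route as the paper's proof: it reduces strong finitarity to Corollary~\ref{C:density}, factors $T_\Vvar = U_\Vvar \comp F_\Vvar$, handles $U_\Vvar$ on directed colimits via Theorem~\ref{T:dir-var} together with Proposition~\ref{P:create} and on reflexive surjective coinserters via Proposition~\ref{P:creates2} together with closure under homomorphic images (Lemma~\ref{L:HSP}), and then uses that the left adjoint $F_\Vvar$ preserves weighted colimits. The surjectivity-transfer issue you single out is a genuine subtlety that the paper's sketch passes over silently; only note that your appeal to ``the reflector preserves epimorphisms as a left adjoint'' yields merely an epimorphism in $\Vvar$, which need not be surjective, so surjectivity of the reflected map should be checked directly rather than deduced from epi-preservation.
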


\begin{proof}
	We know that $U_\Sigma : \Sigma\text{-}\CPO \to \CPO$ creates directed colimits and reflexive coinserters (Propositions~\ref{P:create} and~\ref{P:creates2}).
	
	Given a variety $\Vvar$, the embedding $E: \Vvar \hookrightarrow \Sigma\text{-}\CPO$ preserves directed colimits (Theorem~\ref{T:dir-var}) and reflexive surjective homomorphisms: indeed, $\Vvar$ is closed under homomorphic images (Lemma~\ref{L:HSP}).
	Consequently, the forgetful functor $U_\Vvar = U_\Sigma \comp E$ preserves directed colimits and reflexive surjective coinserters.
	Its left adjoint $F_\Vvar$ preserves weighted colimits.
	Thus $T_\Vvar = U_\Vvar \comp F_\Vvar$ preserves directed colimits and reflexive coinserters.
	This finishes the proof by Corollary~\ref{C:density}.
\end{proof}

\begin{construction}
	\label{C:var}
	Analogously to Construction~\ref{C:eq-q}, to every strongly finitary monad $\Tmon = (T,\mu,\eta)$ on $\CPO$ we assign a variety $\Vvar_\Tmon$ of algebras of the signature $\Sigma_n = |TV_n|$ presented by equations as follows:
	\begin{enumerate}[left= 5pt]
		
		\item $\sigma = \bigvee_{k \in \Nat} \sigma_k$ for every $\omega$-chain $(\sigma_k)_{k < \omega}$ in the cpo $TV_n$ with $\sigma = \bigsqcup_{k \in \Nat} \sigma_k$;
		
		\item $k^*(\sigma) = \sigma(k(x_i))_{i<n}$ for all $\sigma \in \Sigma_n$ and all maps $k : V_n \to \Sigma_m$;
		
		\item $\eta_{V_n}(x_i) = x_i$ for all $i = 0,\dots,n-1$.
		
	\end{enumerate}
\end{construction}

\begin{lemma}
	Every algebra $\alpha : TA \to A$ in $\CPO^\Tmon$ defines an algebra in $\Vvar_\Tmon$ with $\sigma_A(a(x_i)) = a^*(\sigma)$ for all $\sigma \in \Sigma_n$ and $a : V_n \to A$.
	Moreover, every homomorphism in $\CPO^\Tmon$ is also a $\Sigma$-homomorphism between the corresponding algebras in $\Vvar_\Tmon$.
\end{lemma}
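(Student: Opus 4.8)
The plan is to mirror the proof of the quantitative analogue, Lemma~\ref{L:homo}, step by step, the only genuinely new ingredients being that the operations $\sigma_A$ must now be shown \emph{continuous} (rather than merely nonexpanding) and that the formal-join equations (1) of Construction~\ref{C:var} must be verified. Throughout I write $a^* = \alpha \comp Ta : TV_n \to A$ for the homomorphic extension of an interpretation $a : V_n \to A$; since $V_n$ is discrete, $A^n \cong \CPO(V_n,A) = [V_n,A]$, so every $n$-tuple in $A^n$ is exactly such a map $a$ and the formula $\sigma_A(a(x_i)) = a^*(\sigma)$ determines a well-defined function $\sigma_A : A^n \to A$.

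First I would establish continuity of $\sigma_A$ by factoring it as
\[
[V_n,A] \xrightarrow{\; T \;} [TV_n,TA] \xrightarrow{\;[TV_n,\alpha]\;} [TV_n,A] \xrightarrow{\;\ev_\sigma\;} A,
\]
where the first arrow is the local action $a \mapsto Ta$ of the functor $T$, the second is post-composition with $\alpha$, and the third is evaluation at $\sigma \in TV_n$. Each of these is continuous: the first because $T$ is locally continuous (an enriched endofunctor of $\CPO$), the second because composition in $\CPO$ is continuous, and the third because evaluation at a fixed element is continuous (joins in hom-cpos being computed pointwise). Hence $\sigma_A$ is continuous.

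Next I would check that $A$ satisfies the three families of equations of Construction~\ref{C:var}. Equations (3) and (2) go through exactly as in Lemma~\ref{L:homo}: equation (3) amounts to $\sigma_A$ being the $i$-th projection when $\sigma = \eta_{V_n}(x_i)$, which follows from naturality of $\eta$ together with the unit law $\alpha \comp \eta_A = \id_A$; equation (2) rests on the identity $a^* \comp k^* = (a^* \comp k)^*$, itself a consequence of naturality of $\mu$ and the associativity law $\alpha \comp \mu_A = \alpha \comp T\alpha$ of the algebra $\alpha$. For equation (1), fix an $\omega$-chain $(\sigma_k)$ in $TV_n$ with $\sigma = \bigsqcup_{k} \sigma_k$ and an interpretation $a : V_n \to A$. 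Because $a^*$ is continuous (indeed a morphism of $\CPO$), the elements $a^@(\sigma_k) = a^*(\sigma_k)$ form an $\omega$-chain in $A$, so $a^@(\bigvee_{k} \sigma_k)$ is defined and equals $\bigsqcup_{k} a^*(\sigma_k) = a^*(\bigsqcup_{k} \sigma_k) = a^*(\sigma) = a^@(\sigma)$. Thus both sides of (1) are defined and agree, and $A$ lies in $\Vvar_\Tmon$.

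Finally, for the homomorphism part, let $h : (A,\alpha) \to (B,\beta)$ be a morphism of $\CPO^\Tmon$, so that $h$ is continuous and $h \comp \alpha = \beta \comp Th$. I would verify $h \comp \sigma_A = \sigma_B \comp h^n$ by reducing it, as in Lemma~\ref{L:homo}, to the identity $h \comp a^* = (h \comp a)^*$ for every $a : V_n \to A$, which follows immediately from the algebra-homomorphism law and functoriality of $T$, since $h \comp a^* = h \comp \alpha \comp Ta = \beta \comp Th \comp Ta = \beta \comp T(h \comp a) = (h \comp a)^*$. I expect the only delicate point to be equation (1): one must ensure that the \emph{partial} interpretation $a^@$ is actually defined along the chain and at its formal join, and this is exactly what continuity of $a^*$ guarantees; everything else is the same monad-algebra bookkeeping as in the metric case.
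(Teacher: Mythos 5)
Your proposal is correct and follows exactly the route the paper intends: the paper's proof of this lemma is simply ``analogous to Lemma~\ref{L:homo}'', and your fleshing-out identifies precisely the two new ingredients --- continuity of $\sigma_A$ via local continuity of $T$ in place of nonexpansiveness, and definedness of the partial interpretation $a^@$ on the formal joins of Construction~\ref{C:var}(1) via continuity of $a^* = \alpha \comp Ta$ --- while equations (2), (3) and the homomorphism part reduce to the same identities $a^* \comp k^* = (a^* \comp k)^*$ and $h \comp a^* = (h \comp a)^*$ as in the metric case. No gaps.
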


The proof is analogous to that of Lemma~\ref{L:homo}.

\begin{theorem}
	\label{T:main-cpo}
	Every strongly finitary monad $\Tmon$ on $\CPO$ is the free-algebra monad of the variety $\Vvar_\Tmon$.
\end{theorem}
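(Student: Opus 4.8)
The plan is to follow the strategy of Theorem~\ref{T:mon-var}, replacing the role of precongruences by reflexive coinserters. Concretely, I would prove that for every cpo $M$ the $\Sigma$-algebra carried by $(TM,\mu_M)$ (via the Lemma preceding this theorem) is the free algebra of $\Vvar_\Tmon$ on $M$, with universal arrow $\eta_M$. Granting this, the theorem follows because the comparison functor $K_{\Vvar_\Tmon}\colon \Vvar_\Tmon \to \CPO^{\Tmon_{\Vvar_\Tmon}}$ is a concrete isomorphism, so $\Tmon$ and the free-algebra monad $\Tmon_{\Vvar_\Tmon}$ coincide.

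Both monads are strongly finitary: $\Tmon$ by hypothesis and $\Tmon_{\Vvar_\Tmon}$ by Corollary~\ref{C:strong}. Hence by Corollary~\ref{C:density} both preserve directed colimits and reflexive coinserters, and each is the left Kan extension of its restriction along $K\colon \Set_\fin \to \CPO$. Using the density presentation of $K$ (directed colimits and reflexive coinserters, Example~\ref{E:density}(b)), it therefore suffices to establish the freeness statement for the finite discrete cpos $M = V_n$; the four build-up steps of Example~\ref{E:density}(b) then propagate the resulting natural isomorphism of the two monads from $\Set_\fin$ to all of $\CPO$, exactly as the precongruence-and-directed-colimit argument did in the $\Met$ case.

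So fix $M = V_n$, an algebra $A$ in $\Vvar_\Tmon$, and an interpretation $f\colon V_n \to A$; I must produce a unique continuous $\Sigma$-homomorphism $\ol f \colon TV_n \to A$ with $f = \ol f \comp \eta_{V_n}$. I set $\ol f(\sigma) = \sigma_A(f(x_i))_{i<n}$ for $\sigma \in TV_n = \Sigma_n$. Then $f = \ol f \comp \eta_{V_n}$ follows from equation (3) of Construction~\ref{C:var} (so $\eta_{V_n}(x_i)$ names the $i$-th projection), and the $\Sigma$-homomorphism property follows from equation (2), the computation being identical to the one in the proof of Theorem~\ref{T:mon-var}. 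Uniqueness is also as there: a $\Sigma$-homomorphism satisfying $f = \ol f \comp \eta_{V_n}$ must send $\sigma = \sigma_{TV_n}(\eta_{V_n}(x_i))_{i<n}$ to $\sigma_A(f(x_i))_{i<n}$.

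The step I expect to be the main obstacle is continuity of $\ol f$, and this is precisely where the equations of type (1) in Construction~\ref{C:var} (and the nonclassical terms $\bigvee_{k}$) do the work that the $\eps$-equations did for nonexpansiveness in $\Met$. For monotonicity, given $\sigma \sqleq \sigma'$ in $TV_n$ I pad it to the $\omega$-chain $\sigma \sqleq \sigma' \sqleq \sigma' \sqleq \cdots$, whose join is $\sigma'$; equation (1) then asserts that $A$ satisfies $\sigma' = \bigvee_k s_k$ with $s_0 = \sigma$ and $s_k = \sigma'$ for $k \ge 1$, and by Definition~\ref{D:sat} this requires $f^@$ to be defined on the right-hand join, i.e.\ the chain $\ol f(\sigma) \sqleq \ol f(\sigma') \sqleq \ol f(\sigma') \sqleq \cdots$ to be monotone in $A$, forcing $\ol f(\sigma) \sqleq \ol f(\sigma')$. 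For preservation of joins, given a genuine $\omega$-chain $(\sigma_k)$ in $TV_n$ with $\sigma = \bigsqcup_k \sigma_k$, equation (1) says $A$ satisfies $\sigma = \bigvee_k \sigma_k$, which by Definition~\ref{D:sat} gives exactly $\ol f(\sigma) = \bigsqcup_k \ol f(\sigma_k)$. Thus $\ol f$ is continuous, completing the finite discrete case and, by the density argument above, the proof.
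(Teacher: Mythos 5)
Your proposal is correct and follows essentially the same route as the paper: reduce to finite discrete cpos via strong finitarity and the density presentation by directed colimits and reflexive coinserters, then verify freeness on $V_n$ using equations (1)--(3) of Construction~\ref{C:var}, with the type-(1) equations yielding continuity of $\ol{f}$. Your explicit padded-chain argument for monotonicity is a detail the paper leaves implicit, but it is the same argument in substance.
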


\begin{proof}
	By Example~\ref{E:density} the functor $K: \Set_\fin \to \CPO$ has the density presentation consisting of directed colimits and surjective reflexive coinserters.
	Both the free-algebra monad $\Tmon'$ of $\Vvar_\Tmon$ and $\Tmon$ are strongly finitary (Corollary~\ref{C:strong}).
	Therefore, it is sufficient to prove for every finite discrete cpo $X$ that the $\Sigma$-algeba of $\Vvar_\Tmon$ corresponding to the free algebra $(TX,\mu_X)$ of $\CPO^\Tmon$ is a free algebra on $X$ in $\Vvar_\Tmon$.
	In other words, $\Tmon$ and $\Tmon'$ have the same free algebras on objects of $\Set_\fin$.
	Since they both preserve the colimits of the density presentation of $K : \Set_\fin \hookrightarrow \CPO$, it follows that for \emph{every} cpo the free algebras of $\Tmon$ and $\Tmon'$ are the same. Thus the monads $\Tmon$ and $\Tmon'$ are isomorphic, which proves the theorem.
	We can assume $X = V_n$ for some $n \in \Nat$. 
	
	The verification that for every algebra $A$ in $\Vvar_\Tmon$ and every interpretation $f : V_n \to A$, there is a unique $\Sigma$-homomorphism $\ol{f}: T_\Sigma V_n \to A$ with $f = \ol{f} \comp \eta_{V_n}$ is analogous to that in Theorem~\ref{T:mon-var}: In the 'existence' part, $\ol{f}$ is defined by the same formula.
	It is continuous because, given an $\omega$-join $\sigma = \bigsqcup_{k \in \Nat} \sigma_k$ in $TV$, the algebra $A$ satisfies $\sigma = \bigvee_{k \in \Nat} \sigma_k$, thus
	\[\ol{f}(\sigma) = \sigma_A(f(x_i)) = \bigsqcup_{k \in \Nat} (\sigma_k)_A(f(x_i)) = \bigsqcup_{k \in \Nat} f(\sigma_k). \]
	The 'uniqueness' part is identical.
\end{proof}

\begin{corollary}
	Varieties of continuous algebras correspond bijectively, up to isomorphism, to strongly finitary monads on $\CPO$.
\end{corollary}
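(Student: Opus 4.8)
The plan is to splice together the two principal results of this section. Corollary~\ref{C:strong} supplies one half of the correspondence---every variety $\Vvar$ of continuous algebras has a strongly finitary free-algebra monad $\Tmon_\Vvar$ on $\CPO$---while Theorem~\ref{T:main-cpo} supplies the other---every strongly finitary monad $\Tmon$ is the free-algebra monad of the variety $\Vvar_\Tmon$ of Construction~\ref{C:var}, that is, $\Tmon_{\Vvar_\Tmon} \cong \Tmon$. It remains only to check that the two assignments $\Vvar \mapsto \Tmon_\Vvar$ and $\Tmon \mapsto \Vvar_\Tmon$ are mutually inverse once passed to isomorphism classes.

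First I would record that $\Vvar \mapsto \Tmon_\Vvar$ genuinely lands among strongly finitary monads, by Corollary~\ref{C:strong}, so the assignment is well defined on the classes in question. Surjectivity (up to isomorphism) is then immediate from Theorem~\ref{T:main-cpo}: any strongly finitary monad $\Tmon$ coincides, up to isomorphism of monads, with the free-algebra monad $\Tmon_{\Vvar_\Tmon}$ of the variety produced by Construction~\ref{C:var}.

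For injectivity up to isomorphism I would invoke the concrete isomorphism $K_\Vvar : \Vvar \to \CPO^{\Tmon_\Vvar}$ established earlier in this section (the comparison functor of every variety is a concrete isomorphism). Suppose two varieties $\Vvar$ and $\Wvar$ have isomorphic free-algebra monads, $\Tmon_\Vvar \cong \Tmon_\Wvar$. An isomorphism of monads induces a concrete isomorphism of the associated Eilenberg--Moore categories, $\CPO^{\Tmon_\Vvar} \cong \CPO^{\Tmon_\Wvar}$; composing it with the two comparison isomorphisms yields a concrete isomorphism $\Vvar \cong \Wvar$. Hence the correspondence is bijective on isomorphism classes, which is exactly the assertion of the corollary.

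The bulk of the difficulty has already been discharged upstream: Corollary~\ref{C:strong} rests on the creation results (Propositions~\ref{P:create} and~\ref{P:creates2}) together with closure under directed colimits (Theorem~\ref{T:dir-var}), and Theorem~\ref{T:main-cpo} rests on the density presentation of $K : \Set_\fin \to \CPO$ by directed colimits and surjective reflexive coinserters (Example~\ref{E:density}). The only point requiring fresh attention---and it is a routine one to handle carefully rather than a genuine obstacle---is the standard observation that isomorphic monads on $\CPO$ induce concretely isomorphic categories of algebras, which is precisely what secures injectivity; everything else is an immediate assembly of the cited results.
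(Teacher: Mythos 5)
Your proposal is correct and follows essentially the same route as the paper: it assembles Corollary~\ref{C:strong}, Theorem~\ref{T:main-cpo}, and the concrete invertibility of the comparison functor $K_\Vvar$, which are exactly the ingredients the paper uses (its proof is the analogue of that of Theorem~\ref{T:main-c}, which additionally upgrades the object-level bijection to an equivalence $\Var(\CPO)\simeq\Mon_\sf(\CPO)^\op$ by matching concrete functors with monad morphisms). The only point you leave implicit is that concretely isomorphic varieties have isomorphic free-algebra monads (so the assignment $\Vvar\mapsto\Tmon_\Vvar$ descends to isomorphism classes), but this is routine from uniqueness of adjoints.
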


The proof is analogous to that of Theorem~\ref{T:main-c}.

\section{Varieties of $\Delta$-Continuous Algebras}

We now turn from $\CPO$ to $\DCPO$.

\begin{definition}
	A \emph{$\Delta$-continuous algebra} is a dcpo endowed with continuous operations.
	We denote by $\Sigma\text{-}\DCPO$ the category of $\Delta$-continuous algebras and continuous (directed-joins preserving) homomorphisms.
\end{definition}

We assume again that a signature $\Sigma$ is given, and a countable set $V$ of variables is chosen.

\begin{example}
	A free $\Delta$-continuous algebra on a dcpo $M$ is the algebra $T_\Sigma M$ of terms on variables from $|M|$, see Example~\ref{E:free-c}.
	Indeed, the underlying poset is a coproduct of copies of $|M|^n$ (Example~\ref{E:ex}), thus $T_\Sigma M$ is a dcpo.
	We again use the notation $f^\sharp : T_\Sigma M \to A$ for the homomorphism extending $f : M \to A$.
\end{example}

We are ready to define equations and varieties of $\Delta$-continuous algebras.
The extended terms here have the form $\bigvee_{k < \alpha} t_k$ for arbitrary ordinals $\alpha$.
We namely use the fact that a poset is a dcpo iff it has joins of ordinal-indexed chains, and a map preserves directed joins iff it preserves joins of ordinal-indexed chains (\cite{adamek+rosicky}, Corollary~1.7).
Recall that an ordinal is the linearly ordered set of all smaller ordinals, $\alpha = \{ k \in \Ord \mid k < \alpha \}$.

\begin{definition}
	\phantom{phantom}
	\begin{enumerate}
		
		\item We define \emph{$\Delta$-extended terms} as the smallest set $\T_\Sigma^\Delta V$ containing $T_\Sigma V$ and such that for each ordinal $\alpha$ and every collection $t_k$ ($k < \alpha$) of $\Delta$-extended terms containing only finitely many variables we get a $\Delta$-extended term $\bigvee_{k < \alpha} t_k$.
		
		\item For every $\Delta$-continuous algebra $A$ and every interpretation $f : V \to A$ we denote by $f^@ : \T_\Sigma^\Delta V \rightharpoonup A$ the partial map extending $f^\sharp$ which is defined in $t = \bigvee_{i < \alpha} t_i$ iff each $f^@(t_i)$ is defined and fulfils $f^@(t_i) \sqleq f^@(t_j)$ for all $i < j < \alpha$; then $f^@(t) = \bigsqcup_{i < \alpha} f^@(t_i)$.
		
		\item An \emph{equation} is a pair of extended terms; we write again $t = t'$.
		An algebra $A$ \emph{satisfies} it iff for every interpretation $f : V \to A$ both $f^@(t)$ and $f^@(t')$ are defined, and are equal.
		
	\end{enumerate}
\end{definition}

We thus obtain \emph{varieties} of $\Delta$-continuous algebras as the full subcategories of $\Sigma\text{-}\DCPO$ presented by a set of equations between $\Delta$-extended terms.
Please note that although $\T_\Sigma^\Delta V$ is a proper class, every variety is presented by a \emph{set} (not a proper class) of equations.

\begin{proposition}
	Every variety $\Vvar$ of $\Delta$-continuous algebras has free algebras, and for the ensuing monad $\Tmon_\Vvar$ it is concretely isomorphic to $\DCPO^{\Tmon_\Vvar}$.
\end{proposition}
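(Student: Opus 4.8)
The plan is to mirror, step for step, the development that produced free algebras and the comparison isomorphism for $\CPO$ (Proposition~\ref{P:free} together with the proposition following it), adjusting only at the points where the transfinite nature of $\Delta$-extended terms forces a change. For free algebras I would show that $\Vvar$ is a reflective subcategory of $\Sigma\text{-}\DCPO$ and then compose the reflection with the free-algebra adjunction of $\Sigma\text{-}\DCPO$, whose free algebras are the term algebras $T_\Sigma M$ described in the preceding Example.

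Concretely, first observe that $\Sigma\text{-}\DCPO$ is complete — products of dcpos are formed coordinate-wise and carry coordinate-wise operations — and wellpowered as a concrete category over $\DCPO$. Next, equip $\Sigma\text{-}\DCPO$ with the factorization system $(\EE,\MM)$ lifted from $\DCPO$, where $\EE$ consists of the homomorphisms with dense image (a subset being dense iff the only sub-dcpo containing it is the whole dcpo) and $\MM$ of the embeddings of closed subalgebras; exactly as in Proposition~\ref{P:free}, the unique algebra structure on the image factor makes both factors homomorphisms. Then establish the HSP-closure: every variety of $\Delta$-continuous algebras is closed under products and closed subalgebras, the $\DCPO$-analogue of Lemma~\ref{L:HSP}. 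The argument of~\cite{anr} carries over, the only new ingredient being that satisfaction of an equation $t = t'$ is now tested against ordinal-indexed chains, using that dcpos are exactly the posets with joins of ordinal chains (\cite{adamek+rosicky}, Corollary~1.7). With these closure properties the reflection theorem (\cite{ahs}, Theorem~16.8) applies, so the inclusion $E : \Vvar \hookrightarrow \Sigma\text{-}\DCPO$ has a left adjoint; since $U_\Vvar = U_\Sigma \comp E$ and $U_\Sigma$ has the left adjoint $M \mapsto T_\Sigma M$, the functor $U_\Vvar$ is a right adjoint, yielding the free-algebra monad $\Tmon_\Vvar$.

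For the concrete isomorphism $\Vvar \cong \DCPO^{\Tmon_\Vvar}$ I would invoke the enriched Beck monadicity theorem exactly as in the $\CPO$ and $\Met$ cases (Proposition~\ref{P:beck}, following \cite{maclane:cwm}, Theorem~VI.8.1): $U_\Vvar$ has a left adjoint, reflects isomorphisms (a homomorphism whose underlying map is a $\DCPO$-isomorphism is an isomorphism of algebras), and creates coequalizers of $U_\Vvar$-split pairs, since such coequalizers are computed on underlying dcpos and the operations transport along the splitting. Hence the comparison functor $K_\Vvar$ is a concrete isomorphism.

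The main obstacle is confined to the first part and is the transfinite bookkeeping. Whereas in $\CPO$ the defining joins are $\omega$-indexed and density is governed by $\omega$-chains, here both the satisfaction relation and the closure arguments range over all ordinals. In particular, proving that a closed subalgebra of an algebra in $\Vvar$ again satisfies every defining equation requires that definability of a $\Delta$-extended term be inherited by closed sub-dcpos, which hinges on closed subalgebras being closed under joins of arbitrary ordinal-indexed chains — the transfinite analogue of the $\omega$-join step in the $\CPO$ proof. One must also respect the set-theoretic point already flagged in the text: although $\T_\Sigma^\Delta V$ is a proper class, each variety is presented by a \emph{set} of equations, so the wellpoweredness hypothesis of the reflection theorem remains available and the whole construction stays within legitimate bounds.
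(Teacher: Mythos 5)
Your proposal is correct and follows essentially the same route as the paper, which proves the existence of free algebras exactly as in Proposition~\ref{P:free} with density reinterpreted to mean that the only sub-dcpo containing the set is the whole dcpo, and obtains the concrete isomorphism as in Proposition~\ref{P:beck} by analogy with the classical case. Your additional detail on the transfinite bookkeeping and the set-theoretic size issue fills in exactly the adjustments the paper leaves implicit.
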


\begin{proof}
	The existence of free algebras is verified as in Proposition~\ref{P:free}.
	We just need to understand density of a set $X \subseteq C$ for a dcpo $C$ to mean that the only sub-dcpo containing $X$ is all of $C$.
	The rest is, like Proposition~\ref{P:beck}, analogous to the classical case.
\end{proof}

\begin{theorem}
	The monad $\Tmon_\Vvar$ on $\DCPO$ is strongly finitary for every variety of $\Delta$-continuous algebras.
\end{theorem}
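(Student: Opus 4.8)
The plan is to follow verbatim the strategy of Corollary~\ref{C:strong}, replacing $\CPO$ by $\DCPO$ throughout and invoking the $\DCPO$ versions of the ingredients used there. First I would record the two creation results: the forgetful functor $U_\Sigma : \Sigma\text{-}\DCPO \to \DCPO$ creates directed colimits and creates reflexive coinserters. Both are proved exactly as Propositions~\ref{P:create} and~\ref{P:creates2}, the only inputs being that $\DCPO$ is cartesian closed (Example~\ref{E:smc}), so that directed colimits commute with finite products by Theorem~\ref{T:ccc}, and that reflexive coinserters commute with finite products by Proposition~\ref{P:ccc}. These commutation properties let one transport each $n$-ary operation $\sigma_{C_i}$ along the colimit cocone to a unique continuous $\sigma_C$, yielding the required algebra structure on the colimit and making the cocone maps homomorphisms.

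Next I would establish the $\DCPO$ analogue of Theorem~\ref{T:dir-var}: every variety $\Vvar$ of $\Delta$-continuous algebras is closed under directed colimits in $\Sigma\text{-}\DCPO$. As before, it suffices, by the definability reformulation of satisfaction (Remark~\ref{R:defi} carries over unchanged), to show that a $\Delta$-extended term $t$ definable in every member $A_s$ of a directed diagram with colimit $a_s : A_s \to A$ is definable in $A$. Here I would use that the union $X = \bigcup_s a_s[A_s]$ is dense in $A$ (the dcpo analogue of Example~\ref{E:dense}) and that $[V_0,X]$ is dense in $[V_0,A]$ for the finite variable set $V_0$ of $t$ (the dcpo analogue of Lemma~\ref{L:dense}). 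Definability of $f^@(t)$ is then proved by structural induction on $t$: the base case of classical terms is immediate, and the inductive step treats $t = \bigvee_{k < \alpha} t_k$ by a second induction on interpretations $f$ ranging over $[V_0,A]$, first handling $f$ with image in $X$ by factoring $f$ through some $a_s$ as $f = a_s \comp \ol{f}$ and applying the $\DCPO$ form of Lemma~\ref{L:help}, then closing under the ordinal-indexed joins that generate $[V_0,A]$ from $[V_0,X]$.

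The main obstacle is precisely this nested induction. Unlike the $\CPO$ case, the outer joins $\bigvee_{k<\alpha}$ are indexed by arbitrary ordinals, and so is the join structure of $[V_0,A]$ along which one must close; hence both inductions become transfinite, and at limit stages one must check that the partial joins assembled so far still form an ordinal-indexed chain whose join exists in $A$ and agrees with the intended value of $f^@(t)$. Care is needed to keep the two transfinite recursions compatible: in effect one argues by induction on the ordinal rank of $t$ and, within each rank, by induction on the construction of the interpretation $f$.

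Finally I would assemble the conclusion exactly as in Corollary~\ref{C:strong}. The embedding $E : \Vvar \hookrightarrow \Sigma\text{-}\DCPO$ preserves directed colimits (by the closure just proved) and reflexive surjective homomorphisms (since $\Vvar$ is closed under homomorphic images, the $\DCPO$ analogue of Lemma~\ref{L:HSP}, whose proof transfers from~\cite{anr} without change). Hence the forgetful functor $U_\Vvar = U_\Sigma \comp E$ preserves directed colimits and reflexive surjective coinserters. Its left adjoint $F_\Vvar$ preserves all weighted colimits, so $T_\Vvar = U_\Vvar \comp F_\Vvar$ preserves directed colimits and reflexive coinserters. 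By Corollary~\ref{C:density}, which holds in $\DCPO$ by Remark~\ref{R:dense-c}, this means that $T_\Vvar$ is strongly finitary.
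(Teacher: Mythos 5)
Your proposal is correct and follows exactly the route the paper intends: the paper's own proof is simply the remark that the argument is analogous to that of Corollary~\ref{C:strong}, and you have spelled out that analogy faithfully, including the needed $\DCPO$ analogues of Propositions~\ref{P:create} and~\ref{P:creates2}, Theorem~\ref{T:dir-var}, Lemma~\ref{L:HSP}, and the density presentation from Remark~\ref{R:dense-c} feeding into Corollary~\ref{C:density}. Your additional attention to the transfinite (ordinal-indexed) form of the nested induction in the closure-under-directed-colimits step is exactly the point where the $\DCPO$ case requires extra care, and you have identified it correctly.
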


The proof is analogous to that of Corollary~\ref{C:strong}.

\begin{construction}
	To every strongly finitary monad $\Tmon$ on $\DCPO$ we assign a variety of $\Delta$-continuous algebras of signature $\Sigma_n = |TV_n|$.
	It is presented by the equations as in Construction~\ref{C:var}, except that in Item 1) we choose arbitrary ordinals
	\[
	\alpha \leq \card |TV|
	\]
	and then form the following equations:
	\[
	\sigma = \bigvee_{k < \alpha} \sigma_k
	\]
	for every $\alpha$-chain $(\sigma_k)_{k < \alpha}$ in the dcpo $TV_n$ with $\sigma = \bigsqcup_{k < \alpha} \sigma_k$.
	Observe that Items 1)-3) yield only a \emph{set} of equations.
\end{construction}

\begin{theorem}
	Every strongly finitary monad on $\DCPO$ is the free-algebra monad of the above variety.
\end{theorem}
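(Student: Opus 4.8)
The plan is to follow the proof of Theorem~\ref{T:main-cpo} for $\CPO$, with well-ordered (ordinal-indexed) joins playing the role of the $\omega$-joins. By Remark~\ref{R:dense-c} the functor $K : \Set_\fin \to \DCPO$ has a density presentation consisting of directed colimits and reflexive (surjective) coinserters, and by Corollary~\ref{C:density} strong finitarity on $\DCPO$ is exactly preservation of these. Let $\Tmon'$ be the free-algebra monad of the variety $\Vvar_\Tmon$ constructed above; it is strongly finitary because the free-algebra monad of every variety of $\Delta$-continuous algebras is, as shown above, and $\Tmon$ is strongly finitary by hypothesis. Hence both monads preserve all colimits in the density presentation, so it suffices to prove that they have the same free algebras on the objects of $\Set_\fin$; the density presentation then propagates this coincidence to every dcpo, giving $\Tmon \cong \Tmon'$. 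Thus I must show that for each finite discrete dcpo, which I may take to be $V_n$, the $\Sigma$-algebra carried by $(TV_n,\mu_{V_n})$ is free on $V_n$ in $\Vvar_\Tmon$ with respect to $\eta_{V_n}$.

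For the universal property I would argue exactly as in the existence/uniqueness split of Theorem~\ref{T:mon-var}. Given an algebra $A$ in $\Vvar_\Tmon$ and an interpretation $f : V_n \to A$, put $\ol{f}(\sigma) = \sigma_A(f(x_i))_{i<n}$ for $\sigma \in TV_n$. The identity $f = \ol{f}\comp\eta_{V_n}$ is forced by equation 3), and $\ol{f}$ is a $\Sigma$-homomorphism by equation 2) together with $a^*\comp k^* = (a^*\comp k)^*$, precisely as in the $\CPO$ case. Uniqueness is verbatim: any $\Sigma$-homomorphism extending $f$ must send $\sigma$ to $\sigma_A(f(x_i))$, because in $(TV_n,\mu_{V_n})$ the operation $\sigma$ sends the generators $\eta_{V_n}(x_i)$ to $\sigma$ itself.

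The only genuinely new ingredient---and the step I expect to demand the most care---is continuity of $\ol{f}$, i.e.\ preservation of directed joins. By \cite{adamek+rosicky}, Corollary~1.7, it is enough to check preservation of joins of well-ordered chains. For such a chain $(\sigma_k)_{k<\alpha}$ in $TV_n$ with join $\sigma = \bigsqcup_{k<\alpha}\sigma_k$, the algebra $A$ satisfies the corresponding instance of equation 1), which for the interpretation $f$ reads
\[
\sigma_A(f(x_i)) = \bigsqcup_{k<\alpha}(\sigma_k)_A(f(x_i)), \qquad\text{i.e.}\qquad \ol{f}(\sigma) = \bigsqcup_{k<\alpha}\ol{f}(\sigma_k).
\]
The obstacle is then purely ordinal bookkeeping: I must check that the equations actually imposed---those with $\alpha \le \card|TV|$---already force preservation of every directed join in $TV_n$. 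This follows by a cofinality argument. Any well-ordered chain in $TV_n$ has the same join as a cofinal sub-chain whose order type equals the cofinality, a regular cardinal bounded by $\card|TV_n| \le \card|TV|$; the instance of equation 1) attached to that sub-chain lies among the imposed equations and yields the displayed equality for the original join. With continuity of $\ol{f}$ established, the existence part, and hence the theorem, is complete.
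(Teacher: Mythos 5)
Your proof is correct and takes essentially the route the paper intends (the paper itself only remarks that the proof is analogous to the earlier cases): reduce via the density presentation of $K : \Set_\fin \to \DCPO$ to finite discrete dcpos, rerun the existence/uniqueness argument of Theorem~\ref{T:mon-var}, and check continuity of $\ol{f}$ on ordinal-indexed chains using equations 1). The one point worth making explicit in your cofinality step is that a well-ordered chain in $TV_n$ whose cofinality exceeds $\card |TV_n|$ is eventually constant (a weakly increasing map from a regular cardinal $\kappa > \card |TV_n|$ has a value with preimage of size $\kappa$, hence cofinal), so its join is attained and trivially preserved; in the remaining case the cofinal subchain has order type at most $\card |TV|$ and the imposed instance of equation 1) applies, exactly as you say.
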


The proof is analogous to that of Theorem~\ref{T:mon-var}.

\begin{corollary}
	Varieties of $\Delta$-continuous algebras correspond bijectively, up to isomorphism, to strongly finitary monads on $\DCPO$.
\end{corollary}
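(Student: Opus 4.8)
The plan is to read off the bijection from the two preceding theorems, following the pattern by which Theorem~\ref{T:main-c} is obtained over $\Met$. The correspondence pairs the assignment $\Vvar \mapsto \Tmon_\Vvar$, sending a variety of $\Delta$-continuous algebras to its free-algebra monad, with the assignment $\Tmon \mapsto \Vvar_\Tmon$ of the preceding Construction. That the former lands among strongly finitary monads is the content of the $\Delta$-analogue of Corollary~\ref{C:strong} stated above, and that every strongly finitary monad is hit is the content of the $\Delta$-analogue of Theorem~\ref{T:mon-var}, the identifying theorem above, which exhibits $\Tmon$ as the free-algebra monad of $\Vvar_\Tmon$.

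First I would confirm that both assignments respect isomorphism. A concrete isomorphism $\Vvar \cong \Vvar'$ commutes with the forgetful functors into $\DCPO$, hence carries the left adjoint of one to that of the other and so yields $\Tmon_\Vvar \cong \Tmon_{\Vvar'}$; thus the first assignment is well defined on isomorphism classes. For the second, an isomorphism $\Tmon \cong \Tmon'$ of monads induces a concrete isomorphism $\DCPO^\Tmon \cong \DCPO^{\Tmon'}$ of Eilenberg--Moore categories, and since each variety is concretely isomorphic to the category of algebras for its monad (the foregoing proposition), this transports to $\Vvar_\Tmon \cong \Vvar_{\Tmon'}$.

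Next I would check that the two passages are mutually inverse up to isomorphism. Beginning with a strongly finitary monad $\Tmon$, the identifying theorem gives $\Tmon_{\Vvar_\Tmon} \cong \Tmon$ directly. Beginning with a variety $\Vvar$, the foregoing proposition gives $\Vvar \cong \DCPO^{\Tmon_\Vvar}$; applying that proposition again to $\Vvar_{\Tmon_\Vvar}$ and invoking $\Tmon_{\Vvar_{\Tmon_\Vvar}} \cong \Tmon_\Vvar$ (once more the identifying theorem) yields $\Vvar_{\Tmon_\Vvar} \cong \DCPO^{\Tmon_\Vvar} \cong \Vvar$. Together these establish the claimed bijection between isomorphism classes.

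The substantive work having already been carried out in the preceding two theorems---in particular the $\Delta$-analogue of Theorem~\ref{T:mon-var}, whose proof must cope with joins of arbitrary ordinal-indexed chains rather than merely $\omega$-chains---the corollary itself presents no genuine obstacle. The one point I would be careful about is the reading of ``up to isomorphism'' as \emph{concrete} isomorphism of categories over $\DCPO$, so that the free-algebra monad is an honest isomorphism invariant and the Eilenberg--Moore comparison may be transported along monad isomorphisms; with this understood, the inversion above is forced. As in Theorem~\ref{T:main-c}, one could further upgrade the bijection to an equivalence of $\Var(\DCPO)^\op$ with $\Mon_\sf(\DCPO)$ by tracking concrete functors and the monad morphisms they induce, but this refinement is not needed for the stated correspondence.
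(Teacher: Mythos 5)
Your argument is correct and follows essentially the same route as the paper, which derives this corollary from the two preceding theorems exactly as in the $\Met$ case (Theorem~\ref{T:main-c}): the free-algebra monad of a variety is strongly finitary, every strongly finitary monad arises as $\Tmon_{\Vvar_\Tmon}$, and the concrete isomorphism $\Vvar \cong \DCPO^{\Tmon_\Vvar}$ closes the loop. Your explicit verification that the two assignments are inverse on isomorphism classes, and your remark that the statement upgrades to an equivalence $\Var(\DCPO)^\op \simeq \Mon_\sf(\DCPO)$, match the paper's intent precisely.
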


\section{Conclusion and Open Problems}

Varieties (aka $1$-basic varieties) of quantitative algebras of Mardare et al.~\cite{mpp16,mpp17} correspond bijectively to strongly finitary monads on the category $\Met$ of metric spaces.
This is the main result of our paper.
It is in surprising contrast to the fact that $\omega$-varieties in op.~cit.\ (where distance restrictions on finitely many variables in equations are considered) do not even yield finitary monads in general, as shown in~\cite{adamek:varieties-quantitative-algebras}.
For varieties of complete quantitative algebras the same result holds: they correspond bijectively to strongly finitary monads on the category $\CMet$ of complete metric spaces.
This relates the quantitative algebraic reasoning of Mardare et al.\ closely to the classical equational reasoning of universal algebra where varieties are known to correspond to finitary monads on $\Set$~\cite{maclane:cwm}.

\begin{openproblem}
	Characterize monads on $\Met$ or $\CMet$ corresponding to $\omega$-varieties of quantitative algebras.
\end{openproblem}

In~\cite{adamek:varieties-quantitative-algebras} a partial answer has been given: when moving from $\Met$ to its full subcategory $\UMet$ on all ultrametric spaces, then enriched monads on $\UMet$ corresponding to $\omega$-varieties of quantitative algebras in $\UMet$ are characterized.

Analogously to the famous Birkhoff Variety Theorem in classical algebra, varieties of quantitative algebras can, as proved in~\cite{mu}, be characterized as precisely the HSP-classes: closed in $\Sigma\text{-}\Met$ under homomorphic images, subalgebras, and products.
However, in $\CMet$ that proof does not seem to work.

\begin{openproblem}
	Does an analogy of the Birkhoff Variety Theorem hold for varieties of complete quantitative algebras?
\end{openproblem}

We have also presented a parallel theory of varieties of continuous algebras.
Here we worked in the category $\CPO$ of $\omega$-cpos (or $\DCPO$ of dcpos), and proved that varieties correspond bijectively to strongly finitary monads on $\CPO$ (or $\DCPO$).
Although the result sounds the same as that for $\Met$, the proof is substantially different.
It relies on $\CPO$ and $\DCPO$ being cartesian closed.

\begin{openproblem}
	Does an analogy of the Birkhoff Variety Theorem hold for varieties of continuous algebras?
\end{openproblem}

In~\cite{anr} an affirmative answer is presented, but the extended terms used there are more general than in our paper.

Our work in $\CPO$ and $\DCPO$ is based on the surprising fact we have proved: in cartesian closed categories directed colimits commute with finite products.

\newpage

\end{document}